\newtheorem{theorem}{Theorem}[section]
\newtheorem{lemma}[theorem]{Lemma}
\newtheorem{example}[theorem]{Example}
\newtheorem{proposition}[theorem]{Proposition}
\newtheorem{problem}[theorem]{Problem}
\newtheorem{corollary}[theorem]{Corollary}
\newtheorem{remark}[theorem]{Remark}
\newcommand{\N}{\mathbb N}
\newcommand{\R}{\mathbb R}
\newcommand{\ve}{\varepsilon}
\author{Jacek Marchwicki, Piotr Nowakowski and Franciszek Prus-Wi\'{s}niowski}
\address{Faculty of Mathematics and Computer Science
\\University of Warmia and Mazury in Olsztyn
\\S{\l}oneczna 54,
10-710 Olsztyn
\\Poland
\\ORCID: 0000-0003-3712-8289}
\email{jacek.marchwicki@uwm.edu.pl}
\address{Faculty of Mathematics and Computer Science
\\University of \L\'{o}d\'{z}
\\Banacha 22,
90-238 \L\'{o}d\'{z}
\\Poland
\\ORCID: 0000-0002-3655-4991}
\email{piotr.nowakowski@wmii.uni.lodz.pl}
\address{Instytut Matematyki\\
Uniwersytet Szczeci\'{n}ski\\
ul. Wielkopolska 15\\
PL-70-453 Szczecin\\
Poland
\\ ORCID 0000-0002-0275-6122}
\email{franciszek.prus-wisniowski@usz.edu.pl}
\thanks{This research was funded in part by National Science Centre, Poland, Grant number: 2022/06/X/ST1/00764.
%For the purpose of Open Access, the author has applied a CC-BY public copyright licence to any Author Accepted Manuscript (AAM) version arising from this submission.
}
\title{Algebraic sums of achievable sets involving Cantorvals}
\subjclass[2020]{Primary: 40A05; Secondary: 11K31}
\keywords{achievement set, set of subsums, absolutely convergent series, Cantor set, Cantorval, algebraic sum of sets}
\begin{document}

\begin{abstract}
In this paper we look at the topological type of algebraic sum of achievement sets. We show that there is a Cantorval such that the algebraic sum of its $k$ copies is still a Cantorval for any $k \in \N$. We also prove that for any $m,p \in (\N\setminus \{1\}) \cup \{\infty\}$, $p \geq m$, the algebraic sum of $k$ copies of a Cantor set can transit from a Cantor set to a Cantorval for $k=m$ and then to an interval for $k=p$. These two main results are based on a new characterization of sequences whose achievement sets are Cantorvals. We also define a new family of achievable Cantorvals which are not generated by multigeometric series. In the final section we discuss various decompositions of sequences related to the topological typology of achievement sets.

\end{abstract}

\maketitle
\section{Introduction}

Achievement sets, that is, the sets of all possible subsums of absolutely convergent series, have been investigated for over a century now. The historically first paper devoted to the achievement sets (\cite{Kakeya}) aimed at finding all possible topological types of such sets. The problem was solved only in 1988 by Guthrie and Nymann in \cite{GN88}. After the publication of their fundamental theorem, the investigation of achievement sets gained momentum. Among others, achievement sets served as a counterexample (see \cite{Sannami92} and \cite{PWT18}) to the Palis hypothesis \cite{Palis87} that the arithmetic sum (or difference) of two Cantor sets, both with Lebesgue measure zero, is either of Lebesgue measure zero or it contains an interval. Palis hypothesis came from the theory of dynamical systems and added much interest to the investigation of algebraic sums of Cantor sets which is a thriving field of research (see, for example, \cite{PY97}, \cite{E07}, \cite{Pourba18}, \cite{T19}). Achievement sets, due to their rigid nature, are relatively small family of compact sets in the real line as can be seen from the fact that Palis hypothesis is generically true for dynamically defined Cantor sets \cite{MY01}. The elementary nature of achievement sets is often misleading because many seemingly straightforward facts require quite complicated proofs or constructions. In our opinion, the most challenging open problem related to achievement sets is to find an easy to use characterization of the absolutely convergent series that generate Cantor sets.

Let $\sum a_n$ be an absolutely convergent series of real terms. 
\textsl{The achievement set} of the sequence $(a_n)_{n=1}^\infty$ is defined by
$$
A\ =\ A(a_n)\ :=\ \left\{x\in\mathbb R:\qquad \exists I\subset\mathbb N \quad x\,=\,\sum_{n\in I} a_n\,\right\}.
$$
The set of indices $I$ is called a \textsl{representation} of $x$. It does not have to be unique, although we will not dwell on it (unlike the papers \cite{cardfun} and \cite{GM23}). As it was known already to Kakeya, an achievement set is compact and if the sequence $(a_n)$ has infinitely many non-zero terms, then $A(a_n)$ must be perfect. Since we will be focused on the topological character of achievement set, we assume throughout the paper that the terms of the series $\sum a_n$ are nonnegative and nonincreasing, since $A(a_n)$ is merely a translation of $(A(|a_n|)$ by the number $\sum_{a_n<0}a_n$ and hence $A(a_n)$ and $A(|a_n|)$ are homeomorphic always. A set $S\subset\mathbb R$ is said to be \textsl{achievable} if there is a sequence $(a_n)$ such that $S=A(a_n)$ \cite[p. 519]{Jones}. Given any positive integer $k$, we define \textsl{$k$-initial subsums} as
$$
F_k\ =\ F_k(a_n)\ :=\ \left\{x\in\mathbb R: \qquad \exists I\subset\{1,2,\ldots,k\,\}\quad x\,=\,\sum_{n\in I}a_n\,\right\}.
$$
We will be using the increasing arrangement $F_k=(f_j^k)_{j=1}^{m_k}$. Always $k+1\le m_k\le 2^k$. More generally, if $S$ is any finite set  (or multiset) of real numbers, the set of all numbers that are sums of at least one subset (also possibly a multiset) of $S$, will be denoted by $\Sigma S$. In particular, $F_k=\Sigma\{a_1,a_2,\ldots,a_k\,\}$. For all $k\in\mathbb N$ the equality $A(a_n)\,=\, F_k+E_k$ holds where $E_k$ denotes the achievement set of the remainder sequence $(a_n)_{n=k+1}^\infty$.

Next, we define the \textsl{$k$-th iterate} of $A$ by
$$
I_k\ :=\ \bigcup_{f\in F_k}[f,\,f+r_k].
$$
Each $I_k$ is a \textsl{multi-interval set}, that is, the union of a finite family of closed and bounded intervals. In the classic case of $a_n=\tfrac2{3^n}$, when $A$ is the ternary Cantor set $C$, the set $I_k$ is exactly the set obtained in the $k$-th step of the standard geometric construction of $C$. Additionally, we will write $I_0:=[0,\,\sum a_n]$. 
Always $A(a_n)\,=\,\bigcap_nI_n$. 

We denote the family of all connectivity components of $I_{k-1}\setminus I_k$ by $\mathcal{G}_k$. Open intervals belonging to $\mathcal{G}_k$ are called \textsl{$A$-gaps of order $k$}. It is not difficult to see that the family $\mathcal{G}_k$ is nonempty if and only if $r_k<a_k$. Intervals of the form $(r_n,\,a_n)$ are gaps whenever $r_n<a_n$ (The First Gap Lemma, see \cite{BFPW1}) and will be called the \textsl{principal gaps} of $A$. We will say that an $A$-gap $G$ is \textsl{dominating} if all $A$-gaps lying to the left of $G$ are shorter than $G$. Nontrivial components of $A$ will be called \textsl{$A$-intervals}. It was already proven by Kakeya that $A(a_n)$ is a multi-interval set if and only if $a_n\le r_n$ for all sufficiently large $n$. $A(a_n)$ is a single interval if and only if $a_n\le r_n$ for all indices $n$. We say then that $(a_n)$ is an \textsl{interval-filling} sequence and this terminology comes not from Kakeya, but from \cite{DJK}. Kakeya observed also that if $a_n>r_n$ for all sufficiently large $n$ then $A(a_n)$ is a \textsl{Cantor set}, that is, a set in $\mathbb R$ homeomorphic to the classic Cantor ternary set or, equivalently, a nonempty bounded, perfect and nowhere dense set (see \cite[Thm. 3.3]{Foran}). He conjectured in \cite{Kakeya} that if the series $\sum a_n$ has infinitely many positive terms, then $A(a_n)$ is either a multi-interval set or a Cantor set which turned out to be false, but only after seventy years. Series (or sequences) satisfying $a_n>r_n$ for all $n$ are called \textsl{fast convergent}.

A set $P\subset \mathbb R$ is said to be a Cantorval if it is homeomorphic to the set
$$
GN\ :=\ C\,\cup\,\bigcup_n G_{2n-1}
$$
where $C$ is the Cantor ternary set $C=A(\tfrac2{3^n})$ and $G_{2n-1}$ is the union of all $4^{n-1}$ $C$-gaps of order $2n-1$. It is known that a Cantorval is exactly a nonempty compact set in $\mathbb R$ such that it is the closure of its interior and both endpoints of every nontrivial component are accumulation points of its trivial components. Other topological characterizations of Cantorvals can be found in \cite{MO} and \cite{BFPW1}.

The fundamental Guthrie-Nymann classification theorem (\cite[Thm.1]{GN88}, \cite{NS0}) asserts that the achievement set of an absolutely convergent series always is of one of the following four topological types: a finite set, a multi-interval set, a Cantor set or a Cantorval. Proving that a particular series generates a Cantorval is rather difficult and thus almost all known examples of such series are the \textsl{multigeometric series} whose general term is of the form $a_{(n-1)k+i}\,=\,l_iq^n$, $n\in\mathbb N$, $i\in\{1,2,\ldots,k\}$ for some $k\in\mathbb N$, $q\in(0,\,1)$ and some real numbers $l_1\ge l_2\ge \ldots\ge l_k>0$. Such a multigeometric sequence will be denoted by $(l_1,l_2,\ldots,l_k;\,q)$ and the notation will be used in the Thm. \ref{sufficientcantorval}. The only two known examples of families of non-multigeometric Cantorvals can be found in \cite{VMPS19} and \cite{FN23}.

One of known and easy to use sufficient conditions for the achievement set to be a Cantor set uses the notion of a semi-fast convergent series \cite{BFPW2}. A series $\sum a_n$ with monotonic and positive terms convergent to 0 is called \textsl{semi-fast convergent} if it satisfies the condition
$$
a_n\ >\ \sum_{k:\,a_k<a_n}a_k \qquad\text{for all $n$}.
$$
If $\sum a_n$ is a semi-fast convergent series, then there exist two uniquely determined sequences, $(\alpha_k)$ of positive numbers decreasing to 0 and $(N_k)$ of positive integers, such that 
$$
a_n\ =\ \alpha_k\qquad\qquad \text{for} \qquad \sum_{j=0}^{k-1}N_j\ <\ n\ \le\ \sum_{j=0}^kN_j
$$
where $N_0:=0$. The numbers $\alpha_k$ are the values of the terms of the series $\sum a_n$ and $N_k$ is the multiplicity of the value $\alpha_k$ in the series $\sum a_n$. Thus, we can identify
$$
\sum a_n\ =\ \sum(\alpha_k,\,N_k) \qquad\qquad\text{or}\qquad\qquad (a_n)\ =\ (\alpha_k,\,N_k)
$$ 
and the sum of the series is $\sum a_n\,=\,\sum\,(\alpha_k,\,N_k)\,=\,\sum_{k=1}^\infty\alpha_kN_k$. Theorem 16 of \cite{BFPW2} states that if $\sum a_n$ is semi-fast convergent then $A(a_n)$ is a Cantor set.

Let $(b_n)$ and $(c_n)$ be two sequences of real numbers. We will say that a sequence $(a_n)$ is the \textsl{union of the sequences} $(b_n)$ and $(c_n)$ if there is a partition $\mathbb N=N\sqcup M$ with both subsets $N$ and $M$ infinite such that
\begin{equation}
\label{unionseq}
(b_n)_{n\in\mathbb N}\ =\ (a_k)_{k\in M} \qquad \text{and} \qquad (c_n)_{n\in\mathbb N}\ =\ (a_k)_{k\in N}.
\end{equation}
We will write $(a_n) = (b_n) \cup (c_n)$.
Loosely speaking, it means that $(a_n)$ is the mixture of all terms of $(b_n)$ and all terms of $(c_n)$. We will also say that the sequences $(b_n)$ and $(c_n)$ form a decomposition of the sequence $(a_n)$. The admissible partition $\mathbb N=M\sqcup N$ is not unique if the sequences
$(b_n)$ and $(c_n)$ have a common value. If $(a_n)$ is the union of sequences $(b_n)$ and $(c_n)$, and $\mathbb N=M\sqcup N$ is a fixed partition satisfying \eqref{unionseq}, then we will write $a_k\in(b_n)$ whenever $k\in M$. It is not difficult to see that if $\sum b_n$ and $\sum c_n$ are convergent series of positive and nonincreasing terms, then $A(b_n)+A(c_n)=A(a_n)$.

In the end of this section we present additional notation used in this paper. Let $B \subset \R$. By $\lambda(B)$ we denote the Lebesgue measure of a set $B$. If $B$ is finite, then by $|B|$ we denote the cardinality of the set $B$. If $B$ is an interval, then by $|B|$ we denote the length of $B$.

\section{General remarks on achievable sets}

We will start with a rather general observation on gaps of achievable sets. But first, let us recall The Second Gap Lemma.

\begin{lemma}[The Second Gap Lemma, \cite{BFPW1}]
Let $(\alpha,\,\beta)$ be an $A$-gap of order $k$. Then $\beta\in F_k$ and hence $\beta=f_j^k$ for a unique $j\in\{2,\,3,\,\ldots,\,m_k\,\}$. Moreover, $\alpha=f_{j-1}^k+r_k$.
\end{lemma}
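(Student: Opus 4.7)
The plan is to unravel the definitions of $I_{k-1}$, $I_k$, and $\mathcal{G}_k$, and analyze what the two endpoints of a gap component must look like. Recall that $I_k = \bigcup_{f \in F_k}[f,\,f+r_k]$, that $F_k = F_{k-1} \cup (F_{k-1}+a_k)$, and that $r_{k-1} = a_k + r_k$, so each interval $[f,\,f+r_{k-1}]$ in $I_{k-1}$ splits (or fails to split) into two sub-intervals $[f,\,f+r_k]$ and $[f+a_k,\,f+a_k+r_k]$ in $I_k$. Thus $I_k \subset I_{k-1}$, and $(\alpha,\beta)$ being a component of $I_{k-1}\setminus I_k$ forces both endpoints $\alpha,\beta$ to lie in $I_{k-1}\cap I_k$.

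I would first settle $\beta$. Since $\beta \in I_k$, there exists $f\in F_k$ with $\beta \in [f,\,f+r_k]$. Points immediately to the left of $\beta$ lie in the open gap $(\alpha,\beta)$ and so miss $I_k$ entirely; hence $\beta$ cannot be in the interior nor at the right end of $[f,\,f+r_k]$. The only remaining option is $\beta = f$, so $\beta \in F_k$. Because the enumeration $F_k=(f_j^k)_{j=1}^{m_k}$ is strictly increasing and the elements of $F_k$ are distinct, the index $j$ with $\beta = f_j^k$ is unique, and $j\geq 2$ because $\beta>0 = f_1^k$ (as $\alpha\geq 0$ and $\alpha<\beta$).

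For $\alpha$, the argument is symmetric. Since $\alpha\in I_k$, there is $f'\in F_k$ with $\alpha\in[f',\,f'+r_k]$; and since no point immediately to the right of $\alpha$ belongs to $I_k$, the point $\alpha$ must be the right endpoint $f'+r_k$ of this interval. It remains to identify $f'$. Suppose for contradiction that $f' < f_{j-1}^k$; then $f_{j-1}^k < \beta = f_j^k$, so $[f_{j-1}^k,\,f_{j-1}^k+r_k]\subset I_k$ lies between $\alpha$ and $\beta$, contradicting the fact that $(\alpha,\beta)$ is a single component of $I_{k-1}\setminus I_k$. Likewise $f' \geq f_j^k = \beta > \alpha = f'+r_k$ is impossible. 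Hence $f' = f_{j-1}^k$ and $\alpha = f_{j-1}^k + r_k$.

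The only delicate point is the final identification $f'=f_{j-1}^k$: one must make sure that no other element of $F_k$ is squeezed strictly between $f_{j-1}^k$ and $f_j^k$, which is automatic from the definition of the increasing enumeration, and that the corresponding interval $[f_{j-1}^k,\,f_{j-1}^k+r_k]$ is genuinely contained in $I_{k-1}$ so that everything takes place inside $I_{k-1}$ where the gap lives; this follows from $I_k\subset I_{k-1}$. With those observations the lemma drops out.
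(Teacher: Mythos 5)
The paper states the Second Gap Lemma with a citation to \cite{BFPW1} and does not reproduce its proof, so there is no in-paper argument to compare against; your proof has to stand on its own, and it does. The structure --- establish $\alpha,\beta\in I_k$, then use the fact that no point of $I_k$ lies just inside the gap to pin $\beta$ down as a left endpoint and $\alpha$ as a right endpoint of intervals $[f,f+r_k]$, $f\in F_k$, then identify the two values of $F_k$ involved as consecutive --- is correct and is the natural, essentially standard argument.

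One step deserves to be made sharper. When you suppose $f'<f_{j-1}^k$ and say that $[f_{j-1}^k,\,f_{j-1}^k+r_k]$ ``lies between $\alpha$ and $\beta$'', you are implicitly using $\alpha<f_{j-1}^k$, which does not follow from $f'<f_{j-1}^k$ alone. It does follow from what you have already established: since $\alpha$ is a right endpoint $f'+r_k$ and points just to its right avoid $I_k$, no $f''\in F_k$ can have $\alpha\in[f'',f''+r_k)$; if $f_{j-1}^k\le\alpha$, then $\alpha=f'+r_k<f_{j-1}^k+r_k$ would put $\alpha$ in $[f_{j-1}^k,\,f_{j-1}^k+r_k)$, a contradiction. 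Once $\alpha<f_{j-1}^k<\beta$ is in hand, you only need the single point $f_{j-1}^k\in(\alpha,\beta)\cap I_k=\emptyset$ to finish (no claim about the whole interval sitting inside the gap is required), and the remark at the end about $[f_{j-1}^k,\,f_{j-1}^k+r_k]\subset I_{k-1}$ is not actually used anywhere in the argument.
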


\begin{proposition}[The $k$-th order gap lemma]
\label{prop1}
The principal gap of order $k$ has the maximal length among all $A$-gaps of order $k$.
\end{proposition}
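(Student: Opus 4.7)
My plan would be to show that every $A$-gap of order $k$ has length at most $a_k-r_k$, which is exactly the length of the principal gap $(r_k,a_k)$. I would first observe that the hypothesis $\mathcal{G}_k\ne\emptyset$ forces $a_k>r_k$, so the principal gap really does exist; a brief check (using $F_{k-1}\subset F_k$ and the fact that every nonzero element of $F_k$ is at least $a_k$, so no $[f,f+r_k]$ with $f\in F_k$ can meet $(r_k,a_k)$) confirms that $(r_k,a_k)$ is itself a component of $I_{k-1}\setminus I_k$, i.e.\ a gap of order $k$, of length $a_k-r_k$.

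For the upper bound, I would take an arbitrary $G=(\alpha,\beta)\in\mathcal{G}_k$ and any point $p\in G$. Since $G\subset I_{k-1}$, there exists $f_0\in F_{k-1}$ with $p\in[f_0,\,f_0+r_{k-1}]$. Using $r_{k-1}=a_k+r_k$ together with $F_k\supset F_{k-1}\cup(F_{k-1}+a_k)$, this interval decomposes as
\[
[f_0,\,f_0+r_k]\ \cup\ (f_0+r_k,\,f_0+a_k)\ \cup\ [f_0+a_k,\,f_0+r_{k-1}],
\]
where the two closed pieces lie in $I_k$. Since $p\notin I_k$, this forces $p\in(f_0+r_k,\,f_0+a_k)$.

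The heart of the argument is then to upgrade this pointwise statement to the containment $G\subset(f_0+r_k,\,f_0+a_k)$, which instantly yields $|G|\le a_k-r_k$. This follows by a short connectedness observation: if $\beta$ exceeded $f_0+a_k$, the point $f_0+a_k$ would lie strictly between $p$ and $\beta$ and hence inside the open gap $G$, contradicting $f_0+a_k\in I_k$; symmetrically, $\alpha\ge f_0+r_k$. I do not foresee any serious obstacle here; the only mild subtlety is realising that a gap of order $k$, although a priori merely a component of $I_{k-1}\setminus I_k$, is in fact trapped inside a single ``principal slot'' $(f+r_k,\,f+a_k)$ associated with one particular $I_{k-1}$-building interval, and cannot straddle several such slots. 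The Second Gap Lemma would offer a parallel route via the identity $|G|=f_j^k-f_{j-1}^k-r_k$ and a combinatorial analysis of $F_k$, but the direct argument above bypasses that and relies only on the geometric description of the iterates $I_{k-1}$ and $I_k$.
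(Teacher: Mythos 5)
Your proof is correct, and it takes a genuinely different — and arguably cleaner — route than the paper's. The paper proceeds via the Second Gap Lemma, writing $\beta=f_j^k$ and $\alpha=f_{j-1}^k+r_k$ and then splitting into three cases according to whether $f_j^k$ and $f_{j-1}^k$ admit representations that use the index $k$; each case requires a small combinatorial argument about consecutive elements of $F_k$ and $F_{k-1}$. Your argument instead works directly at the level of the iterates: you observe that inside a building block $[f_0,f_0+r_{k-1}]$ of $I_{k-1}$, the two end-intervals $[f_0,f_0+r_k]$ and $[f_0+a_k,f_0+r_{k-1}]$ belong to $I_k$ (because $f_0\in F_{k-1}\subset F_k$ and $f_0+a_k\in F_k$), so any point of $I_{k-1}\setminus I_k$ inside this block is trapped in the middle slot $(f_0+r_k,\,f_0+a_k)$, and the whole gap component containing it must stay in that slot since its two delimiters lie in $I_k$. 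This gives the bound $|G|\le a_k-r_k$ with no casework and no appeal to the Second Gap Lemma. The one thing worth making explicit, which you gesture at, is that $I_k\cap[f_0,f_0+r_{k-1}]$ may contain more than those two end-pieces (other elements of $F_k$ may fall in that block), but this only helps: you need inclusion, not equality, and the two pieces you name suffice to pin the gap into the slot. The paper's route has the modest advantage of producing the exact formula $|G|=f_j^k-f_{j-1}^k-r_k$ as a by-product; your route is shorter and exposes the geometry more directly.
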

\begin{proof}
Let $(\alpha,\,\beta)$ be an $A$-gap of order $k$. From The Second Gap Lemma, we have $\beta\in F_k$, and hence $\beta=f_j^k$ for a unique $j\in\{2,\,3,\,\ldots,\,m_k\,\}$. It follows that $\alpha=f_{j-1}^k+r_k$.

If $f_j^k$ has a representation $B$ with $k\in B$, then $f_{j-1}^k\ge f_j^k-a_k$ and hence,
$$
\beta-\alpha\ =\ f_j^k-(f_{j-1}^k+r_k)\ \le a_k-r_k,
$$
that is, the gap $(\alpha,\,\beta)$ is no longer than the principal gap $(r_k,\,a_k)$.

If $f_j^k$ has no representation involving $k$, then $k\ge2$ and $f_j^k=f_i^{k-1}$ for some $i\in\{2,\,3,\,\ldots,\,m_{k-1}\}$. We need to consider two cases.

\textsl{Case} 1: $f_{j-1}^k$ has no representation with $k$. Since there are gaps of order $k$, we have $r_k < a_k$, and so
$$
A\ \supset\ F_k\ \ni\ f_{j-1}^k+a_k\ >\ f_{j-1}^k+r_k\ =\ \alpha,
$$
and hence $f_{j-1}^k+a_k>\beta$ which implies that
$$
\beta\,-\,\alpha\ \le\ (f_{j-1}^k+a_k)\,-\,(f_{j-1}^k+r_k)\ =\ a_k\,-\,r_k.
$$

\textsl{Case} 2: $f_{j-1}^k$ has a representation with $k$. Then $f_{j-1}^k=f_s^{k-1}+a_k$ for some $s\in\{1,\,2,\,\ldots,\,m_{k-1}\}$. We have $f_s^{k-1} < f_{j-1}^k < f_j^k =f_{i}^{k-1}$, thus $f_{i-1}^{k-1}\ge f_s^{k-1}$. Now, if $f_{i-1}^{k-1}=f_s^{k-1}$, then $$\alpha=f_{j-1}^k+r_k=f_{i-1}^{k-1}+a_k+r_{k}=f_{i-1}^{k-1}+r_{k-1}$$ and $\beta=f_i^{k-1}$ which means that $(\alpha,\,\beta)$ is a gap of order at most $k-1$, a contradiction. Therefore, we have $f_{i-1}^{k-1}>f_s^{k-1}$, but then $F_k\ni f_{i-1}^{k-1}+a_k > f_s^{k-1}+a_k=f_{j-1}^k$ which implies that $f_{i-1}^{k-1}+a_k\ge f_j^k=\beta$. Since $f_j^k=f_i^{k-1}$, it must be $f_{j-1}^k\ge f_{i-1}^{k-1}$ and hence $f_{i-1}^{k-1}+r_k\le f_{j-1}^k+r_k=\alpha$. Finally,
$$
\beta\,-\,\alpha\ \le (f_{i-1}^{k-1}+a_k)\,-\,(f_{i-1}^{k-1}+r_k)\ =\ a_k\,-\,r_k
$$ and the proof is completed.
\end{proof}

As a simple corollary we obtain the powerful and frequently used well-known result \cite[Lemma 2.4]{recover}.

\begin{corollary}[The Third Gap Lemma]
Every dominating gap is principal.
\end{corollary}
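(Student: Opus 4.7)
The plan is to reduce the corollary to Proposition \ref{prop1} in a few lines. Given a dominating $A$-gap $G$, I would let $k$ be its order and aim to show $G$ coincides with the principal gap $(r_k,a_k)$.

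First, since an $A$-gap of order $k$ exists, I would note that $r_k<a_k$, so the principal gap $(r_k,a_k)$ does exist (by The First Gap Lemma). Then I would verify that $(r_k,a_k)$ is itself an $A$-gap of order $k$ and is strictly to the left of every other such gap. This uses only that $f_1^k=0$ and $f_2^k=a_k$; indeed $a_k$ is the smallest positive element of $\Sigma\{a_1,\ldots,a_k\}$ because the terms are nonincreasing and positive. So the two leftmost components of $I_k$ are $[0,r_k]$ and $[a_k,a_k+r_k]$, with the principal gap between them, and any other $A$-gap of order $k$ has right endpoint $f_j^k$ with $j\ge 3$, hence strictly greater than $a_k$, placing it to the right of $(r_k,a_k)$.

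Next I would invoke Proposition \ref{prop1}: the principal gap $(r_k,a_k)$ has length at least $|G|$. If $G$ were different from $(r_k,a_k)$, then the principal gap would lie strictly to the left of $G$ while having length $\ge|G|$, contradicting the strict inequality required by the definition of a dominating gap. Hence $G=(r_k,a_k)$, a principal gap, and the corollary follows.

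The argument is essentially mechanical once Proposition \ref{prop1} is in hand; the only point requiring even a moment's care is confirming that the principal gap of order $k$ is positioned to the left of every other $A$-gap of that order, so that a nonprincipal candidate would indeed have a longer-or-equal competitor strictly to its left. I do not anticipate any serious obstacle.
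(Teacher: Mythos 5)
Your argument is correct and is exactly the intended derivation; the paper states this as an immediate corollary without writing out the reduction, and you have supplied it in the obvious way. You correctly observe that if the dominating gap $G$ has order $k$, then $r_k<a_k$, so the principal gap $(r_k,a_k)$ is itself an $A$-gap of order $k$ with right endpoint $f_2^k=a_k$, hence it lies strictly to the left of every other gap of that order; combined with Proposition \ref{prop1} this forces $G=(r_k,a_k)$, since otherwise the dominating property (strictly shorter gaps to the left) would be violated by $(r_k,a_k)$ itself. No gaps or missing steps.
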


Recall that a set $A$ in a topological space is said to be regularly closed if it is the closure of its interior, that is, $A=\overline{\text{int}\,A}$. The notion provides one more topological characterization of Cantorvals that we formulate below and that, unlike the other two principal topological characterizations of Cantorvals (cf. \cite[pp.331 and 343]{MO} and \cite[Thm. 21.17]{BFPW1}), does not evoke neither $A$-gaps nor $A$-intervals.

\begin{theorem}
\label{charCvl}
A bounded set $A\subset\mathbb R$ is a Cantorval if and only if it is regularly closed and its boundary $\text{Fr}\,A$ is a Cantor set.
\end{theorem}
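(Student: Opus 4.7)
The plan is to bootstrap the theorem from the alternative topological characterization of Cantorvals recalled in the introduction: a set $P$ is a Cantorval if and only if it is nonempty and compact, regularly closed, and both endpoints of every nontrivial component of $P$ are accumulation points of its trivial components. Both implications will be routed through this characterization.

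For the forward direction I would note that regular closure is already part of the characterization, and then verify that $\on{Fr} A = A\setminus\on{int} A$ is nonempty, compact, nowhere dense and perfect. It is nonempty because a Cantorval must contain at least one trivial component, and trivial components lie in $\on{Fr} A$; it is compact for free; it is nowhere dense because any nonempty open interval contained in $A$ automatically sits inside $\on{int} A$, contradicting containment in $\on{Fr} A$. Perfectness is the step needing some care. At an endpoint $x$ of a nontrivial component, the characterization directly supplies a sequence of trivial components converging to $x$, all of which lie in $\on{Fr} A$. For $x$ forming a trivial component I would invoke regular closure once more to exclude $x$ being an isolated point of $A$; this forces a sequence of distinct components of $A$ to accumulate at $x$, and each such neighboring component contributes a point of $\on{Fr} A$ (either the trivial component itself or the nearer endpoint of a nontrivial one) arbitrarily close to $x$.

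For the reverse implication, assume $A$ is regularly closed and $\on{Fr} A$ is a Cantor set. Only the trivial-component accumulation clause requires work. Given a nontrivial component $[a,b]$, I plan to examine $(a-\ve,a)\cap\on{Fr} A$ for arbitrary $\ve>0$. Since $(a,b)\subset\on{int} A$, the point $a$ can be approached inside $\on{Fr} A$ only from the left, and perfectness of $\on{Fr} A$ then forces $(a-\ve,a)\cap\on{Fr} A$ to be nonempty, hence uncountable as a nonempty relatively open subset of a Cantor set. The critical observation is that $A$ has at most countably many nontrivial components (each contains a rational in its interior), so only countably many points of $\on{Fr} A$ are endpoints of such components. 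Consequently uncountably many points of $(a-\ve,a)\cap\on{Fr} A$ are trivial components of $A$, which supplies the required accumulation; the case of $b$ is symmetric.

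The main obstacle is exactly this cardinality step in the reverse direction: one needs to rule out that the only boundary points clustering at $a$ are endpoints of other nontrivial components, and it is perfectness of $\on{Fr} A$ that does the heavy lifting, promoting a single one-sided cluster point into an uncountable set that overwhelms the countable pool of such endpoints.
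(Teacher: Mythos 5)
Your proof is correct, and it is a genuinely different route from the one in the paper. The paper pivots on Theorem~\ref{21.17} (the BFPW characterization: $P$-gaps and $P$-intervals share no endpoints and the union of $P$-intervals is dense), whereas you pivot on the Mendes--Oliveira style characterization stated in the introduction (regularly closed, with both endpoints of every nontrivial component accumulated on by trivial components). This changes the shape of both halves. In the forward direction the paper dispenses with perfectness of $\operatorname{Fr} A$ in one stroke --- an isolated boundary point would force a gap and an interval to share an endpoint, which Theorem~\ref{21.17} forbids --- while you run a two-case analysis (endpoints of nontrivial components versus singleton components) that is a bit longer but equally valid and more self-contained. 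The difference is starker in the reverse direction: the paper observes that a shared endpoint of a gap and an interval would be an isolated point of $\operatorname{Fr} A$, contradicting perfectness, and then invokes Theorem~\ref{21.17}; you instead argue that $(a-\varepsilon,a)\cap\operatorname{Fr} A$ is a nonempty relatively open, hence uncountable, subset of a Cantor set, and that only countably many of its points can be endpoints of nontrivial components, so trivial components must accumulate at $a$. Your cardinality argument is heavier machinery than the paper needs, but it is sound, and it has the small bonus of making the conclusion quantitatively stronger (uncountably many trivial components in every left neighborhood of $a$). The paper's local ``isolated boundary point $\Leftrightarrow$ gap--interval common endpoint'' observation is the slicker route if one is willing to use Theorem~\ref{21.17} as the black box.
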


Before proving it, let us note that, thanks to the above equivalence, the topological classification of achievement sets of absolutely convergent series amounts to choosing one property of the boundary: $\text{Fr}\,A$ is either a finite set or a Cantor set, and choosing the relative size of the interior: $A$ is either nowhere dense or regularly closed. Any of four such combinations of these properties characterizes exactly one topological type of achievement sets.

\begin{corollary}
\label{cortyp}
Let $A$ be an achievement set of an absolutely convergent series. Then $A$ is closed and bounded and:
\begin{itemize}
\item[(i)] $A$ is nowhere dense and $\text{Fr}\,A$ is finite if and only if $A$ is a finite set;
\item[(ii)] $A$ is nowhere dense and $\text{Fr}\,A$ is a Cantor set if and only if $A$ is a Cantor set;
\item[(iii)] $A$ is regularly closed and $\text{Fr}\,A$ is finite if and only if $A$ is a multi-interval set;
\item[(iv)] $A$ is regularly closed and $\text{Fr}\,A$ is a Cantor set if and only if $A$ is a Cantorval.
\end{itemize}
\end{corollary}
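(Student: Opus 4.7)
The plan is to derive the four equivalences simultaneously from the Guthrie--Nymann classification together with Theorem \ref{charCvl}. Guthrie--Nymann guarantees that every achievement set falls into exactly one of the four topological types appearing on the right-hand sides of (i)--(iv). Once I verify that the four property pairs on the left-hand sides are pairwise incompatible, it suffices to prove one direction of each equivalence, namely that each topological type forces the corresponding property pair; the reverse implications then follow by elimination.

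For the forward implications, I would argue as follows. If $A$ is a finite set, then $A$ is discrete, hence trivially nowhere dense, and $\text{Fr}\,A = A$ is finite. If $A$ is a Cantor set, it is perfect and nowhere dense by definition, so $\text{Fr}\,A = A$ is itself a Cantor set. If $A$ is a multi-interval set, that is, a finite union of nondegenerate closed bounded intervals, then $A = \overline{\text{int}\,A}$ and $\text{Fr}\,A$ is the finite set of endpoints of those intervals. Finally, if $A$ is a Cantorval, Theorem \ref{charCvl} delivers exactly the pair of properties in (iv).

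To promote these one-sided implications to equivalences, I observe that the four property pairs are pairwise incompatible. Since every achievement set contains $0$ and is therefore nonempty, and a nonempty nowhere dense set has empty interior while a regularly closed set equals the closure of its nonempty interior, no achievement set can be simultaneously nowhere dense and regularly closed. Similarly, a boundary cannot be at once finite and homeomorphic to the Cantor set. Hence the assignment ``topological type $\mapsto$ property pair'' built above is injective, and by Guthrie--Nymann it is surjective onto the four pairs that can actually occur, which upgrades the forward implications into the full equivalences of (i)--(iv).

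I do not anticipate any real obstacle: once Theorem \ref{charCvl} is in hand, the present corollary is essentially a bookkeeping exercise combining the classical structural descriptions of finite sets, Cantor sets, and multi-interval sets with the Guthrie--Nymann classification.
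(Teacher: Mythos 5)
Your proposal is correct and follows essentially the same route the paper intends: the authors state the corollary as a direct consequence of Theorem~\ref{charCvl} together with the Guthrie--Nymann classification, exactly as you spell out. The paper leaves the verification implicit (in the remark between Theorem~\ref{charCvl} and the corollary), so your write-up is a faithful unpacking rather than a different proof.
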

Before proving Theorem \ref{charCvl} let us recall another characterization of Cantorvals.
\begin{theorem}{\cite[Theorem 21.17]{BFPW1}} \label{21.17}
A nonempty perfect and compact set $P \subset \R$ is a Cantorval if and only if $P$-gaps and $P$-intervals do not have common points and the union of all $P$-intervals is dense in $P$.  

\end{theorem}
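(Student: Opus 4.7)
The plan is to derive this characterization from the alternative description of Cantorvals recalled in the introduction: a nonempty compact set $P \subset \R$ is a Cantorval if and only if $P = \overline{\text{int}\,P}$ and both endpoints of every $P$-interval are accumulation points of the trivial (singleton) components of $P$.

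For the direction Cantorval $\Longrightarrow$ (i)+(ii), note that $\text{int}\,P$ equals the union of the open $P$-intervals, so the equality $P = \overline{\text{int}\,P}$ yields density of the closed $P$-intervals in $P$, establishing (ii). For (i), suppose a $P$-gap $(\alpha,\beta)$ met a $P$-interval $[\beta,\gamma]$ at the shared endpoint $\beta$. Then no left-sided neighborhood of $\beta$ can contain any trivial component of $P$ (since $(\alpha,\beta) \cap P = \emptyset$), and every sufficiently small right-sided neighborhood of $\beta$ consists of points inside $[\beta,\gamma]$, which are not trivial components either. Hence $\beta$ fails to be an accumulation point of trivial components, contradicting the alternative characterization; the symmetric configuration is handled analogously.

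For the converse, let $P$ be nonempty, perfect, and compact, satisfying (i) and (ii). Density of the $P$-intervals in $P$ gives $P = \overline{\text{int}\,P}$. It remains to verify that every endpoint of every $P$-interval is an accumulation point of trivial components. Fix a $P$-interval $[b,c]$ and concentrate on its left endpoint $b$; the argument for $c$ is symmetric. Assume for contradiction that some $\delta > 0$ exists with the property that $(b - \delta, b)$ contains no trivial component of $P$. Then every point of $P \cap (b - \delta, b)$ lies in some $P$-interval, and maximality of the components lets us write $P \cap (b - \delta, b) = \bigcup_j [b_j, c_j]$ as a countable disjoint union of $P$-intervals. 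The open set $U := (b - \delta, b) \setminus \bigcup_j [b_j, c_j]$ is a disjoint union of open intervals; because no endpoint of a component of $U$ can sit in the interior of any $[b_k, c_k]$, each component $(L, R)$ of $U$ must satisfy $L \in \{b - \delta\} \cup \{c_j\}_j$ and $R \in \{b\} \cup \{b_j\}_j$. A case analysis closes the argument: whenever $L = c_j$ (or $R = b_j$), the bounded component of $\R \setminus P$ containing $(L, R)$ is a $P$-gap sharing an endpoint with a $P$-interval, violating (i); in the only remaining configuration $L = b - \delta$ and $R = b$ we obtain $U = (b - \delta, b)$ and $P \cap (b - \delta, b) = \emptyset$, so the complementary component of $\R \setminus P$ abutting $b$ from the left is a $P$-gap terminating at the $P$-interval endpoint $b$, again violating (i). Thus trivial components of $P$ accumulate at $b$ from the left, completing the proof.

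The principal obstacle is the bookkeeping in the converse: one must verify that the endpoints of the components of $U$ are constrained as claimed, and then ensure that in every admissible configuration the outcome is a $P$-gap sharing an endpoint with some $P$-interval — a direct violation of (i). Once this geometric observation is in place, the case analysis closes quickly.
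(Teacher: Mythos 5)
The paper does not prove this statement at all --- it is quoted from \cite[Thm.~21.17]{BFPW1} --- so there is no in-paper argument to compare yours against. Your strategy of deriving it from the other characterization recalled in the introduction (that a Cantorval is a nonempty compact set equal to $\overline{\text{int}\,P}$ such that both endpoints of every nontrivial component are accumulation points of trivial components) is a legitimate route, and the forward direction, as well as the equivalence of condition (ii) with $P=\overline{\text{int}\,P}$, is correct.

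There is, however, a genuine gap in the converse, concentrated in the last cases of your bookkeeping. When $R=b$ (or $R=b_k$) and the interval endpoint in question equals $\min P$, the complementary component of $\R\setminus P$ abutting it from the left is the unbounded ray $(-\infty,\min P)$; that is not a $P$-gap under the usual convention that gaps are the \emph{bounded} components of the complement, so no violation of (i) is produced and your contradiction evaporates. This is not a removable technicality: the statement as transcribed actually fails in this configuration. The set $P=[0,1]$ is nonempty, perfect and compact, has no $P$-gaps (so the first condition holds vacuously), and its unique $P$-interval is all of $P$ (so the second condition holds), yet it is not a Cantorval. A disconnected example is $P=[0,1]\cup(1+GN)$: since $\{0\}$ is a trivial component of $GN$ and no $GN$-gap has $0$ as an endpoint, $P$ satisfies (i) and (ii), but the left endpoint $0$ of the component $[0,1]$ has a neighborhood in $P$ homeomorphic to a half-open interval, which no point of a Cantorval has. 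To make both the theorem and your proof work you must either count the two unbounded complementary components (the ``external gaps'') among the $P$-gaps --- with that convention your argument closes, because the ray abutting $b$ then shares the endpoint $b$ with $[b,c]$ regardless of boundedness --- or explicitly exclude the configuration in which a $P$-interval has $\min P$ or $\max P$ as an endpoint. A smaller imprecision: $P\cap(b-\delta,b)$ is a union of sets of the form $[b_j,c_j]\cap(b-\delta,b)$, and the leftmost of these may be a truncation of a $P$-interval protruding past $b-\delta$ rather than a full $P$-interval; this does not damage the case analysis, but it should be stated, since you use the $c_j$ as right endpoints of genuine $P$-intervals when invoking (i).
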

Now, we can prove Theorem \ref{charCvl}.
\begin{proof} [Proof of \emph{Theorem \ref{charCvl}}] \

Let $ A \subset \mathbb R$ be bounded.

($\Rightarrow$) Suppose that $A$ is a Cantorval. Then $A$ is closed and the union of all $A$-intervals is dense in $A$ (Theorem \ref{21.17}). Hence, the union of interiors of all $A$-intervals is dense in $A$ as well, but the last union is the interior of $A$. Thus, $A$ is regularly closed. The boundary of $A$ is a bounded nowhere dense closed subset of $\mathbb R$. It is nonempty, because it contains all endpoints of $A$-intervals. It remains to show that $\text{Fr}\,A$ is a perfect set. Indeed, if there was an isolated point in $\text{Fr}\,A$, it would be a common endpoint of an $A$-interval and an $A$-gap, because $A$ has no isolated points. Such common endpoints do not exist by Theorem \ref{21.17}, and hence $\text{Fr}\,A$ has no isolated points.

($\Leftarrow$) Suppose that $A$ is regularly closed and its boundary is a Cantor set. Then the interior of $A$ equals to the union of interiors of all $A$-intervals which is contained in the union of all $A$-intervals, which in turn is a subset of $A$. Passing to closures, we obtain
$$
A\ =\ \overline{\text{int}\,A}\ \subset\ \overline{\text{the union of all $A$-intervals}}\ \subset \ A,
$$
and thus the union of all $A$-intervals is dense in $A$. Since $\text{Fr}\,A$ is a Cantor set, it has no isolated points, and hence $A$-gaps and $A$-intervals cannot have common endpoints. From Theorem \ref{21.17} it follows that $A$ is a Cantorval.
\end{proof}

Note that the starting assumption of Theorem \ref{charCvl} cannot be removed as shows the example of any Cantorval with its external gaps attached.

Each multi-interval set $W$ (that is, the union of a finite family of closed and bounded intervals) is the union of infinitely many different finite families of closed and bounded intervals. However, one of those families stands out and is uniquely determined by $W$, namely, the family of all connectivity components of $W$. Thus, unless specified otherwise, by writing $W=\bigcup_{i=1}^nP_i$, we mean that $\{P_i: \ 1\le i\le n\,\}$ is the family of connectivity components of $W$. Given a multi-interval set $W=\bigcup_{i=1}^nP_i$, we define
$$
||W||\ :=\ \max_{1\le i\le n}|P_i|.
$$
If $(W_n)_{n\in\mathbb N}$ is a descending sequence of multi-interval sets, then $(||W_n||)_{n\in\mathbb N}$ is a nonincreasing sequence bounded from below and hence it converges to a nonnegative number.

\begin{lemma}
\label{multlem}
Let $(W_n)_{n\in\mathbb N}$ be a descending sequence of multi-interval sets. Then $\bigcap_nW_n$ contains and interval if and only if $\lim\limits_{n\to\infty}||W_n||>0$.
\end{lemma}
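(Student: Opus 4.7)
The plan is to handle the two implications separately; the forward direction is immediate, so the substance is in the converse.

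For $(\Rightarrow)$, assume $[a,b]\subset\bigcap_n W_n$ with $a<b$. Since $[a,b]$ is connected, for every $n$ it lies in a single connectivity component of $W_n$, so $\|W_n\|\ge b-a$ for all $n$. Hence $\lim_{n\to\infty}\|W_n\|\ge b-a>0$.

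For $(\Leftarrow)$, set $L:=\lim_{n\to\infty}\|W_n\|>0$ and, for each $n$, choose a component $P_n=[a_n,b_n]$ of $W_n$ of maximal length, so $b_n-a_n=\|W_n\|\ge L$. All $P_n$ are contained in the bounded set $W_1$, so the sequences $(a_n)$ and $(b_n-a_n)$ are bounded. Passing twice to subsequences (first via Bolzano--Weierstrass on $(a_n)$, then on $(b_n-a_n)$), I obtain indices $n_k\to\infty$ with $a_{n_k}\to a$ and $b_{n_k}-a_{n_k}\to L'\ge L$; hence $b_{n_k}\to b:=a+L'$.

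I claim $[a,b]\subset\bigcap_n W_n$. Fix any $x\in(a,b)$. For all sufficiently large $k$ we have $a_{n_k}<x<b_{n_k}$, hence $x\in P_{n_k}\subset W_{n_k}$. For any fixed $m$, choosing $k$ with $n_k\ge m$ gives $x\in W_{n_k}\subset W_m$ by descendingness, so $x\in\bigcap_n W_n$. Thus $(a,b)\subset\bigcap_n W_n$, and since each $W_n$ is closed, the intersection is closed and contains $[a,b]$. Since $b-a=L'\ge L>0$, this is a nondegenerate interval, completing the proof.

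The only mildly delicate point is that the components $P_n$ need not be nested (a maximal component of $W_{n+1}$ may sit inside a non-maximal component of $W_n$), so one cannot invoke the nested intervals theorem directly; the Bolzano--Weierstrass extraction is what sidesteps this and yields a single limit interval that is retrospectively contained in every $W_n$.
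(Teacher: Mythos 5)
Your proof is correct. Both directions work: the forward direction is the same as the paper's, and your converse argument is sound. In the converse, you choose maximal-length components $P_n=[a_n,b_n]$, pass to a subsequence via Bolzano--Weierstrass so that the endpoints converge, and show directly that every point of the limit interval $(a,b)$ lies in every $W_m$, then appeal to closedness. This is close in spirit to the paper's argument, which also picks maximal components and extracts a convergent subsequence, but the paper tracks the \emph{midpoints} $s_n$ of $P_n$ and uses the lower bound $|P_{n_l}|\ge g$ to show that along the subsequence the components $P_{n_k}$ are eventually \emph{nested}, after which the nested intervals theorem finishes. Your closing remark that ``one cannot invoke the nested intervals theorem'' is therefore slightly misleading: the paper in fact does exactly that, the point being that nesting \emph{can} be forced along a well-chosen subsequence. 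Your approach buys a small simplification in that it avoids proving nesting; the paper's buys a crisper final step. One further minor point: since $(\|W_n\|)$ is nonincreasing, the full sequence $(b_n-a_n)$ already converges to $L$, so in fact $L'=L$; writing $L'\ge L$ is harmless but hides this.
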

\begin{proof}
($\Rightarrow$) If $[a,\,b]\subset \bigcap_nW_n$, then $||W_n||\ge b-a$ for all $n$.

($\Leftarrow$) Suppose that $g:=\lim\limits_{n\to\infty}||W_n||>0$. Let $P_n=[a_n,\,b_n]$ be the most left of all components of $W_n$ of maximal length. Then $|P_n|=||W_n||\ge g$. Let $s_n$ be the middle point of the interval $P_n$. Clearly, $s_n\in W_n\subset W_1$ and hence the sequence $(s_n)_{n\in\mathbb N}$ is bounded. There is a convergent subsequence $(s_{n_k})_{k\in\mathbb N}$. In particular, there exists $M\in\mathbb N$ such that $|s_{n_k}-s_{n_l}|<g/2$ for all $l>k>M$. Then $s_{n_k}\in P_{n_l}$ and thus $s_{n_k}\in P_{n_k}\cap P_{n_l}$. Now, since $W_{n_l}\subset W_{n_k}$ and $P_{n_k}\cap P_{n_l}$ is nonempty, it must be $P_{n_l}\subset P_{n_k}$. The sequence $(P_{n_i})_{i\in\mathbb N}$ is descending and $|P_{n_i}|\ge ||W_{n_i}||\ge g>0$ for all $i$. Therefore, $\bigcap_iP_{n_i}$ is a non-degenerated closed interval contained in $\bigcap_nW_n$.
\end{proof}

We are now going to give an analytic characterization of those achievement sets that contain an interval. Unfortunately, the limit whose value is decisive is too unwieldy to compute in most cases and makes the characterization unsatisfactory except for some very special cases like, for example, the Ferens series \cite{BP} or the Guthrie-Nymann-Jones series \cite{recover}, \cite{B}.

Given an $\epsilon>0$, a finite increasing sequence $(f_i)_{i=m}^n$ will be called \textsl{$\epsilon$-close} either if $m=n$ or if $m<n$ and the distance between any two consecutive terms of the sequence does not exceed $\epsilon$. An $\epsilon$-close subsequence of an increasing sequence $F=(f_i)_{i=1}^k$ will be called \textsl{maximal} if it is not contained in any longer $\epsilon$-close subsequence of $F$. The set of all maximal $\epsilon$-close subsequences of a sequence $F$ will be denoted by $M_\epsilon[F]$. Every finite increasing sequence of real numbers has a unique decomposition into finitely many disjoint maximal $\epsilon$-close subsequences. Given an $\epsilon$-close sequence $(f_i)_{i=m}^k$, we define the \textsl{stretch} of the sequence by $S[(f_i)]:= f_k-f_m$. Finally, given a finite increasing sequence $F$, we define
$$
\Delta_\epsilon F\ :=\ \max\bigl\{S[(f_i)]:\ (f_i)\in M_\epsilon[F]\,\bigr\}.
$$
Now, let $\sum a_n$ be a convergent series of nonnegative and nonincreasing terms. For any $n\in\mathbb N$, the set $F_n$ of $n$-initial subsums $F_n=\{\sum_{i\in A}a_i\colon A\subset\mathbb \{1,2, \ldots,n\} \,\}$ forms an increasing and finite sequence. Recall that the symbol $r_n$ denotes the value of the $n$-th reminder of the series $\sum a_i$, that is, $r_n=\sum_{i=n+1}^\infty a_i$.

\begin{proposition}
\label{charint}
Let $\sum a_n$ be a convergent series of nonnegative and nonincreasing terms. Then the following conditions are equivalent:
\begin{itemize}
\item[(i)] the achievement set $A(a_n)$ contains an interval;
\item[(ii)] \ $\lim\limits_{n\to\infty}\Delta_{r_n}F_n\,>\,0$;
\item[(iii)] \ $\lim\limits_{k\to\infty}\Delta_{r_{n_k}}F_{n_k}\,>\,0$ for some increasing sequence $(n_k)$ of indices.
\end{itemize}
\end{proposition}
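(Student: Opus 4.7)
The plan is to translate condition (ii) into a statement about the maximal component length $||I_n||$ of the $n$-th iterate $I_n$, and then invoke Lemma \ref{multlem} with $W_n:=I_n$, recalling that $A(a_n)=\bigcap_n I_n$.

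First I would establish the key identity
$$
||I_n||\ =\ \Delta_{r_n}F_n\,+\,r_n \qquad (n\in\mathbb N).
$$
Writing $F_n=(f_j^n)_{j=1}^{m_n}$, the set $I_n=\bigcup_j[f_j^n,\,f_j^n+r_n]$ is a union of closed intervals of equal length $r_n$ listed in the order of their left endpoints, and two neighbouring intervals $[f_j^n,\,f_j^n+r_n]$ and $[f_{j+1}^n,\,f_{j+1}^n+r_n]$ overlap or touch precisely when $f_{j+1}^n-f_j^n\le r_n$. Hence the connected components of $I_n$ correspond bijectively to the maximal $r_n$-close subsequences of $F_n$: the component produced by a block $(f_m^n,\ldots,f_k^n)\in M_{r_n}[F_n]$ is exactly the interval $[f_m^n,\,f_k^n+r_n]$, whose length is $S[(f_i^n)]+r_n$. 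Taking the maximum over all such blocks yields the formula.

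By the remark preceding Lemma \ref{multlem}, the descending sequence $(I_n)$ of multi-interval sets gives a nonincreasing convergent sequence $(||I_n||)$; since $r_n\to 0$, the identity above shows that $\lim_n\Delta_{r_n}F_n$ exists and equals $\lim_n||I_n||$. The equivalence (i)$\Leftrightarrow$(ii) is then immediate from Lemma \ref{multlem}. The implication (ii)$\Rightarrow$(iii) is trivial (take $n_k=k$), while (iii)$\Rightarrow$(ii) follows because $(\Delta_{r_n}F_n)$ converges, so every subsequence converges to the same limit, and hence a positive subsequence limit forces $\lim_n\Delta_{r_n}F_n>0$.

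The main obstacle is the component-length identity; once that is in place, the proposition reduces to a direct application of Lemma \ref{multlem} together with elementary facts about limits and subsequences.
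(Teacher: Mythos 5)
Your proposal is correct and follows essentially the same route as the paper: both reduce the statement to the identity $||I_n||=\Delta_{r_n}F_n+r_n$ (which you justify in more detail than the paper, which simply asserts it) and then invoke Lemma \ref{multlem} applied to the descending sequence $(I_n)$, with the (ii)$\Leftrightarrow$(iii) equivalence settled by observing that $\Delta_{r_n}F_n=||I_n||-r_n$ converges so subsequential limits coincide with the full limit.
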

\begin{proof}
It is well-known (see \cite{BFPW1}) that
$$ A(a_i)\ =\ \bigcap_{n=1}^\infty I_n,
$$
where each $I_n=\bigcup_{f\in F_n}[f,\,f+r_n]$ is a multi-interval set. Since $||I_n||=\Delta_{r_n}F_n\,+\, r_n$, we have \ $\lim ||I_n||\,=\,\lim \,\Delta_{r_n}F_n$ and hence by the Lemma \ref{multlem} $A(a_i)$ contains an interval if and only if $\lim\limits_{n\to\infty}\Delta_{r_n}F_n>0$.

The equivalence (ii)\ $\Leftrightarrow$\ (iii) and, actually, the equality of the limits follows from the fact that the sequence $(r_n+\Delta_{r_n}F_n)_{n\in\mathbb N}$ is nonincreasing.
\end{proof}

\vspace{.2in}

\section{A family of achievable Cantorvals}

We are going to present a new family of series whose achievement sets are Cantorvals. Unlike the most of the known and broadly used examples of achievable Cantorvals (\cite{WS}, \cite{F}, \cite{GN88}, \cite{Jones}, \cite{recover}, \cite{BP}, \cite{B}, \cite{MM}), the series in our family do not need to be multigeometric. Our new family of achievable Cantorvals is large enough to serve later in this paper as a useful tool in finding a much needed example that cannot be found among Cantorvals generated by multigeometric series (see the Theorem \ref{immortal}).

Given a sequence $m=(m_n)$ of positive integers greater than or equal to 2, a sequence $k=(k_n)$ of positive integers such that $k_n> m_n$ for all $n$, and a sequence $q=(q_n)$ of positive numbers, we will be saying that a series $\sum a_n$ is a \textsl{generalized Ferens series} (shortly, a GF series) if for $i\in\mathbb N$, $n\in\big\{K_{i-1}+1,\,K_{i-1}+2,\,\ldots,\,K_i\,\big\}$, where $K_j:=\sum_{i=1}^jk_i$ for $j\in\mathbb N$ and $K_0:=0$, we have
$$
a_n\ =\ a_n(k,m,q)\ :=\ (m_i+K_i-n)q_i.
$$
Given positive integers $p,r$ with $r>p\ge2$, we will use the symbol $s(p,r):=\sum_{i=1}^{r-1}(p+i)$. We will also write $s_n:=s(m_n,k_n)$. Given an $n\in\mathbb N$, the set of all possible sums formed by integrands taken without repetition from the set
$$
\left\{a_p: \ p\in\{K_{n-1}+1,\, K_{n-1}+2,\,\ldots,\,K_n\,\}\,\right\}
$$
is exactly the set
$$
\bigl(\{0\}\cup\{m_n,\,m_n+1,\,\ldots,\,s_n\}\cup\{s_n+m_n\}\bigr) q_n \qquad \text{(see \cite[Fact 3]{BP})}.
$$

The next theorem gives the sufficient condition for the achievement set of a generalized Ferens series to be a Cantorval. A geometrical idea for this condition is similar to the one used in the paper \cite{Now2}.
\begin{theorem}
\label{nmg1}
If $\sum a_n(m,k,q)$ is a convergent GF series satisfying the conditions
\begin{equation*}
\label{gf1}
\tag{GF$_1$} \ \forall_{n\in\mathbb N}\qquad q_n\ \le\ (s_{n+1}-m_{n+1}+1)q_{n+1}
\end{equation*}
and
\begin{equation*}
\label{gf2}
\tag{GF$_2$} \ \forall_{n\in\mathbb N} \qquad m_nq_n\ >\ \sum_{i>n}(s_i+m_i)q_i,
\end{equation*}
then $A(a_n)$ is a Cantorval.
\end{theorem}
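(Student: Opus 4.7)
The plan is to apply Theorem \ref{charCvl}: I will verify that $A := A(a_n)$ is regularly closed and that $\mathrm{Fr}\,A$ is a Cantor set. The two hypotheses (GF$_1$) and (GF$_2$) are designed to deliver these two properties separately.

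First I plan to show that $A$ contains the closed interval $J := [\sum m_iq_i,\,\sum s_iq_i]$ using (GF$_1$). The idea is to form the auxiliary series $\sum b_n$ by listing $s_i - m_i$ copies of $q_i$ in decreasing order and verify Kakeya's interval-filling criterion $b_n\le r_n^{(b)}$ at its critical indices (the last copy of each $q_i$). Iterating (GF$_1$),
$$q_n\ \le\ \sum_{j=n+1}^N (s_j-m_j)q_j+q_N,$$
and letting $N\to\infty$ with $q_N\to 0$ (forced by convergence of $\sum a_n$) yields $q_n\le\sum_{j>n}(s_j-m_j)q_j$, which is precisely Kakeya's inequality at the only nontrivial indices. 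Therefore $A(b_n)=[0,\sum(s_i-m_i)q_i]$, and since each $y=\sum c_iq_i\in A(b_n)$ with $c_i\in\{0,\dots,s_i-m_i\}$ translates via $y+\sum m_iq_i=\sum(m_i+c_i)q_i$ to a sum achievable through the middle cluster of every block, I obtain $J\subset A$. Applying the same reasoning to the tail starting at block $N+1$, every $x=\sum x_i \in A$ lies within $R_N\to 0$ of a nondegenerate interval $y_N+J_{N+1}\subset A$ (with $y_N:=\sum_{i\le K_N}x_i$), and hence $A=\overline{\mathrm{int}\,A}$.

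Next I will show $\mathrm{Fr}\,A$ is a Cantor set, using (GF$_2$). Regular closedness already forces $\mathrm{Fr}\,A$ to be closed, bounded and nowhere dense. For nonemptiness, (GF$_2$) yields $a_{K_n}=m_nq_n > R_n = r_{K_n}$ at every $n$, so by the First Gap Lemma $(R_n,m_nq_n)$ is a principal $A$-gap, supplying two boundary points at every scale (with accumulation at $0$). For perfectness I exploit the self-similar block structure: given $x\in\mathrm{Fr}\,A$ and $\ve>0$, pick $N$ with $r_{K_N}<\ve/2$ and $f\in F_{K_N}$ with $|x-f|\le r_{K_N}$. The tail achievement set $B_{N+1}:=A((a_i)_{i>K_N})$ is again a GF achievement set whose parameters inherit (GF$_1$) and (GF$_2$), so its principal gap $(R_{N+1},m_{N+1}q_{N+1})$ translated by $f$ produces a candidate $A$-gap inside $(x-\ve,x+\ve)$; (GF$_2$) is then used to verify that no other translate $f'+B_{N+1}$ with $f'\in F_{K_N}\setminus\{f\}$ reaches into this interval, so the candidate is a genuine $A$-gap whose endpoints lie in $\mathrm{Fr}\,A$ near $x$.

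The technical heart will be the last verification in the perfectness step: ensuring the shifted principal gap is not filled in by other $f'$-translates of $B_{N+1}$. Condition (GF$_2$), which compares $m_nq_n$ against the full tail mass $\sum_{j>n}(s_j+m_j)q_j$ rather than merely $r_n$, is calibrated precisely to make this non-interference argument work at every scale simultaneously, and I expect the bookkeeping of how the $f'$'s in $F_{K_N}$ interact with the translated tail gap to be the most delicate part of the proof.
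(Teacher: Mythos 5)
Your argument that $A$ contains the closed interval $J=\left[\sum m_iq_i,\,\sum s_iq_i\right]$ and is therefore regularly closed is sound: iterating \eqref{gf1} yields $q_n\le\sum_{j>n}(s_j-m_j)q_j$, which is exactly what the greedy (or Kakeya) step needs, and the translated tail intervals $y_N+J_{N+1}$ then give $A=\overline{\text{int}\,A}$. This recovers essentially the same combinatorial content as the paper's induction on the finite sets $D_n=C_1+\cdots+C_n$ combined with Proposition~\ref{charint}, repackaged as an explicit mixed-radix expansion. After this point the paper finishes in a single stroke: \eqref{gf2} gives $r_{K_n}<a_{K_n}$ for every $n$, so $A$ has infinitely many gaps, and the Guthrie--Nymann classification theorem then forces $A$ to be a Cantorval (nonempty interior rules out finite sets and Cantor sets; infinitely many gaps rules out finite unions of intervals). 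You instead route through Theorem~\ref{charCvl}, and so you still owe a proof that $\text{Fr}\,A$ is perfect.

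That remaining step, as sketched, does not work. Your plan hinges on the claim that the translated principal tail gap $(f+R_{N+1},\,f+m_{N+1}q_{N+1})$ is a genuine $A$-gap because \eqref{gf2} prevents any other translate $f'+B_{N+1}$, $f'\in F_{K_N}$, from encroaching on it. But \eqref{gf2} is precisely the fast-convergence inequality $a_{K_n}>r_{K_n}$ at the end of each block; it carries no extra ``non-interference margin.'' Worse, the interference you want to exclude must occur for most $f\in F_{K_N}$: regular closedness means the translates $f'+E_{K_N}$ overlap so heavily that they produce the interval $J$, consecutive elements of $F_{K_N}$ can lie within $q_N$ of one another, and \eqref{gf1} allows $q_N$ to exceed $m_{N+1}q_{N+1}$ (since $s_{N+1}-m_{N+1}+1>m_{N+1}$), so for a typical $f$ the candidate gap is covered by a nearby $f'$. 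The paper's proof of Theorem~\ref{nmg3} reveals the actual picture: each component of $I_{K_{n-1}}$ splits into exactly three components of $I_{K_n}$, so only selected $f$'s border $A$-gaps, and identifying them is more delicate than your sketch suggests. The clean repair is to drop the perfectness-of-boundary step altogether and cite Guthrie--Nymann once you have nonempty interior and infinitely many gaps, as the paper does.
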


\begin{proof}
Since $(a_n)$ is clearly decreasing for all $n\in \{K_{i-1},  \ldots, K_i\}$, $i \in \N$ and \eqref{gf2} implies that $a_{K_i} = m_iq_i>(m_{i+1}+k_{i+1}-1)q_{i+1}=a_{K_i+1}$, we obtain that $(a_n)$ is decreasing. Our proof will rest on an application of the Proposition \ref{charint} and hence we want to find long sequences in the sets $F_n$ with all consecutive terms sufficiently close. For that purpose, the following additional symbol will be quite useful:
$$
C_n\ :=\ \{m_nq_n,\,(m_n+1)q_n,\,\ldots,\,s_nq_n\,\} = \{a_{K_n+1}, \ldots, a_{K_{n+1}}\} \qquad \text{for $n\in\mathbb N$.}
$$
We are going to define inductively a special sequence $(D_n)_{n\in\mathbb N}$. First, we set $D_1:=C_1$. Clearly, $D_1\subset F_{K_1}$ and the distance of any two consecutive elements of $D_1$ equals to $q_1$. Moreover, $\min D_1=m_1q_1$ and $\max D_1=s_1q_1$.

Suppose now that a set $D_n$ has been defined and satisfies the following conditions
\begin{itemize}
\item[($\alpha_n$)] \ $D_n\,\subset \, F_{K_n}$;
\item[($\beta_n$)] \ the distance between any two consecutive points of $D_n$ does not exceed $q_n$;
\item[($\gamma_n$)] \ $\min D_n\,=\,\sum_{i=1}^nm_iq_i$ and $\max D_n\,=\,\sum_{i=1}^ns_iq_i$.
\end{itemize}
Then define $D_{n+1}:=D_n+C_{n+1}$. We see immediately that the set $D_{n+1}$ satisfies the conditions ($\alpha_{n+1}$) and ($\gamma_{n+1}$). In order to demonstrate ($\beta_{n+1}$), consider any two consecutive elements $f<g$ of $D_{n+1}$. Then $g=h_1+pq_{n+1}$ for some $h_1\in D_n$ and some $p\in\{m_{n+1},\ldots,\,s_{m+1}\}$. If $p>m_{n+1}$, then $h_1+(p-1)q_{n+1}\in D_{n+1}$ and it must be $h_1+(p-1)q_{n+1}\le f$, because $f<g$ are two consecutive elements of $D_{n+1}$. Hence $g-f\le g-h_1-(p-1)q_{n+1}=q_{n+1}$. If $p=m_{n+1}$, then from the fact that $g>f\ge\, \min D_{n+1}$, it follows $h_1>\min D_n$. Hence, by ($\beta_n$), we can find $h_2\in D_n$ such that $h_1-q_n\le h_2<h_1$. Of course,
\begin{equation}
\label{doda}
h_2\,+\,m_{n+1}q_{n+1}\ <\ h_1\,+\,m_{n+1}q_{n+1}\ =\ g.
\end{equation}
On the other hand,
\begin{align*}
h_2\,+\,s_{n+1}q_{n+1}\ &\ge h_1\,-\,q_n\,+\,s_{n+1}q_{n+1}\\
&=\ h_1+\,m_{n+1}q_{n+1}\,+\,(s_{n+1}-m_{n+1}+1)q_{n+1}\,-\,q_n\,-\,q_{n+1}\ \overset{\eqref{gf1}}{\ge}\ g\,-\,q_{n+1}.
\end{align*}
Therefore,
$$
h_2\,+\,\min C_{n+1}\ \overset{\eqref{doda}}{<}\ g\ \le\ h_2\,+\,\max C_{n+1}\,+\,q_{n+1}.
$$
Because any two consecutive elements of $h_2+C_{n+1}$ lie in the distance $q_{n+1}$, there is exactly one element of $h_2+C_{n+1}$ belonging to $[g-q_{n+1},\,g)$. This element belongs to $D_{n+1}$ which completes the proof of the property ($\beta_{n+1}$). Thus, by induction, the sets $D_n$ satisfying ($\alpha_n$), ($\beta_n$),($\gamma_n$) exist for every $n\in\mathbb N$. The elements of $D_n$ form an increasing finite sequence that is $q_n$-close and hence
$$
\Delta_{q_n}F_{K_n}\ \ge\ \max D_n\,-\,\min D_n\ =\ \sum_{i=1}^n(s_i-m_i)q_i.
$$
Since
$$
r_{K_n}\,=\,\sum_{i>n}(s_i+m_i)q_i\ >\ (s_{n+1}-m_{n+1}+1)q_{n+1}\ \overset{\eqref{gf1}}{\ge}\ q_n,
$$
it follows
$$
\Delta_{r_{K_n}}F_{K_n}\ \ge\ \sum_{i=1}^n(s_i-m_i)q_i
$$
 and finally
 $$
 \lim\limits_{n\to\infty}\Delta_{r_n}F_n\ =\ \lim\limits_{n\to\infty}\Delta_{r_{K_n}}F_{K_n}\ \ge\ \sum_{i=1}^\infty(s_i-m_i)q_i\ >\ 0.
 $$
Thus, the achievement set $A(a_n)$ contains an interval, by Proposition \ref{charint}. Since
 $$
 r_{K_n}\,=\,\sum_{i>n}(s_i+m_i)q_i\ \overset{\eqref{gf2}}{<}\ m_nq_n\,=\,a_{K_n}
 $$
 for all $n$, the set $A(a_n)$ has infinitely many gaps. Therefore, by the Guthrie-Nymann
%-Saenz 
Classification Theorem, $A(a_n)$ is a Cantorval.
 \end{proof}
Now, we will show that there exist GF series satisfying the assumptions of Theorem \ref{nmg1}. We will require from the series some additional properties which will be useful later.
 \begin{theorem}
\label{nmg2}
For every sequence $m=(m_n)$ of positive integers greater than 1 and for any sequence $(c_n)$ with $c_n>1$ and $\alpha:=\sup_n\frac{c_n}{m_n}<1$, there are sequences $k=(k_n)\in\mathbb N^\mathbb N$ with $k_n>m_n$ and $q=(q_n)\in(0,\,1)^\mathbb N$ such that the GF series $\sum a_n(m,k,q)$ is convergent, $A(a_n)$ is a Cantorval, and
\begin{equation}
\label{gwiazd}
(s_{n+1}+m_{n+1})q_{n+1}\ <\ c_nq_n \qquad \text{for all $n\in\mathbb N$},
\end{equation}
and
\begin{equation}
\label{2gwiazd}
c_n\ <\ (1-\alpha)(s_n+m_n) \qquad \text{for all $n\in\mathbb N$}.
\end{equation}
\end{theorem}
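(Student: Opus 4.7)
I would prove this by constructing the pair of sequences $(k_n)$ and $(q_n)$ inductively: first choosing each $k_n$ large enough (to force both \eqref{2gwiazd} and a compatibility constraint between GF$_1$ and \eqref{gwiazd}), then picking $q_n$ from a narrow window that these two constraints leave open. Since $s_n = s(m_n,k_n)$ grows quadratically in $k_n$ for fixed $m_n$, making $s_n + m_n$ exceed any prescribed bound is cheap.

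For the choice of $k_n$, I would require $k_n > m_n$ large enough that $s_n + m_n > c_n/(1-\alpha)$, which rewrites as \eqref{2gwiazd}, and additionally that $(c_n-1)(s_{n+1} - m_{n+1} + 1) > 2m_{n+1} - 1$, which since $c_n > 1$ holds once $k_{n+1}$ is sufficiently large. The second condition is exactly what is needed so that, with $q_n > 0$ fixed, the interval
$$\left[\frac{q_n}{s_{n+1}-m_{n+1}+1},\ \frac{c_n\,q_n}{s_{n+1}+m_{n+1}}\right)$$
is nonempty; its lower endpoint is forced by GF$_1$ and its upper endpoint by \eqref{gwiazd}. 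I would set $q_1 \in (0,1)$ arbitrarily and pick each $q_{n+1}$ from this interval. After also arranging $s_{n+1} + m_{n+1} > c_n$ (again, cheap), the upper bound forces $q_{n+1} < q_n$, so $(q_n)$ is a decreasing sequence in $(0,1)$.

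With the construction in place, GF$_1$, \eqref{gwiazd}, \eqref{2gwiazd} and $k_n > m_n$ are immediate, and it remains only to verify convergence and GF$_2$ and then invoke Theorem \ref{nmg1}. Writing $u_n := (s_n + m_n)q_n$ (the $n$-th block sum, so $\sum a_n = \sum_n u_n$) and $v_n := c_n q_n$, the chain
$$u_{n+1} \;<\; v_n \;\leq\; \alpha\, m_n q_n \;\leq\; \alpha\, u_n,$$
which uses \eqref{gwiazd} and $c_n/m_n \leq \alpha$, gives geometric decay of $(u_n)$ and hence convergence of $\sum a_n$. Combining \eqref{gwiazd} at index $n$ with \eqref{2gwiazd} at index $n+1$ upgrades this to the sharper $v_{n+1} < (1-\alpha)v_n$, whence
$$\sum_{i>n}(s_i+m_i)q_i \;=\; \sum_{j\geq 1} u_{n+j} \;<\; \sum_{j\geq 0} v_{n+j} \;<\; \frac{v_n}{\alpha} \;=\; \frac{c_n q_n}{\alpha} \;\leq\; m_n q_n,$$
the last step again using $c_n/m_n \leq \alpha$. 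This is GF$_2$, so by Theorem \ref{nmg1}, $A(a_n)$ is a Cantorval.

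The main obstacle is getting GF$_2$ with the right constant. Using only \eqref{gwiazd} yields $u_{n+1} < \alpha u_n$, which is enough for convergence but too weak to dominate the tail by $m_n q_n$ once $\alpha \geq 1/2$. The essential trick is that \eqref{2gwiazd} lets one transport the contraction from $u$ to $v = cq$ with ratio $1-\alpha$ rather than $\alpha$; the hypothesis $c_n/m_n \leq \alpha$ then absorbs the factor $1/\alpha$ coming out of the geometric series, which is exactly why the constant $1-\alpha$ appearing in \eqref{2gwiazd} is the correct one.
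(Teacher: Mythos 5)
Your proof is correct and follows essentially the same route as the paper's: pick $k_n$ large enough to make the window $\bigl[\,q_n/(s_{n+1}-m_{n+1}+1),\ c_nq_n/(s_{n+1}+m_{n+1})\,\bigr)$ nonempty and to force \eqref{2gwiazd}, place $q_{n+1}$ in that window so that (GF$_1$) and \eqref{gwiazd} hold, then combine \eqref{gwiazd} with \eqref{2gwiazd} to obtain a geometric contraction with ratio $1-\alpha$ and absorb the resulting $1/\alpha$ factor via $c_n\le\alpha m_n$, giving (GF$_2$). The paper fixes $q_{n+1}$ at the left endpoint of the window and phrases the contraction directly on $u_n=(s_n+m_n)q_n$ rather than on $v_n=c_nq_n$, but these are cosmetic variations of the same argument.
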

\begin{proof}
Take $q_1>0$. Choose $k_1>m_1$ such that
\begin{equation*}
c_{1}\ <\ (1-\alpha)\bigl(s(m_{1},k_1)+m_{1}).
\end{equation*}
We can do it because $s(m_{1},k)\nearrow +\infty$ as $k\to\infty$.

Now, suppose that $l$ is a positive integer such that $k_1,\,\ldots,\,k_l$ and $q_1,\,\ldots,\,q_l$ have been chosen and satisfy \eqref{gwiazd} for all $n<l$ and \eqref{2gwiazd} for all $n\le l$. Since $s(m_{l+1},k)\nearrow +\infty$ as $k\to\infty$, there is $T\in\mathbb N$, $T>m_{l+1}$ such that
\begin{equation}
\label{3gwiazd}
\bigl(s(m_{l+1},\,T)-m_{l+1}+1\bigr)\ >\ \frac{2m_{l+1}}{c_l-1}
\end{equation}
and
\begin{equation*}
c_{l+1}\ <\ (1-\alpha)\bigl(s(m_{l+1},\,T)+m_{l+1}).
\end{equation*}
Define $k_{l+1}:=T$ and $q_{l+1}:=\frac{q_l}{s_{l+1}-m_{l+1}+1}$ where $s_{l+1}=s(m_{l+1},k_{l+1})$. Then
$$
c_lq_l=c_lq_{l+1}(s_{l+1}-m_{l+1}+1)\,\overset{\eqref{3gwiazd}}{>}\,(s_{l+1}-m_{l+1}+1)q_{l+1}+2m_{l+1}q_{l+1}>(s_{l+1}+m_{l+1})q_{l+1},
$$
that is, \eqref{gwiazd} holds for $n=l$. Thus, by induction, two sequences $k=(k_n)$ and $q=(q_n)$ have been defined and they satisfy \eqref{gwiazd} and \eqref{2gwiazd}.

We are now going to show that $\sum a_n(m,k,q)$ is a convergent GF series satisfying \eqref{gf1} and \eqref{gf2}. Clearly,
$$
\sum_{n=1}^\infty a_n\ =\ \sum_{n=1}^\infty\sum_{l=K_{n-1}+1}^{K_n} a_l\ \ =\ \ \sum_{n=1}^\infty(s_n+m_n)q_n
$$
and
\begin{equation} \label{1-alfa}
\frac{(s_{n+1}+m_{n+1})q_{n+1}}{(s_n+m_n)q_n}\ \overset{\eqref{gwiazd}}{<}\ \frac{c_n}{s_n+m_n}\ \overset{\eqref{2gwiazd}}{<}\ 1\,-\alpha
\end{equation}
for all $n$ which proves the convergence of $\sum a_n$. Observe that \eqref{gf1} is satisfied, by the definition of the sequence $(q_{n})$. Furthermore,
\begin{align*}
\sum_{i>n}(s_i+m_i)q_i\ \overset{\eqref{1-alfa}}{<}\ \sum_{j=0}^\infty(s_{n+1}+m_{n+1})q_{n+1}(1-\alpha)^j\ =\ \frac1\alpha(s_{n+1}+m_{n+1})q_{n+1}\ \overset{\eqref{gwiazd}}{<}\frac{c_n}{\alpha}q_n\ \le \ m_nq_n.
\end{align*}
Therefore, \eqref{gf2} holds as well and hence, by Theorem \ref{nmg1}, $A(a_n)$ is a Cantorval.
\end{proof}
In all of the known achievable Cantorvals their Lebesgue measure is equal to the measure of their interior. It also holds for Cantorvals satisfying the assumptions of Theorem \ref{nmg1}.
\begin{theorem}
\label{nmg3}
If $\sum a_n(m,k,q)$ is a convergent GF series satisfying the conditions \eqref{gf1} and \eqref{gf2}, then the Lebesgue measure of the Cantorval $A(a_n)$ equals to the measure of its interior.
\end{theorem}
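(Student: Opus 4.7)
Plan:

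I plan to reduce the claim to a recursive identity for the measure of the boundary of $A$. Writing $A_n := E_{K_n}$ for the achievement set of the remainder series $\sum_{i > K_n} a_i$ (so that $A_0 = A$), each $A_n$ is again a Cantorval in $[0, r_{K_n}]$ by applying Theorem \ref{nmg1} to the tail, which is itself a convergent GF series satisfying \eqref{gf1} and \eqref{gf2}. The key structural input is \eqref{gf2}, namely $m_n q_n > r_{K_n}$, which guarantees that the three translated pieces in
\[A_{n-1} \;=\; A_n \;\sqcup\; (B_n + A_n) \;\sqcup\; \bigl((s_n + m_n)q_n + A_n\bigr)\]
are pairwise separated by open intervals of positive length; hence each piece is clopen in $A_{n-1}$, and the Lebesgue measure, the measure of the interior, and the measure of the boundary all decompose additively. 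Writing $\delta_n := \lambda(\text{Fr}\,A_n)$ and $\epsilon_n := \lambda(\text{Fr}(B_n + A_n))$, I obtain the recursion $\delta_{n-1} = 2\delta_n + \epsilon_n$.

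Iterating this recursion $n$ times yields $\delta_0 = 2^n \delta_n + \sum_{k=1}^n 2^{k-1}\epsilon_k$. Since $\delta_n \le \lambda(A_n) \le r_{K_n}$, and condition \eqref{gf2} together with $m_n \ge 2$, $k_n \ge m_n + 1 \ge 3$ gives the geometric estimate $r_{K_n}/r_{K_{n-1}} < m_n/(s_n + 2m_n) \le 2/11$ (the worst ratio being achieved at $m_n = 2$, $k_n = 3$), the remainder term $2^n \delta_n \le (4/11)^n r_{K_0}$ tends to $0$. The theorem therefore reduces to the key lemma that $\epsilon_k = 0$ for every $k \ge 1$.

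To prove this lemma I observe that the open set $B_k + \text{int}\,A_k = \bigcup_{b \in B_k}(b + \text{int}\,A_k)$ is contained in $\text{int}(B_k + A_k)$, so that
\[\epsilon_k \;\le\; \lambda(B_k + \text{Fr}\,A_k),\]
and it suffices to show the right-hand side vanishes. I would do this by unfolding $\text{Fr}\,A_k$ via the analogous disjoint decomposition for the boundary of each tail, $\text{Fr}\,A_k = \text{Fr}\,A_{k+1} \sqcup \text{Fr}(B_{k+1} + A_{k+1}) \sqcup \bigl((s_{k+1}+m_{k+1})q_{k+1} + \text{Fr}\,A_{k+1}\bigr)$, applied across all subsequent scales, and then estimating $\lambda(B_k + \text{Fr}\,A_k)$ using the overlap structure among consecutive translates $b + \text{Fr}\,A_k$ forced by $q_k < r_{K_k}$ (itself a consequence of \eqref{gf1}). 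The main obstacle is precisely this sharpening: the naive subadditivity bound $\lambda(B_k + \text{Fr}\,A_k) \le |B_k|\,\delta_k$ only yields $\delta_{k-1} \le (s_k - m_k + 3)\delta_k$, whose iterated product grows faster than $r_{K_n}$ decays; closing the recursion therefore requires the finer overlap estimate showing that, once all scales $m \ge k$ are accounted for via the geometric shrinkage of the $r_{K_m}$, the union $\bigcup_{b \in B_k}(b + \text{Fr}\,A_k)$ collapses onto a set of Lebesgue measure zero.
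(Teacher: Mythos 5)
Your recursive setup is valid: the three-block decomposition
$A_{n-1} = A_n \sqcup (B_n + A_n) \sqcup \bigl((s_n+m_n)q_n + A_n\bigr)$
is indeed a partition into clopen pieces separated by open gaps (because $m_n q_n > r_{K_n}$ by \eqref{gf2}), the recursion $\delta_{n-1} = 2\delta_n + \epsilon_n$ on boundary measures is correct, and the ratio estimate $r_{K_n}/r_{K_{n-1}} < m_n/(s_n+2m_n) \le 2/11$ does force $2^n\delta_n \to 0$. You have therefore correctly reduced the theorem to the single claim $\epsilon_k = \lambda\bigl(\text{Fr}(B_k + A_k)\bigr) = 0$ for all $k$.

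But that claim is exactly where the proposal stops short of being a proof. You observe (correctly) that the naive subadditivity bound $\epsilon_k \le |B_k|\,\delta_k$ gives only $\delta_{k-1}\le (s_k-m_k+3)\delta_k$, whose iterated product outpaces the decay of $r_{K_n}$; and you then explicitly defer to a ``finer overlap estimate'' that is never carried out. Note that proving $\epsilon_k=0$ is essentially the same type of problem as the original theorem, just localized to the middle block: $B_k+A_k$ is a union of overlapping translates $jq_k+A_k$ (the overlap coming from $q_k<r_{K_k}$), and showing its boundary is null requires measuring precisely how much of it is swallowed by the interior. Indeed, $\text{Fr}(B_k+A_k)\subset\text{Fr}\,A_{k-1}$, so the lemma follows at once from $\lambda(\text{Fr}\,A_{k-1})=0$, which is the theorem for the tail series --- a circularity unless a genuine estimate is supplied. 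The paper avoids this by going in the opposite direction: instead of bounding the boundary from above, it identifies, for each component interval of each iterate $I_{K_n}$, a concentric $A$-interval of explicit length $\sum_{i>n}(s_i-m_i)q_i$, sums these lower bounds for $\lambda(\text{int}\,A)$ with the correct multiplicities $2\cdot 3^{n-2}$, and shows the total equals $\lambda(A)$ (computed by subtracting the $A$-gap lengths from the ambient interval). Supplying that explicit enumeration of $A$-intervals, or an equivalent direct computation of how much of $B_k+A_k$ is covered by interiors of the overlapping translates, is what is missing from your plan.
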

\begin{proof}
If the series $\sum a_n(m,k,q)$ satisfies the assumptions of our theorem, then $A(a_n)$ is a Cantorval by the virtue of Theorem \ref{nmg1}. Observe that $A$-gaps of order $k$ exist if and only if $k=K_n$ for some $n\in\mathbb N$. Moreover, $A=\bigcap_{n\in\N}I_{K_n}$. Each $I_{K_n}$ is a multi-interval set and every component of $I_{K_n}$ divides into $3$ components of $I_{K_{n+1}}$ with distance between them equal to $m_{n+1}q_{n+1}-r_{K_{n+1}}$. Therefore, the set $I_{K_{n-1}}\setminus I_{K_n}$, that is, the union of all $A$-gaps of order $K_n$, consists of $2\cdot3^{n-1}$ open intervals, each of length $m_nq_n-r_{K_n}$. Thus,
\begin{align}
\label{cruz}
\lambda (A)\ =\ &\lambda\left([0,\,\sum_{i=1}^\infty(m_i+s_i)q_i]\setminus\bigl([0,\,\sum_{i=1}^\infty(m_i+s_i)q_i]\setminus A\bigr)\right)\\
&=\ \sum_{i=1}^\infty(m_i+s_i)q_i\ -\ \lambda\left(\bigcup_{n=1}^\infty\bigl(I_{K_{n-1}}\setminus I_{K_n}\bigr)\right) \notag \\
&=\ \sum_{i=1}^\infty(m_i+s_i)q_i\ -\ \sum_{n=1}^\infty2\cdot3^{n-1}\bigl(m_nq_n\ -\ \sum_{i=n+1}^\infty(m_i+s_i)q_i\bigr). \notag
\end{align}
Each iteration $I_{K_n}$ for $n\in\mathbb N_0$ has a central interval of the form $\left[\sum_{i=1}^nm_iq_i,\,\sigma-\sum_{i=1}^nm_iq_i\right]$ (with the convention $\sum_{i=1}^0m_iq_i=0$). The intersection of all these central intervals is the interval $\left[\sum_{i=1}^\infty m_iq_i,\,\sum_{i=1}^\infty s_iq_i\right]$ which is a connectivity component of $A$. The iteration $I_{K_1}$ consists of three intervals, the central of which is involved in producing the central interval of $A(a_n)$. The other two intervals of $I_{K_1}$: $B:=\bigl[0,\,\sum_{i=2}^\infty(m_i+s_i)q_i\bigr]$ and $C:=\bigl[(m_1+s_1)q_1,\,(m_1+s_1)q_1+\sum_{i=2}^\infty(m_i+s_i)q_i\bigr]$ give rise to two $A$-intervals $\bigl[\sum_{i=2}^\infty m_iq_i,\,\sum_{i=2}^\infty s_iq_i\bigr]$ and $\bigl[(m_1+s_1)q_1+\sum_{i=2}^\infty m_iq_i,\,(m_1+s_1)q_1+\sum_{i=2}^\infty s_iq_i\bigr]$ concentric with $B$ and $C$, respectively.

The iteration $I_{K_2}$ consists of nine disjoint closed intervals. Three of them are concentric with component intervals of $I_{K_1}$. Each of the remaining six is concentric with an $A$-interval of length $\sum_{i=3}^\infty(s_i-m_i)q_i$. Continuing in this fashion, we see that for any $n\in\mathbb N_0$, each component interval of $I_{K_n}$ is concentric with an $A$-interval. Let $\mathcal{P}$ be the family of all $A$-intervals concentric with at least one component of at least one iteration $I_{K_n}$, $n\in\mathbb N_0$. For an interval $P\in\mathcal{P}$ let us define
$$
n_P-1\ :=\ \min\bigl\{l:\ I_{k_l} \ \text{has a component concentric with $P$}\,\bigr\}.
$$
Then $|P|=\sum_{i=n_P}^\infty(s_i-m_i)q_i$. Moreover, given an positive integer $n\ge2$, there are exactly $2\cdot3^{n-2}$ $A$-intervals $P$ for which $n_P=n$ and there is exactly one $A$-interval $P$ for which $n_P=1$. Thus,
\begin{align*}
\lambda(\text{int}\,A)\ &\ge\ \sum_{n=1}^\infty\,\sum_{\substack{P\in\text{$A$-intervals}\\n_P=n}}\ =\ \sum_{i=1}^\infty(s_i-m_i)q_i\ +\ \sum_{n=2}^\infty 2\cdot3^{n-2}\sum_{i=n}^\infty(s_i-m_i)q_i\\
&=\ \sum_{i=1}^\infty(s_i+m_i)q_i\ -\ 2\left(\sum_{i=1}^\infty m_iq_i\ -\ \sum_{n=2}^\infty3^{n-2}\sum_{i=n}^\infty(s_i-m_i)q_i\right)\\
&=\ \sum_{i=1}^\infty(s_i+m_i)q_i\ -\ 2\left(\sum_{n=1}^\infty m_nq_n\ +\ \sum_{n=1}^\infty2\cdot3^{n-1}\sum_{i=n+1}^\infty m_iq_i\ -\ \sum_{n=1}^\infty 3^{n-1}\sum_{i=n+1}^\infty(s_i+m_i)q_i\right)\\
&=\ \sum_{i=1}^\infty(s_i+m_i)q_i\ -\ 2\left(\sum_{n=1}^\infty m_nq_n\ +\ \sum_{i=2}^\infty m_iq_i\sum_{n=2}^i2\cdot3^{n-2}\ -\ \sum_{n=1}^\infty 3^{n-1}\sum_{i=n+1}^\infty(s_i+m_i)q_i\right)\\
&=\ \sum_{i=1}^\infty(s_i+m_i)q_i\ -\ 2\left(\sum_{n=1}^\infty m_nq_n\ +\ \sum_{i=2}^\infty (3^{i-1}-1)m_iq_i\ -\ \sum_{n=1}^\infty 3^{n-1}\sum_{i=n+1}^\infty(s_i+m_i)q_i\right)\\
&=\ \sum_{i=1}^\infty(s_i+m_i)q_i\ -\ 2\left(m_1q_1\ +\ \sum_{n=2}^\infty3^{n-1}m_nq_n\ -\ \sum_{n=1}^\infty 3^{n-1}\sum_{i=n+1}^\infty(s_i+m_i)q_i\right)\\
&=\ \sum_{i=1}^\infty(s_i+m_i)q_i\ -\ 2\left(\sum_{n=1}^\infty 3^{n-1}\bigl(m_nq_n\ -\ \sum_{i=n+1}^\infty(s_i+m_i)q_i\bigr)\right)\ \ \overset{\eqref{cruz}}{=} \ \lambda (A).
\end{align*}
\end{proof}

\vspace{.2in}

\section{Algebraic sums of achievable sets}
The main goal of this section is to check what types of sets we can obtain as algebraic sums of various combinations of achievement sets. In particular, we are interested what we can get as a sum of two or more copies of the same Cantorval or a Cantor set.

We use the following result, which comes from \cite{BBFS}.

\begin{theorem}\label{sufficientcantorval}
Let $a_1\geq a_2\geq \ldots\geq a_m$ be positive integers such that $a_i-a_{i+1} \leq a_m$ for $i \in \{1,2,\ldots,m-1\}$. Assume that there exist positive integers $n_0$ and $r$ such that $\Sigma\{a_1,a_2,\ldots,a_m\}\supset\{n_0,n_0+1,\ldots,n_0+r\}$. If $q\geq\frac{1}{r+1}$, then $A(a_1,a_2,\ldots,a_m;q)$ has a nonempty interior. If $q<\frac{a_m}{\sum_{i=1}^{m}a_i+a_m}$, then $A(a_1,a_2,\ldots,a_m;q)$ is not a finite union of intervals. Consequently, if $\frac{1}{r+1}\leq q <\frac{a_m}{\sum_{i=1}^{m}a_i+a_m}$, then $A(a_1,a_2,\ldots,a_m;q)$ is a Cantorval.
\end{theorem}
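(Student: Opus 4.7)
The proof naturally splits along the two inequalities on $q$, and the conclusion follows from the Guthrie--Nymann classification. Set $\sigma := \sum_{i=1}^m a_i$ and $F := \Sigma\{a_1, \ldots, a_m\}$. Because $(a_1, \ldots, a_m;\,q)$ is multigeometric, grouping the terms of the series into blocks of size $m$ gives the self-similar description
$$
A(a_1, \ldots, a_m;\,q) \;=\; \Bigl\{\sum_{k=1}^\infty q^k f_k : f_k \in F \text{ for all } k \in \N\Bigr\},
$$
so $A$ is the set of all $q$-expansions with digit alphabet $F$. The two halves of the theorem correspond to two features of this representation: density (to produce an interior) and sparseness near the bottom of the digit set (to force gaps).

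For the first inequality, the inclusion $\{n_0, n_0+1, \ldots, n_0+r\} \subset F$ yields
$$
A \;\supset\; \frac{q n_0}{1-q} \;+\; \sum_{k=1}^\infty q^k L, \qquad L := \{0, 1, \ldots, r\},
$$
and the plan is to show that $\sum_{k=1}^\infty q^k L = [0,\, qr/(1-q)]$ whenever $q \geq 1/(r+1)$. This is a greedy-expansion argument: given $x$ in the target interval, set $a_1 := \min(\lfloor x/q \rfloor,\,r)$, so the remainder $x - q a_1$ lies in $[0,\,q]$; the tail $\sum_{k \geq 2} q^k L$ covers $[0,\,q^2 r/(1-q)]$ provided the same statement holds one level down, and the admissibility condition $q \leq q^2 r/(1-q)$ is exactly $q \geq 1/(r+1)$. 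Rescaling by $1/q$ and iterating produces digits $a_k \in L$ with $\sum q^k a_k = x$, and hence $A$ contains a nondegenerate interval.

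For the second inequality, the plan is to exhibit infinitely many pairwise disjoint gaps of $A$. For each $n \in \N$ consider the open interval
$$
J_n \;:=\; \Bigl(\frac{\sigma q^{n+1}}{1-q},\; a_m q^n\Bigr),
$$
which is nonempty precisely when $a_m (1-q) > \sigma q$, i.e., when $q < a_m/(\sigma + a_m)$. To see that $J_n \cap A = \emptyset$, take any $x = \sum_{k \geq 1} q^k f_k \in A$: if $f_k = 0$ for every $k \leq n$, then $x \leq \sigma q^{n+1}/(1-q)$ lies at or below $J_n$; otherwise, letting $k_0 \leq n$ be minimal with $f_{k_0} > 0$ and using that the least positive element of $F$ equals $a_m$, we obtain $x \geq q^{k_0} a_m \geq q^n a_m$, placing $x$ at or above $J_n$. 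Since $a_m \leq \sigma/(1-q)$, the intervals $J_n$ are pairwise disjoint, so $A$ has infinitely many gaps and cannot be a finite union of intervals. Under the combined hypothesis $1/(r+1) \leq q < a_m/(\sigma + a_m)$, the set $A$ has nonempty interior and infinitely many gaps, and the Guthrie--Nymann classification forces $A$ to be a Cantorval. The main technical obstacle is handling the boundary case $\lfloor x/q \rfloor = r+1$ in the greedy step: here one must cap the digit at $r$ and verify that the tail budget still accommodates the enlarged remainder, which is the single calculation in which the threshold $q = 1/(r+1)$ is tight.
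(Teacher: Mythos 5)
The paper states Theorem~\ref{sufficientcantorval} without proof, attributing it to \cite{BBFS}, so there is no in-paper argument to compare against. Your reconstruction is correct and is essentially the standard one used for multigeometric series: rewriting $A$ as the set of $q$-expansions over the digit alphabet $F=\Sigma\{a_1,\ldots,a_m\}$, obtaining a subinterval from the embedded arithmetic progression via a greedy/Kakeya-type argument (the inequality $q \geq 1/(r+1)$ being exactly the Kakeya interval condition for the sequence $q,\ldots,q,q^2,\ldots,q^2,\ldots$ with each power repeated $r$ times), and producing the infinitely many pairwise disjoint gaps $J_n=(\sigma q^{n+1}/(1-q),\,a_m q^n)$ from the smallest positive digit $a_m$. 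The computations for the nonemptiness and disjointness of the $J_n$ check out, and the appeal to the Guthrie--Nymann classification is the right way to close. One small remark: the hypothesis $a_i - a_{i+1}\leq a_m$ is never invoked in your argument; it is in fact redundant once the inclusion $\Sigma\{a_1,\ldots,a_m\}\supset\{n_0,\ldots,n_0+r\}$ is explicitly assumed (in the original source that difference condition is what guarantees such a run of consecutive integers exists). Also, for completeness you might add one line noting that both endpoints of each $J_n$ lie in $A$ (take $f_n=a_m$, resp.\ $f_k=\sigma$ for $k>n$ and $f_k=0$ otherwise), so the $J_n$ really are gaps of $A$ rather than merely intervals missing $A$.
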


\begin{proposition}\label{sumoftwocantorvals}
There exists an achievable Cantorval $D$ such that $D+D$ is also a Cantorval.
\end{proposition}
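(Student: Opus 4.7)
The plan is to exhibit an explicit multigeometric $D$ and apply Theorem \ref{sufficientcantorval} twice: once to $D$ and once to $D+D$. The key observation is that if $D=A(a_1,\ldots,a_m;q)$, then
$$
D+D\ =\ A(a_1,a_1,a_2,a_2,\ldots,a_m,a_m;q),
$$
which is again a multigeometric series with the same ratio $q$, the same minimum term $a_m$, and the same ``gap'' differences $a_i-a_{i+1}$, but with total sum $2S$ in place of $S:=\sum_i a_i$. Hence Theorem \ref{sufficientcantorval} is directly applicable to $D+D$ as well.

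To make both $D$ and $D+D$ Cantorvals at once, I need a single value $q$ satisfying the joined window
$$
\frac{1}{r+1}\ \le\ q\ <\ \frac{a_m}{2S+a_m},
$$
where $\{n_0,n_0+1,\ldots,n_0+r\}\subset\Sigma\{a_1,\ldots,a_m\}$. The lower bound guarantees nonempty interior for $D$ (and then trivially for $D+D$, whose supporting run is even longer), while the upper bound is precisely the threshold below which $D+D$ is not a finite union of intervals, which in turn implies $D$ is not either. This window is nonempty exactly when $r\cdot a_m>2S$, so the task reduces to finding a nonincreasing tuple $(a_1,\ldots,a_m)$ of positive integers with $a_i-a_{i+1}\le a_m$ whose subset-sum set contains a run of length $r+1$ with $r\,a_m>2S$.

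A small-scale search shows that the very smallest candidates like $(4,3,2)$ or $(5,4,3,2)$ fail the inequality $r\,a_m>2S$, but the arithmetic block $(7,6,5,4,3)$ works: one checks by direct enumeration that $\Sigma\{7,6,5,4,3\}\supset\{3,4,\ldots,22\}$, giving $r=19$, and with $S=25$, $a_m=3$ we have $r\,a_m=57>50=2S$. The joined window then becomes $[1/20,\,3/53)$, which is nonempty; taking $q=1/20$ (or any $q$ in this interval) and verifying the hypotheses of Theorem \ref{sufficientcantorval} for both the tuple $(7,6,5,4,3;q)$ and its doubled version $(7,7,6,6,5,5,4,4,3,3;q)$ completes the construction.

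The main obstacle, and essentially the only delicate point, is choosing the right tuple: the inequality $r\,a_m>2S$ forces the run in $\Sigma\{a_1,\ldots,a_m\}$ to be rather long relative to the total mass, which rules out the smallest examples. Once a suitable tuple is in hand, verifying the two applications of Theorem \ref{sufficientcantorval} is completely routine arithmetic.
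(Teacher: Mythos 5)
Your proof is correct and follows essentially the same route as the paper: apply Theorem~\ref{sufficientcantorval} both to a multigeometric tuple and to its doubled copy, then intersect the two admissible $q$-windows. The only difference is the choice of tuple — $(7,6,5,4,3)$ here versus $(14,13,\ldots,7)$ in the paper — together with your tidy observation that the run in $\Sigma$ for the doubled tuple is automatically at least as long, so only the upper bound changes.
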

\begin{proof}
Consider $D_q=A(14,13,12,11,10,9,8,7;q)$. We have \\$\Sigma \{7,8,9,10,11,12,13,14\}=\{0,7,8,9,\ldots,76,77,84\}$. By Theorem \ref{sufficientcantorval}, the set $D_q$ is a Cantorval for all $\frac{1}{71}\leq q <\frac{1}{13}$.

We have $D_q+D_q=A(14,14,13,13,12,12,11,11,10,10,9,9,8,8,7,7;q)$ and $$\Sigma \{14,14,13,13,12,12,11,11,10,10,9,9,8,8,7,7\}=\{0,7,8,9,\ldots,160,161,168\}.$$ Using again Theorem \ref{sufficientcantorval}, we get that $D_q+D_q$ is a Cantorval for every $\frac{1}{155}\leq q <\frac{1}{25}$.
We have proved that both sets $D_q$ and $D_q+D_q$ are Cantorvals for each $\frac{1}{71}\leq q <\frac{1}{25}$.
\end{proof}

\begin{proposition}\label{sumoftwocantorvals2}
Let $k\in\mathbb{N}$. There exists an achievable Cantorval $D$ such that $\underbrace{D+D+\ldots+D}_{k \ \text{times}}$ is also a Cantorval.
\end{proposition}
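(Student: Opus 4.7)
The plan is to adapt Proposition \ref{sumoftwocantorvals} by allowing the multigeometric data defining $D$ to depend on $k$, preserving the multigeometric structure so that Theorem \ref{sufficientcantorval} applies simultaneously to $D$ and to the $k$-fold Minkowski sum $\underbrace{D+\cdots+D}_{k\text{ times}}$. Given $k$, I take $b := k + 2$, the multiset $M_b := \{b, b+1, \ldots, 2b\}$ (which has $b+1$ elements and total sum $S := \tfrac{3b(b+1)}{2}$), and define $D = D_q := A(2b,\, 2b-1,\, \ldots,\, b;\, q)$. Iterating the union-of-sequences observation from Section~1, the $k$-fold sum $D + \cdots + D$ is the multigeometric achievement set with coefficient multiset $kM_b$ (each value in $M_b$ repeated $k$ times) and the same ratio $q$.

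The first technical step is the subset-sum identity $\Sigma(M_b) \supset \{b, b+1, \ldots, S-b\}$. This is the classical consecutive-integers statement, proved by showing that for each $j \in \{1, \ldots, b\}$ the $j$-element subsets of $M_b$ realize every integer in the range $[jb + \tfrac{j(j-1)}{2},\, 2bj - \tfrac{j(j-1)}{2}]$, and that the ranges for consecutive values of $j$ overlap since $(j-1)(b-j) \ge 0$ on $1 \le j \le b$. From this I would deduce by induction on $k$ that $\Sigma(kM_b) \supset \{b, b+1, \ldots, kS - b\}$; the inductive step uses $\Sigma((k+1)M_b) = \Sigma(kM_b) + \Sigma(M_b)$ together with the easy inequality $S \ge 3b - 1$ to show that the relevant shifted intervals $\{b,\ldots,kS-b\}$, $\{2b,\ldots,(k+1)S-2b\}$ and $\{S+b,\ldots,(k+1)S-b\}$ glue into $\{b, b+1, \ldots, (k+1)S - b\}$.

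With these subset-sum bounds in hand, Theorem \ref{sufficientcantorval} applied to $D$ (smallest coefficient $b$, sum $S$, consecutive $\Sigma$-run of length $S - 2b + 1$) says that $D$ is a Cantorval for every $q \in \bigl[\tfrac{1}{S - 2b + 1},\, \tfrac{b}{S + b}\bigr)$; applied to the $k$-fold sum (smallest $b$, sum $kS$, consecutive run of length $kS - 2b + 1$) it says that $D + \cdots + D$ is a Cantorval for every $q \in \bigl[\tfrac{1}{kS - 2b + 1},\, \tfrac{b}{kS + b}\bigr)$. Intersecting the two windows gives
$$
\tfrac{1}{S - 2b + 1} \;\le\; q \;<\; \tfrac{b}{kS + b},
$$
which is nonempty iff $k < b - \tfrac{2b^2}{S} = \tfrac{b(3b-1)}{3(b+1)}$. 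With $b = k + 2$ this reduces to $2k + 10 > 0$, so a valid $q$ exists, and the corresponding $D$ is the required achievable Cantorval.

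The only routine-but-nontrivial step is the consecutive-integers subset-sum identity; everything else is arithmetic. The conceptual point---and the reason the fixed Cantorval of Proposition \ref{sumoftwocantorvals} cannot handle arbitrary $k$---is that the upper endpoint $\tfrac{b}{kS + b}$ of the joint window tends to $0$ as $k \to \infty$ if $b$ stays bounded; allowing $b$ to grow linearly with $k$ is exactly what neutralizes this collapse.
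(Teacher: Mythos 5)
Your proof is correct and follows essentially the same route as the paper: the paper takes $D_q=A(2m,\ldots,m;q)$ for $m>\tfrac{1+3k+\sqrt{9k^2+42k+1}}{6}$, computes the consecutive run in $\Sigma\{m,\ldots,2m\}$ and in the $k$-fold repeated multiset, and applies Theorem~\ref{sufficientcantorval} to both to intersect the $q$-windows, which is identical to your calculation (your bounds $\tfrac{1}{S-2b+1}=\tfrac{2}{3b^2-b+2}$ and $\tfrac{b}{kS+b}=\tfrac{2}{3bk+3k+2}$ match the paper's with $m=b$). The only cosmetic difference is that you instantiate the explicit admissible choice $b=k+2$ up front and sketch an inductive proof of the consecutive-integers identity that the paper merely asserts.
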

\begin{proof}
Take natural $m > \frac{1+3k+\sqrt{9k^2+42k+1}}{6}$.
Consider $D_q=A(2m,2m-1,\ldots,m+2,m+1,m;q)$ and $B = \{m,m+1, \ldots, 2m\}.$ We have $\Sigma B=\{0,m,m+1,m+2,\ldots,\frac{3m^2+m}{2}-1,\frac{3m^2+m}{2},\frac{3m^2+3m}{2}\}$. By Theorem \ref{sufficientcantorval}, the set $D_q$ is a Cantorval for all $\frac{2}{3m^2-m+2}\leq q <\frac{2}{3m+5}$.

We have $\underbrace{D_q+\ldots+D_q}_{k \ \text{times}}=A(\underbrace{2m,\ldots,2m}_{k \ \text{times}},\underbrace{2m-1,\ldots,2m-1}_{k \ \text{times}},\ldots,\underbrace{m,\ldots,m}_{k \ \text{times}};q)$ and 
$$\Sigma \{\underbrace{m,\ldots,m}_{k \ \text{times}},\ldots,\underbrace{2m,\ldots,2m}_{k \ \text{times}}\} =\{0,m,m+1,m+2,\ldots,\frac{3m^2k+3mk-2m}{2}-1,\frac{3m^2k+3mk-2m}{2},\frac{3m^2k+3mk}{2}\}.$$
From Theorem \ref{sufficientcantorval} we know that $\underbrace{D_q+\ldots+D_q}_{k \ \text{times}}$ is a Cantorval for $\frac{2}{3m^2k+3mk-4m+2}\leq q <\frac{2}{3mk+3k+2}$. Thus, both $D_q$ and $\underbrace{D_q+\ldots+D_q}_{k \ \text{times}}$ are Cantorvals for $\frac{2}{3m^2-m+2}\leq q <\frac{2}{3mk+3k+2}$. Since $m > \frac{1+3k+\sqrt{9k^2+42k+1}}{6}$, we have $3m^2-m+2 > 3mk+3k+2$, therefore there is $q \in [\frac{2}{3m^2-m+2}, \frac{2}{3mk+3k+2}).$
\end{proof}

From \cite[Theorem 6.4.]{GM23} (and its proof) we obtain the following theorem.
\begin{proposition}\label{CantorvalCantorval}
For any achievable Cantorval $D$ there exists an achievable Cantor set $C$ such that $D+C$ is a Cantorval.
%\begin{proof}
%\cite{GM23}, Theorem 6.4.
%\end{proof}
\end{proposition}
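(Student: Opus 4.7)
The proposition follows from the construction in \cite[Theorem~6.4]{GM23}; here I outline the approach. Write $D = A(a_n)$ for a positive nonincreasing sequence summing to a finite value, and construct $C$ as the achievement set $A(c_n)$ of a fast-convergent sequence $(c_n)$ with $c_n > \sum_{k>n}c_k$ for all $n$, so that by Kakeya's criterion $C$ is automatically a Cantor set. The goal is to exhibit such $(c_n)$ so that $D+C$ is a Cantorval, which by Corollary~\ref{cortyp}(iv) amounts to showing that $D+C$ is regularly closed and that $\text{Fr}(D+C)$ is a Cantor set.

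\textbf{Automatic properties.} For \emph{any} nonempty compact $C$, the set $D+C$ is compact and has nonempty interior, since $\text{int}\,D+C$ is a nonempty open subset of $D+C$. Moreover $D+C$ is regularly closed: because $D=\overline{\text{int}\,D}$ and $C$ is compact, one has $\overline{\text{int}\,D+C} = \overline{\text{int}\,D}+C = D+C$, while $\text{int}\,D+C\subseteq\text{int}(D+C)$; together these give $D+C=\overline{\text{int}(D+C)}$. A further fact available for free is the inclusion $\text{Fr}(D+C)\subseteq\text{Fr}\,D+C$, because any $d\in\text{int}\,D$ satisfies $d+c\in\text{int}(D+C)$ for every $c\in C$. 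By the Guthrie--Nymann classification, $D+C$ is then either a multi-interval set or a Cantorval, and to settle the latter one must produce infinitely many gaps in $D+C$.

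\textbf{Choice of $C$.} Following the construction in \cite[Thm.~6.4]{GM23}, the sequence $(c_n)$ is chosen fast-convergent and with terms decaying rapidly relative to the natural scales of $(a_n)$: each $c_n$, together with its tail $\sum_{k>n}c_k$, is arranged to be small compared to the length of a suitably chosen gap of $D$ at a matched order. The matching uses Proposition~\ref{prop1} (the $k$-th order gap lemma) to identify, at each chosen order $k$, the dominating principal gap of order $k$ to be exploited. Through this synchronization, at infinitely many orders the Cantor gap-structure of $C$ interacts with the hierarchy of $D$-intervals and $D$-gaps in such a way that a new gap of $D+C$ persists at each such order, yielding infinitely many gaps of $D+C$ in total.

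\textbf{Main obstacle.} The principal difficulty is that the gaps of any Cantorval have lengths tending to zero, so no $C$ of positive diameter can preserve more than finitely many of $D$'s gaps by the naive Minkowski-translation argument. The technical heart of \cite[Thm.~6.4]{GM23} is therefore the verification that the careful alignment above genuinely creates infinitely many gaps of $D+C$ via the joint structure of the two achievement sets, rather than merely inheriting the few largest gaps of $D$. Once infinitely many gaps are in hand, $\text{Fr}(D+C)$ is infinite, closed, bounded and nowhere dense; a short additional check using Theorem~\ref{21.17}---that no $(D+C)$-interval and no $(D+C)$-gap share an endpoint---rules out isolated boundary points, so $\text{Fr}(D+C)$ is a Cantor set, and Corollary~\ref{cortyp}(iv) gives that $D+C$ is a Cantorval.
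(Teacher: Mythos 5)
Your approach matches the paper's, which simply cites \cite[Theorem 6.4]{GM23} (and its proof) with no further argument; you likewise defer the actual construction of $C$ to that reference. Your supplementary structural observations — that $D+C$ is regularly closed for any compact $C$, that $\mathrm{Fr}(D+C)\subseteq \mathrm{Fr}\,D + C$, and that by the Guthrie--Nymann classification it suffices to exhibit infinitely many gaps of $D+C$ — are correct and give useful framing that the paper does not spell out.
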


\begin{proposition}\label{CantorvalInterval}
For any achievable Cantorval $D$ there exists an achievable Cantor set C such that $D+C$ is an interval.
\begin{proof}
Let $D=A(a_n)$ and let $P:=\{n\in\mathbb N: \ (r_n,\,a_n) \text{ is a dominating gap}\,\}$. Arrange all elements of $P$ in the increasing order $P=(p_i)_{i=1}^\infty$. The series $\sum_{i\in\mathbb N}a_{p_i}$ is fast convergent.

We are going to define an increasing sequence $(n_i)_{i=1}^\infty$ of positive integers and another sequence $(N_i)_{i=1}^\infty$ of positive integers, the later one not necessarily monotone. We start with $n_1=N_1 :=1$ and continue by induction for $k\ge2$, defining
\begin{equation}
\label{dos}
n_k\ :=\ \min\left\{ s>n_{k-1}:\ \sum_{i\ge s}(2a_{p_i}-r_{p_i})\ <\ r_{p_{n_{k-1}}}\ \right\}
\end{equation}
and then choosing $N_k$ to be the unique positive integer such that
\begin{equation}
\label{uno}
a_{p_{n_{k-1}}}-r_{p_{n_{k-1}}}\ \le\ N_ka_{p_{n_k}}\ <\ a_{p_{n_{k-1}}}-r_{p_{n_{k-1}}}+a_{p_{n_k}}.
\end{equation}
The series $\sum_i(a_{p_{n_i}}; N_i)$ is semi-fast convergent and thus its set of subsums $C:=A\bigl(a_{p_{n_i}}; N_i\bigr)$ is a Cantor set \cite[Thm.16]{BFPW2}. Indeed,

\begin{align*}
\sum_{i>k}N_ia_{p_{n_i}}\ &\overset{\eqref{uno}}{<}\ \ \sum_{i=k}^\infty(a_{p_{n_i}}-r_{p_{n_i}})\ +\ \sum_{i>k}a_{p_{n_i}}\\[.1in]
&\le\ a_{p_{n_k}}-r_{p_{n_k}}\,+\,\sum_{j\ge n_{k+1}}(a_{p_j}-r_{p_j})\,+\ \sum_{j\ge n_{k+1}}a_{p_j}\ \overset{\eqref{dos}}{<}\ a_{p_{n_k}}-r_{p_{n_k}}\,+\,r_{p_{n_k}}=a_{p_{n_k}}.
\end{align*}

It remains to observe that the unique monotone mix $(c_n)$ of sequences $(a_n)$ and $(a_{p_{n_i}};N_i)$ is slowly convergent. Clearly, $(c_n)$ is also the monotone mix of two other sequences: $(a_n)_{n\not\in\{p_{n_k}:\,k\in\mathbb N\,\}}$ and $(a_{p_{n_i}};N_i+1)$. In particular, $c_n\le r_n^c$ [the last symbol stands for the $n$-th remainder of the series $\sum c_n$ ] for all such $n$ that do not satisfy
\begin{equation}
\label{tres}
\exists_{i\in\mathbb N} \qquad
 n=\max\{k: \ c_k=a_{p_{n_i}}\,\},
\end{equation}
because then $c_n=c_{n+1}$ or $c_n=a_j$ for some $j\not\in\{p_{n_i}:\ i\in\mathbb N\,\}$. In the latter case, if $a_j\le r_j^a$, then $c_n\le r_j^a\le r_n^c$. If, however, $a_j>r_j^a$, then denoting $i:=\max\{s:\ a_{p_{n_s}}\ge a_j\,\}$, we get $a_{p_{n_{i+1}}}< a_j$ and, because the $D$-gap $(r_{p_{n_i}}^a,\,a_{p_{n_i}})$ is dominating,
$$
a_j\,-\,r_j^a\ \le\ a_{p_{n_i}}\,-\,r_{p_{n_i}}^a\ \overset{\eqref{uno}}{\le}\ N_{i+1}a_{p_{n_{i+1}}}.
$$
Therefore,
$$
c_n\ =\ a_j\ \le\ r_j^a\,+\, N_{i+1}a_{p_{n_{i+1}}}\ <\ r_j^a\,+\,\sum_{t>i}N_ta_{p_{n_t}}\ =\ r_n^c.
$$
It remains to consider indices $n$ such that \eqref{tres} holds. In this case $c_n=a_{p_{n_i}}$ and
$$
c_n\ =\ a_{p_{n_i}}\ \overset{\eqref{uno}}{\le} \ \ r_{p_{n_i}}^a\,+\, N_{i+1}a_{p_{n_{i+1}}}\ <\ r_{p_{n_i}}^a\,+\,\sum_{j>i}N_ja_{p_{n_j}}\ =\ r_n^c.
$$
\end{proof}
\end{proposition}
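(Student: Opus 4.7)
The plan is to construct $C$ as the achievement set of a sequence $(b_n)$ whose monotone merger $(c_n)$ with $(a_n)$ is interval-filling. Since for positive nonincreasing sequences one has $A(a_n) + A(b_n) = A(c_n)$, and since by Kakeya's criterion $A(c_n)$ is an interval exactly when $c_n \le r_n^c$ for every $n$ (where $r_n^c$ denotes the tail of $\sum c_n$), the problem reduces to a careful choice of $(b_n)$ --- one that creates a Cantor set and yet smooths out the gaps of $D$ once merged with $(a_n)$.

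First, I set $P = \{n : (r_n, a_n) \text{ is a dominating } D\text{-gap}\}$ and list it increasingly as $(p_i)$. Since $D$ is a Cantorval, $P$ is infinite, and using that each $a_{p_i}$ exceeds every gap width to its left one checks $a_{p_i} > \sum_{j>i} a_{p_j}$; in particular $\sum a_{p_i}$ is fast convergent and any subsequence of $(a_{p_i})$ already generates a Cantor set on its own. By the Third Gap Lemma, every dominating gap of $D$ is principal, so controlling the gaps $(r_{p_i}, a_{p_i})$ is enough to force $D + C$ to be an interval.

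Next, I thin $(p_i)$ to a subsequence $(p_{n_k})$ growing fast enough that $\sum_{i \ge n_k}(a_{p_i}-r_{p_i})$ is much smaller than $r_{p_{n_{k-1}}}$, and I pick positive integer multiplicities $N_k$ so that $N_k\, a_{p_{n_k}}$ just barely exceeds the previous dominating gap length $a_{p_{n_{k-1}}}-r_{p_{n_{k-1}}}$. Let $(b_n)$ list each $a_{p_{n_k}}$ with multiplicity $N_k$. These two defining inequalities should force $\sum(a_{p_{n_k}}; N_k)$ to be semi-fast convergent in the sense of the introduction, so by Theorem 16 of \cite{BFPW2} the set $C = A(b_n)$ is a Cantor set.

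Finally, I verify that the merger $(c_n)$ is interval-filling, splitting by whether index $n$ falls on an $a$-term, on an internal copy inside some $b$-block, or on the last copy of a block. The first two cases are essentially immediate from the monotonicity of $(a_n)$ and from the fact that subsequent copies sit in the tail. The delicate case is the last: one must use the defining inequality for $N_{k+1}$ to absorb $a_{p_{n_k}}-r_{p_{n_k}}$ into the tail $\sum_{j>k} N_j\, a_{p_{n_j}}$ of the $b$-series, a step that relies crucially on $(r_{p_{n_k}}, a_{p_{n_k}})$ being a \emph{dominating} gap rather than merely a principal one. I expect the balancing act for the $N_k$ --- small enough to keep $C$ a Cantor set, large enough to plug the dominating gaps of $D$ --- to be the main obstacle of the proof.
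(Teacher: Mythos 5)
Your proposal takes essentially the same route as the paper: select the dominating (hence principal, by the Third Gap Lemma) gap indices $P=(p_i)$, thin them to a subsequence $(p_{n_k})$ so that the residual dominating gaps become negligible relative to the previous remainder, attach multiplicities $N_k$ chosen to just exceed each preceding gap width, invoke semi-fast convergence to get that $C=A(a_{p_{n_i}};N_i)$ is a Cantor set, and finally check that the monotone merger $(c_n)$ is interval-filling via Kakeya's criterion with a case split on where $n$ falls. The paper fills in the precise thinning condition $\sum_{i\ge s}(2a_{p_i}-r_{p_i})<r_{p_{n_{k-1}}}$ and the verification of $c_n\le r_n^c$ that you flag as the remaining "balancing act," but your outline already contains every structural idea the paper uses.
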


In \cite[Theorem 16]{BPW} the authors proved the following theorem.
\begin{proposition}\label{rozkladnadwacantory}
There exists a Cantorval $D=A(a_n)$ (specifically, the Guthrie Nymann's Cantorval) such that for any partition of $(a_n)$ into two infinite sequences $(y_n)$ and $(z_n)$ both $A(y_n)$ and $A(z_n)$ are Cantor sets. 
\end{proposition}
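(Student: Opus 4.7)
The plan is to show that for the Guthrie--Nymann sequence $a_{2m-1}=3/4^m$, $a_{2m}=2/4^m$, any partition $(a_n)=(y_n)\cup(z_n)$ into two infinite subsequences yields $A(y_n)$ and $A(z_n)$ perfect and nowhere dense, hence Cantor sets by the Guthrie--Nymann classification (Corollary \ref{cortyp}). Perfectness is automatic since both subsequences have infinitely many positive terms, so the task reduces to ruling out a nonempty interior for each.

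First I would encode the partition blockwise. For each $m\in\N$ let $\epsilon_m,\delta_m\in\{0,1\}$ record whether $a_{2m-1}$ and $a_{2m}$ lie in $(y_n)$, and put
$$
C_m\ :=\ \{0,\,3\epsilon_m,\,2\delta_m,\,3\epsilon_m+2\delta_m\}\ \subset\ \{0,2,3,5\}.
$$
Then $A(y_n)=\bigl\{\sum_{m=1}^\infty c_m/4^m\,:\,c_m\in C_m\bigr\}$, with the symmetric description for $A(z_n)$ using the complementary flags. Because $(z_n)$ is infinite, the set $M:=\{m:\epsilon_m+\delta_m<2\}$ of \emph{non-full} blocks must be infinite, since every full block contributes nothing to $z$. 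The crucial geometric input is that for $m\in M$ the set $C_m$ is one of $\{0\},\{0,2\},\{0,3\}$, so the maximal thickening $C_m/4^m+[0,\,5/(3\cdot 4^m)]$ consists of at most two disjoint intervals separated by a gap of length at least $1/(3\cdot 4^m)$. In contrast, for $C_m=\{0,2,3,5\}$ the thickenings at the middle digits $c=2,3$ overlap, and this overlap is exactly what produces the intervals of the Guthrie--Nymann Cantorval.

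I would then apply Lemma \ref{multlem} (equivalently Proposition \ref{charint}): $A(y_n)$ has empty interior iff $\|I_n^y\|\to 0$. Every non-full block embeds a separation into the iterate at its level, and because the tail thicknesses $r_{n(2k)}^y$ are monotone decreasing and bounded above by $5/(3\cdot 4^k)$ the gap can never be resealed by later steps. Accumulating infinitely many persistent separations forces every candidate interval component to be broken, so $\|I_n^y\|\to 0$ and $A(y_n)$ has empty interior; the same argument with the roles of $(y_n)$ and $(z_n)$ swapped handles $A(z_n)$. The main obstacle is the quantitative decay: a single non-full block need not strictly shrink the longest component in one step, because inside a wide parent component with a dense internal distribution of $F$-points the two-point $C_m$-shift may still keep the union connected. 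Overcoming this requires tracking the internal $F$-structure of components inherited from non-full blocks at earlier scales and showing that the accumulated constraints eventually break every long component. The fully detailed version of this argument is carried out in \cite[Theorem 16]{BPW}.
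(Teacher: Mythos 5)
The paper does not supply its own argument for this proposition; it is stated as a direct citation of \cite[Theorem~16]{BPW}, so there is no internal proof to compare against. Your sketch therefore ends exactly where the paper begins, with the same reference, and what you add in between is heuristic rather than a proof.

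Within the sketch there is a genuine gap, and it is not quite the one you flag. Your ``crucial geometric input'' asserts that for every non-full block the thickened set $C_m/4^m+[0,\,5/(3\cdot 4^m)]$ splits into two intervals separated by a gap of length at least $1/(3\cdot 4^m)$. That is simply false when $C_m=\{0\}$: an empty block contributes a single interval and no gap at all, and indeed at such a level $I_m^y=I_{m-1}^y$, so the iterate does not shrink. Yet the empty-block case is unavoidable. Since $(y_n)$ and $(z_n)$ are complementary, a partition in which each block goes entirely to one side (i.e.\ $C_m\in\{\{0\},\{0,2,3,5\}\}$ for all $m$, with both values occurring infinitely often) satisfies the hypotheses, and for it your mechanism produces no separations whatsoever for $y$ at the blocks that matter, nor for $z$. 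Handling this case amounts to showing that restricting the Guthrie--Nymann digit set to an arbitrary infinite, co-infinite set of block positions always destroys the interior, which is precisely the nontrivial content you would still have to supply. By contrast, for the one-term blocks $C_m\in\{\{0,2\},\{0,3\}\}$ your instinct is sound and can even be made quantitative: since $\delta(F_{m-1}^y)\ge 4/4^m$ and $\rho_{m-1}=\rho_m+c$ with $c\in\{2/4^m,3/4^m\}$ and $\rho_m\le 5/(3\cdot 4^m)<c$, one gets $\|I_m^y\|\le 2\rho_m$ directly, so the obstacle you worried about (``a dense internal $F$-structure resealing the gap'') does not actually arise there. Finally, note that the cited source \cite{BPW} does not argue via $\|I_n\|\to 0$ at all; it works with the \emph{center of distances} of the Guthrie--Nymann set, which is a structurally different tool, so even the completed version of your route would be a genuinely different proof from the one the paper invokes.
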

%\begin{proof}
%We consider the Guthrie Nymann's Cantorval $GN=A(a_n)=A(3,2;\frac{1}{4})$. In \cite{BPW} the authors showed that the set of all points with unique representation is dense in $GN$. On the other hand, by \cite{GM23}, we know that its density implies that the removing of any infinite and non-cofinite subsequence $(b_n)$ of $(a_n)$ leads to $A(c_n)$ being a Cantor set, where $(c_n)=(a_n)\setminus (b_n)$.
%\end{proof}

\begin{remark}
Note that in \cite{GM23} the authors showed that for $A=A(2^m,\ldots,4,3,2;\frac{1}{4})$ with any $m\in\mathbb{N}$ the set of all points with the unique representation is dense in $A$. In the same paper it was showed that if the set of all points of an achievement set $A(a_n)$ which have the unique representation is dense in $A(a_n)$, then for any partition of $(a_n)$ into two infinite sequences $(y_n)$ and $(z_n)$ both $A(y_n)$ and $A(z_n)$ are Cantor sets. Hence $A$ also has the property of the set $D$ in the Proposition \ref{rozkladnadwacantory}.
\end{remark}

\begin{example}
Note that one can find an interval-filling sequence $(a_n)$ such that for any its decomposition into two infinite subsequences $(y_n)$ and $(z_n)$ both sets $A(y_n)$ and $A(z_n)$ are Cantor sets. Indeed, let $a_n=\frac{1}{2^n}$, then $A(a_n)=[0,1]$. Since $a_n=r_n$ for all $n\in\mathbb{N}$, any its infinite, non-cofinite subsequence is fast convergent.
\end{example}

The next lemma is the Lemma 9 from \cite{BP} and will be crucial for our next theorem, but we need to introduce one more symbol, used in the lemma below and in the proof of the next theorem. If $S$ is a finite subset of $\mathbb R$, then we define $\delta(S):=\min\{|s-t|:\ s,t\in S, s\ne t\,\}$.

\begin{lemma}
\label{rozbicie}
If all terms of the series $\sum a_n$ are positive and $r_n<\delta(F_n)$ for infinitely many $n$, then $A(a_n)$ is a Cantor set.
\end{lemma}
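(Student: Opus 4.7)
The plan is to use the multi-interval iterates $I_n = \bigcup_{f\in F_n}[f,\,f+r_n]$ whose intersection equals $A(a_n)$, combined with the Foran-type characterization quoted in the introduction (a nonempty bounded perfect nowhere dense subset of $\mathbb R$ is a Cantor set).

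The key geometric observation is that whenever $r_n < \delta(F_n)$, the finitely many closed intervals making up $I_n$ are pairwise disjoint. Indeed, if $f < f'$ are any two distinct elements of $F_n$, then by definition of $\delta(F_n)$ we have $f' - f \ge \delta(F_n) > r_n$, so $[f,\,f+r_n]$ and $[f',\,f'+r_n]$ do not overlap. Consequently, each connectivity component of $I_n$ is precisely one of the intervals $[f,\,f+r_n]$, and in particular every component of $I_n$ has length exactly $r_n$.

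Since $\sum a_n$ has infinitely many positive terms, $A(a_n)$ is nonempty, compact and perfect, so by the Foran characterization it suffices to show that $A(a_n)$ is nowhere dense. Suppose for contradiction that $A(a_n)$ contains a nondegenerate interval $J$ of positive length $\ell := |J| > 0$. Because $A(a_n) = \bigcap_n I_n$, the interval $J$ is contained in $I_n$ for every $n$, and since $J$ is connected, $J$ must lie inside a single component of $I_n$. By hypothesis there are infinitely many $n$ with $r_n < \delta(F_n)$; pick one for which, in addition, $r_n < \ell$ (this is possible because $r_n \to 0$ as the series converges). For such $n$, every component of $I_n$ has length $r_n < \ell$, so $J$ cannot fit into any component of $I_n$, a contradiction.

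Hence $A(a_n)$ has empty interior, and combined with being nonempty, compact and perfect we conclude that $A(a_n)$ is a Cantor set. There is no real obstacle here; the only point that deserves attention is verifying that $r_n < \delta(F_n)$ really forces the components of $I_n$ to be the individual intervals $[f,\,f+r_n]$, which is immediate from the definition of $\delta$.
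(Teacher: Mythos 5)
Your proof is correct and follows essentially the same route as the paper's: observe that $r_n<\delta(F_n)$ forces the intervals $[f,\,f+r_n]$ to be pairwise disjoint, so $A(a_n)\subset I_n$ has no component longer than $r_n$, and since this holds for infinitely many $n$ with $r_n\to 0$, the set $A(a_n)$ has empty interior; being nonempty, compact and perfect, it is a Cantor set. The paper states this directly rather than by contradiction, but the content is identical.
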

\begin{proof}
Take any $n$ such that $r_n<\delta(F_n)$. We have
$$A(a_k)=F_n+E_n\subset F_n+[0,r_n]=\bigcup_{i=1}^{m_n}[f_i^n,f_i^n+r_n]$$
and, by the assumption that $r_n<\delta(F_n)$, we get $[f_i^n,f_i^n+r_n]\cap [f_j^n,f_j^n+r_n]=\emptyset$ for $i,j\in\{1,\ldots, m_n\}$, $i\neq j$. Hence $A(a_k)$ has no interval longer that $r_n$. But $\lim\limits_{n\to\infty}r_n=0$, so $A(a_k)$ has an empty interior.
\end{proof}

In the paper \cite{PWT18} it was shown that every central Cantor set is the algebraic sum of two central Cantor sets of Lebesgue measure zero. The following theorem generalizes that result.

\begin{theorem}
Every infinite achievable set is the algebraic sum of two achievable Cantor sets of Lebesgue measure zero.
\end{theorem}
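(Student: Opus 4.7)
The plan has two ingredients. First, I will establish that every semi-fast convergent series generates a Cantor set of Lebesgue measure zero. Second, I will show that $A$ admits a generating sequence which decomposes as a union of two semi-fast convergent sub-sequences $(y_n)$ and $(z_n)$. Combining these, both $A(y_n)$ and $A(z_n)$ are Cantor sets of measure zero (Cantor via Theorem 16 of \cite{BFPW2}; measure zero via the first fact), and the decomposition identity gives $A(y_n) + A(z_n) = A$.

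For the measure-zero fact, let $(b_n) = (\alpha_k, N_k)$ be semi-fast convergent, write $M_k := \sum_{j\le k}N_j$ and $P_k := \prod_{j<k}(N_j+1)$. The semi-fast condition $\alpha_k > r_{M_k}$ implies that the iterate $I_{M_k}$ is a disjoint union of $P_{k+1}$ intervals of length $r_{M_k}$ (disjointness follows because two distinct subsums of $\{b_1,\ldots,b_{M_k}\}$ differ by more than $r_{M_k}$, by a case analysis on where their representations first differ, paralleling the argument in the proof of Lemma \ref{rozbicie}). The new gaps opened between levels $M_{k-1}$ and $M_k$ have total length $N_k(\alpha_k - r_{M_k})P_k$, and the telescoping identity
\[
\sum_{k\ge 1} N_k(\alpha_k - r_{M_k})P_k \;=\; \sum_k N_k\alpha_k P_k \;-\; \sum_k N_k\alpha_k(P_k-1) \;=\; \sum_k N_k\alpha_k \;=\; \sigma
\]
shows the total gap mass equals $\sigma$, so $\lambda(A(b_n))=0$.

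For the decomposition, I process the blocks of a generating sequence $(a_n) = (\alpha_k, N_k)$ of $A$ in order, assigning multiplicities $j_k$ and $N_k - j_k$ to $(y_n)$ and $(z_n)$ respectively. The semi-fast conditions to verify are $\alpha_k > T_k^y := \sum_{\ell>k} j_\ell \alpha_\ell$ whenever $j_k \ge 1$, and $\alpha_k > T_k^z := \sum_{\ell>k}(N_\ell - j_\ell)\alpha_\ell$ whenever $N_k - j_k \ge 1$. Since $T_k^y + T_k^z = r_{M_k}$, a block with $r_{M_k} < 2\alpha_k$ admits a split that respects both conditions simultaneously, while a block with $r_{M_k} \ge 2\alpha_k$ must be assigned wholly to one side, alternating sides to keep both sub-sequences infinite.

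The main obstacle is that for some starting sequences the assignment scheme can fail — a long run of unsplittable blocks all assigned to $(y_n)$ can drive $T_k^y$ above $\alpha_k$ at some earlier block. The remedy is to exploit the non-uniqueness of generating sequences: for any infinite achievable $A$, a more suitable generating sequence can be selected. A case analysis by topological type supplies what is needed — a Cantor set uses its (eventually) fast convergent sequence granted by Kakeya's theorem, an interval or multi-interval set admits a geometric-style generating sequence whose odd/even index split is automatically fast convergent, and a Cantorval uses a multigeometric or the non-multigeometric Cantorval-generating family constructed in Section 3, which decomposes naturally along its "geometric threads." In each case the resulting pair of sub-sequences is semi-fast convergent, completing the proof.
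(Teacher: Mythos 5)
Your plan has a fatal flaw in the very first step: it is \emph{not} true that every semi-fast convergent series has an achievement set of Lebesgue measure zero. Taking $N_k=1$ for all $k$ makes ``semi-fast'' identical to ``fast convergent,'' and a fast convergent series can generate a central Cantor set of positive measure (a Smith--Volterra--Cantor set). Your telescoping computation silently assumes the series $\sum_k N_k\alpha_k P_k$ converges; when $P_{k+1}r_{M_k}\to L>0$ this series diverges and the subtraction $\sum_k N_k\alpha_k P_k-\sum_k N_k\alpha_k(P_k-1)$ is an $\infty-\infty$ form. The honest identity is the partial-sum version: the total gap length opened through level $M_k$ equals $\sigma-P_{k+1}r_{M_k}$, and hence $\lambda(A)=\lim_k P_{k+1}r_{M_k}$. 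Semi-fastness only forces this limit to be nonincreasing, not to be zero, so the conclusion $\lambda(A(b_n))=0$ simply does not follow from semi-fastness alone.

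The decomposition step also has unrepairable gaps in the shape you propose. You want both subsequences to be semi-fast and then appeal to Theorem 16 of \cite{BFPW2}, repairing failures by choosing a ``better'' generating sequence in a case analysis by topological type. But (a) a Cantor-type achievement set need not admit any eventually fast-convergent generating sequence (Kakeya's condition is sufficient, not necessary), and (b) general achievable Cantorvals are not all multigeometric nor of the GF form of Section 3, so the ``geometric threads'' picture does not cover them. The paper sidesteps both problems: it never tries to make the two halves semi-fast. Instead it assigns blocks of consecutive terms of the \emph{given} sequence alternately to $(y_n)$ and $(z_n)$, and before each switch waits until the tail $r_n$ of the original series is below $\tfrac{1}{(k+1)2^{p_k}}\delta(F_{p_k})$ of the finite prefix already assigned to the other side. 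Lemma \ref{rozbicie} (namely, $r_n<\delta(F_n)$ infinitely often forces a Cantor set) then applies directly to both halves, and the extra factor $\tfrac{1}{k+1}$ drives the outer measure of each half to zero along the checkpoints. This works uniformly for every infinite achievement set with no topological case split, and avoids the semi-fast machinery entirely. I'd suggest replacing your measure-zero lemma with a direct cover-by-iterates estimate and replacing the semi-fast target with the weaker $r_n<\delta(F_n)$ criterion.
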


\begin{proof}
Let $A(a_n)$ be an infinite achievement set. Define $n_0 = 1$ and $y_1=a_1$. Then $F^y_1=\{0,a_1\}$, so $\delta(F_1^y)=a_1$. One can find $n_1$ such that $r_{n_1}<a_1$. We define $z_1=a_2, z_2=a_3, \ldots, z_{n_1-1}=a_{n_1}$. Denote $F_{n_1-1}^z=A\big((z_k)_{k=1}^{n_1-1}\big)=A\big((a_k)_{k=2}^{n_1}\big)$. Let $n_2>n_1$ be such that $r_{n_2}<\frac{1}{2\cdot2^{n_1-1}}\cdot\delta(F_{n_1-1}^z)$. We put $y_2=a_{n_1+1}, y_3=a_{n_1+2}, \ldots, y_{n_2-n_1+1}=a_{n_2}$. In the next step we consider $F_{n_2-n_1+1}^y=A\big((y_k)_{k=1}^{n_2-n_1+1}\big)$ and find $n_3>n_2$ such that $r_{n_3}<\frac{1}{3\cdot2^{n_2-n_1+1}}\cdot\delta(F_{n_2-n_1+1}^y)$ to define consecutive terms of $(z_k)$.

Generally, assume that we have defined $n_k$ for some $k \in \N$ and we have used all of the first $n_k$ terms of the sequence $(a_n)$ to define sequences $y$ and $z$. If $k$ is odd, then we take $n_{k+1} > n_k$ such that $r_{n_{k+1}}<\frac{1}{(k+1)\cdot2^{p_k}}\cdot\delta(F_{p_k}^z)$, where $p_k=\sum_{i=0}^\frac{k-1}{2} (n_{2i+1}-n_{2i})$ is the number of elements of the sequence $z$ defined so far. Then we define consecutive terms of the sequence $y$ as $a_{n_k+1}, a_{n_k+2},  \ldots, a_{n_{k+1}}.$
If $k$ is even, then we take $n_{k+1} > n_k$ such that $r_{n_{k+1}}<\frac{1}{(k+1)\cdot2^{q_k}}\cdot\delta(F_{q_k}^y)$, where $q_k=1+\sum_{i=1}^\frac{k}{2} (n_{2i}-n_{2i-1})$ is the number of elements of the sequence $y$ defined so far. Then we define consecutive terms of the sequence $z$ as $a_{n_k+1}, a_{n_k+2},  \ldots, a_{n_{k+1}}.$

Note that both $(y_k)$ and $(z_k)$ satisfy the assumptions of Lemma \ref{rozbicie}. Thus, $A(y)$ and $A(z)$ are Cantor sets. Moreover, for even $k$ we have $A(y) \subset F_{q_k}^y +[0,r^y_{q_k}]$, where $r^y_i$ is the $i$-th remainder for the series $\sum y_n$. Since $|F_{q_k}| \leq 2^{q_k}$ and $$r^y_{q_k}< r_{n_{k+1}} < \frac{1}{(k+1)\cdot2^{q_k}}\cdot\delta(F_{q_k}^y),$$
we have
$$\lambda(A(y)) \leq \lambda(F_{q_k}^y +[0,r^y_{q_k}]) \leq |F_{q_k}^y| \cdot r^y_{q_k} < 2^{q_k} \cdot \frac{1}{(k+1)\cdot2^{q_k}}\cdot\delta(F_{q_k}^y) \leq \frac{a_1}{k+1} \to 0$$
where $\lambda$ denotes the Lebesgue measure.
Similarly, we show that the measure of $A(z)$ is zero.

Observe that $y \cup z$ is a decomposition of the sequence $(a_n)$, and so $A(y) + A(z) = A(a_n)$.

\end{proof}
\begin{remark} 
It is known (see \cite{AI}) that the achievement set of a fast convergent series is a central Cantor set.
Note that if $(a_n)$ in the above theorem is fast convergent, then also $y$ and $z$ are fast convergent, so $A(y)$ and $A(z)$ are central Cantor sets.
\end{remark}
\begin{remark}
From \cite[Proposition 2.11.]{Now1} we can infer something even stronger for an achievable set which is an interval. Namely it can be presented as a sum of two central Cantor sets (that is, the achievement sets of fast convergent series) of Hausdorff dimension zero.
\end{remark}

\begin{remark}\label{infinity}
In \cite[Example 1]{Nymannlin} the author gave an example of a Cantor set $C$ such that for any $k\in\mathbb{N}$ the sum $\underbrace{C+\ldots+C}_{k \ \text{times}}$ is not an interval (actually, it can be showed that this sum is a Cantor set for any $k$). He also characterized the situation by the condition $\limsup\frac{a_n}{r_n}=\infty$.
Note that if $A(a_n)$ is a multi-interval set, then the inequality $a_n>r_n$ holds for finitely many $n$, so $\limsup\frac{a_n}{r_n}<\infty$. Hence there is no such construction for that case.
\end{remark}

\begin{theorem}
\label{immortal}
There is an achievable Cantorval such that the algebraic sum of any finite number of copies of it remains a Cantorval.
\end{theorem}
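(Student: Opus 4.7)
The plan is to take $D$ from the GF family provided by Theorem~\ref{nmg2}, choosing the parameters so that the tail sums $r_{K_n}=\sum_{i>n}(s_i+m_i)q_i$ shrink faster than any constant multiple of the ``bottom-of-block'' values $m_nq_n$. Concretely, I would apply Theorem~\ref{nmg2} with, for instance, $m_n:=2^n$ and $c_n:=n$; then $\alpha=\sup_n c_n/m_n\le 1/2<1$, while $m_n/c_n\to\infty$. Let $D=A(a_n(m,k,q))$ be the resulting Cantorval. Chasing through \eqref{1-alfa} and the geometric-tail estimate at the end of the proof of Theorem~\ref{nmg2} gives
\[
r_{K_n}\ \le\ \frac{1}{\alpha}(s_{n+1}+m_{n+1})q_{n+1}\ \overset{\eqref{gwiazd}}{<}\ \frac{c_n}{\alpha}\,q_n,
\]
and hence the crucial divergence $m_nq_n/r_{K_n}\ \ge\ \alpha m_n/c_n\to\infty$.

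Now fix $k\in\N$ and let $(b_n)$ be the monotone nonincreasing arrangement of $k$ copies of $(a_n)$, so that $A(b_n)=\underbrace{D+\cdots+D}_{k\ \text{times}}$. Write $F_m^b$ and $r_m^b$ for the associated initial-subsum sets and remainders. To apply the Guthrie--Nymann classification and conclude $A(b_n)$ is a Cantorval, I plan to check two things: $A(b_n)$ contains an interval, and $A(b_n)$ has infinitely many gaps. For the first, I observe that the $q_n$-close set $D_n\subset F_{K_n}$ constructed in the proof of Theorem~\ref{nmg1} (with stretch $\sum_{i=1}^{n}(s_i-m_i)q_i$) sits inside $F_{kK_n}^b$ --- just pick a single copy in each block. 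Since $r_{kK_n}^b=k\,r_{K_n}\ge r_{K_n}>q_n$ by \eqref{gf1}, the sequence $D_n$ is $r_{kK_n}^b$-close, and Proposition~\ref{charint} gives
\[
\lim_{m\to\infty}\Delta_{r_m^b}F_m^b\ \ge\ \sum_{i=1}^\infty (s_i-m_i)q_i\ >\ 0,
\]
so $A(b_n)$ contains an interval. For the second, at position $j:=kK_n$ in $(b_n)$ one has $b_j=m_nq_n$ (the last copy of the smallest value in block $n$) and $r_j^b=k\,r_{K_n}$; by the divergence above, $b_j>r_j^b$ holds for all sufficiently large $n$, and the First Gap Lemma produces a principal gap $(r_j^b,b_j)$ in $A(b_n)$ for each such $n$, giving infinitely many gaps.

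The main obstacle I expect is nothing conceptual but rather the bookkeeping to keep the two verifications compatible: the ``interval witness'' $D_n$ lives in a subset-sum structure inherited from a single copy, while the ``gap witness'' exploits that $k$ copies of the tail are still dominated by $m_nq_n$. Once both are in place, the Guthrie--Nymann theorem excludes the other three topological types (finite, Cantor, multi-interval), forcing $A(b_n)$ to be a Cantorval. The key reason the \emph{same} $D$ works uniformly in $k$ is precisely the super-geometric decay built into our choice of $m_n$ and $c_n$: the inequality $m_nq_n>k\,r_{K_n}$ is satisfied for all sufficiently large $n$, no matter how large $k$ is.
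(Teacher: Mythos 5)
Your strategy is essentially the paper's: use Theorem~\ref{nmg2} to produce a GF Cantorval with $m_n$ growing much faster than $c_n$, which forces $m_nq_n/r_{K_n}\to\infty$, and hence $\limsup a_n/r_n=\infty$; then conclude each $k$-fold sum is a Cantorval. The paper simply cites the known equivalence (Remark~\ref{infinity}) that $\limsup a_n/r_n=\infty$ holds if and only if no $k$-fold sum is a multi-interval set, and observes that since $0\in D$, the $k$-fold sum contains $D$ and hence an interval; you instead re-derive both ingredients directly, embedding the $q_n$-close witness $D_n\subset F_{K_n}$ into $F_{kK_n}^b$ to invoke Proposition~\ref{charint}, and locating principal gaps at $j=kK_n$ via the First Gap Lemma. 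That direct verification is correct and self-contained (and somewhat more work than the inclusion $D\subset D+\cdots+D$ would require for the interval part), but it is the same idea. One parameter slip: $c_n:=n$ violates the hypothesis $c_n>1$ of Theorem~\ref{nmg2} at $n=1$, and $c_n:=n+1$ with $m_n:=2^n$ gives $c_1/m_1=1$, which breaks $\alpha<1$. Take instead, e.g., $m_n:=2^{n+1}$ with $c_n:=n+1$, or simply the paper's choice $m_n:=n+1$, $c_n:=\tfrac32$; the rest of your argument then goes through unchanged.
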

\begin{proof}
It is well-known that for any infinite achievement set $A(a_n)$ the algebraic sum of any finite number of copies of $A(a_n)$ is not a multi-interval set if and only if $\limsup\frac{a_n}{r_n}=+\infty$. Moreover, if $\limsup \frac{a_n}{r_n}<+\infty$, then the algebraic sum of sufficiently many copies of $A(a_n)$ is an interval.

Thus, in order to prove the theorem, it suffices to construct a convergent GF series $\sum a(m,k,q)$ such that $A(a_n)$ is a Cantorval and $\limsup_n\frac{a_n}{r_n}=+\infty$. To do that, take $m_n:=n+1$ and $c_n:=\frac32$ for all $n\in\mathbb N$. Then, by Theorem \ref{nmg2}, there is a convergent GF series $\sum a(m,k,q)$ with $A(a_n)$ being a Cantorval and satisfying additionally
\begin{equation}
\label{imm1}
(s_{n+1}+m_{n+1})q_{n+1}\ <\ \frac32\,q_n \qquad \text{ for all $n$}
\end{equation}
and
\begin{equation}
\label{imm2}
\frac32\ <\ \left(1\,-\,\frac34\right)(s_n+m_n) \qquad \text{ for all $n$.}
\end{equation}
Then, as in the proof of the Theorem \ref{nmg2},
$$
\frac{(s_{n+1}+m_{n+1})q_{n+1}}{(s_n+m_n)q_n}\ <\ \frac14 \qquad \text{ for all $n$}
$$
and
$$
r_{K_n}\,=\,\sum_{i>n}(s_i+m_i)q_i\ \overset{\eqref{imm2}}{<}\ \sum_{j=0}^\infty(s_{n+1}+m_{n+1})q_{n+1}\frac1{4^j}\ =\ \frac43(s_{n+1}+m_{n+1})q_{n+1}\ \overset{\eqref{imm1}}{<} 2q_n
$$
for all $n$. Thus,
$$
\limsup\limits_{n\to \infty}\frac{a_n}{r_n}\ \ge\ \limsup\limits_{n\to \infty}\frac{a_{K_n}}{r_{K_n}}\ =\ \lim\limits_{n\to\infty}\frac{m_n}2\ =\ +\infty.
$$
\end{proof}

\begin{example}\label{noCantorval}
Let $C=A(\frac{1}{m^n})$, $m\in\mathbb{N}$, $m\geq 3$. Then $C_k=\underbrace{C+\ldots+C}_{k \ \text{times}}$ is a Cantor set for every $k<m-1$, while $C_k$ is an interval for $k\geq m-1$.
\end{example}

\begin{theorem}\label{tkmp}
Let $m,p\in(\mathbb{N}\setminus \{1\}) \cup \{\infty\}$, $p\geq m$. There exists an achievable Cantor set $C$ such that $C_k:=\underbrace{C+\ldots+C}_{k \ \text{times}}$ is
\begin{itemize}
\item a Cantor set for every $k< m$;
\item a Cantorval for each $k$ such that $k\geq m$ and $k<p$;
\item an interval for all $k \in \N$ such that $k\geq p$.
\end{itemize}
\end{theorem}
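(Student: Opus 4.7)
The plan is a case analysis on $(m,p)$. Two of the four combinations reduce immediately to material already in the paper: when $m=p=\infty$, Remark \ref{infinity} supplies a Cantor set whose every finite algebraic self-sum is again a Cantor set (Nymann's example with $\limsup a_n/r_n=\infty$); when $m=p<\infty$, I would take $C=A\bigl((m+1)^{-n}\bigr)$, for which $C_k$ is the set of base-$(m+1)$ expansions with digit set $\{0,1,\ldots,k\}$, hence a Cantor set when $k<m$ and the interval $[0,1/m]$ when $k\ge m$, with no Cantorval phase appearing.

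For the main case $2\le m<p<\infty$, I would use a multigeometric sequence in the spirit of Propositions \ref{sumoftwocantorvals}--\ref{sumoftwocantorvals2}: take $C=A(N+s-1,N+s-2,\ldots,N;\,q)$ with $N$ and $s$ chosen so that $\Sigma\{N,\ldots,N+s-1\}$ is a long consecutive run flanked by small end-gaps of size $N$. The $k$-fold sum $C_k$ is then the multigeometric series with each $l_i$ repeated $k$ times, whose digit set $\Sigma^{(k)}$ has a run whose length $r^{(k)}+1$ grows linearly in $k$ while the maximum $M_k=k\sum l_i$ also grows linearly in $k$. The three topological regimes correspond to three intervals of $q$: $C_k$ is Cantor when $q$ is below the interior-appearance threshold $1/(M_k+1)$, Cantorval in the window supplied by Theorem \ref{sufficientcantorval} (interior nonempty by $q\ge 1/(r^{(k)}+1)$, but principal gaps persist because $q<l_s/(kL+l_s)$), and an interval once $q$ exceeds the upper end of that window so all gaps close. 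Choosing $N$ and $s$ large enough relative to $m$ and $p$, one makes the critical values interlace so that a single fixed $q$ lies below threshold for $k<m$, in the Cantorval window for $m\le k<p$, and above the interval threshold for $k\ge p$.

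For the remaining case, $m$ finite and $p=\infty$, I would replace the multigeometric by a GF series produced by Theorem \ref{nmg2}, with the sequence $(c_n)$ chosen so that $\limsup a_n/r_n=+\infty$ (the key mechanism from Theorem \ref{immortal}, ensuring that $C_k$ always has infinitely many principal gaps, hence is never a finite union of intervals). Simultaneously, the parameters $m_n$, $k_n$, $q_n$ are calibrated so that the GF-interior condition \eqref{gf1}, which scales roughly linearly in the multiplicity $k$ when we pass from $(a_n)$ to $(a_n^{(k)})$, is violated for $k<m$ but satisfied for $k\ge m$; this places the Cantor-to-Cantorval transition at exactly $k=m$ while keeping every $C_k$, $k\ge m$, a Cantorval.

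The main obstacle lies in the parameter tuning and, more importantly, in the negative statements: Theorem \ref{sufficientcantorval} gives only sufficient conditions in each direction, so for $k<m$ one must show that the interior is really empty, and for $k\ge p$ that every gap really closes. For the ``empty interior'' half I would apply Proposition \ref{charint}, estimating $\Delta_{r_n}F_n$ directly using the prescribed digit-set structure of $\Sigma^{(k)}$ (or, in Cases where principal gaps isolate the $n$-th iterate, Lemma \ref{rozbicie}); for the ``interval'' half one verifies that the inequality $a_n\le r_n$ holds eventually in the $k$-fold replicated sequence once $k\ge p$. Corner configurations with $p$ only slightly larger than $m$ are handled by increasing the block-size parameter $s$ so that the widths of the Cantorval windows for $k=m,m+1,\ldots,p-1$ overlap on a common value of $q$.
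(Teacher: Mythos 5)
Your sketch matches the paper only in the two degenerate corners: $m=p<\infty$ (geometric series $A((m+1)^{-n})$, Example~\ref{noCantorval}) and $m=p=\infty$ (Remark~\ref{infinity}). For the substantive cases your route is genuinely different from the paper's, which does \emph{not} use multigeometric or GF series but instead builds a custom fast-convergent sequence $(a_n)$ by prescribing the ratios $q_i=a_i/r_i$ block by block via backward induction, verifies $\lambda(C_{m-1})=0$ by a direct product estimate, shows $C_m$ is a Cantorval through an explicit hierarchy of $q_n$-close sets $D_n$ together with Proposition~\ref{charint}, and disposes of $2\le m\le 6$ by passing to $K=C_4$ built for the parameters $(4m,4p)$.

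Unfortunately your replacement route has a concrete gap in the main case $2\le m<p<\infty$. With $l_i\in\{N,\dots,N+s-1\}$ and $s\ge N+1$ so that $\Sigma\{l_i\}$ is the run $\{N,\dots,L-N\}$ flanked by $0$ and $L$, the three thresholds you rely on are approximately $L_k=\frac{1}{kL-2N+1}$ (Cantorval lower), $U_k=\frac{N}{N+kL}$ (interval), and the measure-zero bound $q<\frac{1}{|\Sigma^{(m-1)}|}$ with $|\Sigma^{(m-1)}|\approx(m-1)L-2N+3$; the Cantor assertion for $k<m$ has to go through some such estimate, since $q<1/(r^{(k)}+1)$ is only the negation of a sufficient condition and proves nothing. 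Requiring simultaneously $q\ge U_p$ and $q<1/|\Sigma^{(m-1)}|$ forces $(N(m-1)-p)L<2N^2-2N$. Since $L=\sum l_i\ge\tfrac{3}{2}N(N+1)$ grows at least quadratically in $N$, this is hopeless unless $N(m-1)\le p$, i.e.\ $N\le p/(m-1)$, while the nonemptiness of the Cantorval window further needs $N>(p-1)/m$. For $m$ large and $p=m+1$ (say $m=10$, $p=11$) the interval $\bigl((p-1)/m,\ p/(m-1)\bigr]=(1,\,11/9]$ contains no integer $N\ge 2$, and switching to the complementary branch $N(m-1)>p$ forces $L<\frac{2N^2-2N}{N(m-1)-p}$ which is far below the minimal admissible $L$; increasing the block size $s$ only makes this worse. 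So the single-window interlacing you describe cannot be arranged within this multigeometric family for all $(m,p)$, and no workaround is indicated.

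The fourth case, $m$ finite and $p=\infty$, has a more basic mismatch: Theorem~\ref{nmg2} and Theorem~\ref{immortal} produce a \emph{Cantorval}, whereas here $C=C_1$ must be a \emph{Cantor set}. To follow your plan you would have to detune a GF series so that $A(a_n)$ itself is Cantor but $A(a_n;m)$ is a Cantorval; this is not what the GF machinery is set up to deliver, the $k$-fold repetition of a GF series is not again a GF series (so conditions \eqref{gf1}, \eqref{gf2} do not literally transfer), and the phrase ``scales roughly linearly in $k$'' is doing all the work without justification. The paper's construction handles $p=\infty$ inside the same framework by taking $p_n=m+n$, which is a different mechanism from the GF one and yields a genuine Cantor set $C$ at $k=1$.

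In short: correct and paper-matching for $m=p$, but the two essential cases $m<p<\infty$ and $m<p=\infty$ each have a missing idea (a sufficiently flexible family for the first, a Cantor-at-$k{=}1$ construction for the second), and these are precisely the places where the paper spends most of its effort.
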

\begin{proof}
If $m=p\in \N$, then we take $C=A(\frac{1}{(m+1)^n})$ as in Example \ref{noCantorval}. If $m=p = \infty$, then we take $C$ from Remark \ref{infinity}.

Suppose that $6<m < p \leq \infty$.
Define a sequence $(p_n)$ in the following way. If $p < \infty$, then put $p_n:=p$ for all $n \in \N$, and if $p = \infty$, then put $p_n = m+n$.
We will now define a fast convergent sequence $a=(a_n)$ such that $A(a_n)$ satisfies the assertion of the theorem. First, we will define first $k_1$ terms of the sequence for some $k_1 \in \N$. We will define these terms, using backward induction and starting from $a_{k_1}$. For all $n \in \N$ we will define $a_n$ in such a way that $a_n=q_nr_n$ for all $n\in\N$, where $q_n > 1$. When the induction is over for some $a_{k_1-j}$, we will put $k_1 -j =1$, and so $k_1 := j+1$.

Take an arbitrary number $S_1 > 0$. We will define a sequence $(a_n)$ in such a way that $r_{k_1} = S_1$. For the convenience, although we do not have all the terms in the sequence, we will use the notation with remainders $r_{k_i}$ instead of $S_i$.

Put $q_{k_1} := p_1$ and $a_{k_1} := p_1S_1 = q_{k_1}r_{k_1}$.
Observe that, we can now define $r_{k_1-1}$ as $r_{k_1} +a_{k_1} = r_{k_1}+q_{k_1}r_{k_1} = r_{k_1}(1+q_{k_1})$.
Generally, when we know $r_i$ and $a_i=q_ir_i$ for some $i > 1$, $q_i > 0$, then we have \begin{equation}\label{r}
r_{i-1} = r_i + a_i = r_i (1+q_i).
\end{equation}

%If $m$ is even, then put $$q_{k_1-2} := \frac{m^2(p_1+1)+2p_1(m+1)-m}{2(m+1)(p_1+1)}$$ and $a_{k_1-2} = q_{k_1-2} r_{k_1-2}.$ If $m$ is odd, then put $$q_{k_1-2} := \frac{m}{2}-\frac{2p_{n+1}(m-1)+3m}{2(m+1)(p_{n+1}+1)}$$ and $a_{k_1-2} = q_{k_1-2} r_{k_1-2}.$

Suppose that for some $i \geq 0$ we have defined $q_{k_1-i} > 1$
%$\ve_{k_1-i-1}$
and $a_{k_1-i} := q_{k_1-i} r_{k_1-i}$.
Put
\begin{equation} \label{q}
q_{k_1-(i+1)}:=\frac{(\lfloor \frac{m}{2}\rfloor-1 )q_{k_1-i}}{(1+q_{k_1-i})}+\frac{mr_{k_1}}{2r_{k_1-(i+1)}}.
\end{equation}
Since $m > 6$, we clearly have $q_{k_1-(i+1)} >1.$
Now, put
$a_{k_1-(i+1)} := q_{k_1-(i+1)} r_{k_1-(i+1)}$.

We continue this procedure until we define $q_{k_1-j_1}$ and $a_{k_1-j_1}$, where $j_1=\lceil \frac{2p_1}{m} \rceil-1.$ Then, we put $k_1:=j_1+1$, and thus we have defined $a_1,a_2, a_3,  \ldots, a_{k_1}.$

Suppose that for some $n \in \N$ we have defined $k_n \in \N$, $S_n > 0$, $(q_i)_{i=1}^{k_n}$ and a sequence $(a_i)_{i=1}^{k_n}$. We will define $k_{n+1}>k_n$, a sequence $(a_i)_{i=k_n+1}^{k_{n+1}}$ and $S_{n+1} > 0$ in such a way that $r_{k_{n+1}}=S_{n+1}$. Assume that we have $S_{n+1}>0$ and $k_{n+1}>k_n$ (their actual value will be established later).
Put $q_{k_{n+1}}: = p_{n+1}$ and $a_{k_{n+1}} := q_{k_{n+1}} S_{n+1} = q_{k_{n+1}} r_{k_{n+1}}$.

%If $m$ is even, then put $ q_{k_{n+1}-1} := \frac{m-2}{2}$ and if $m$ is odd, then put $q_{k_{n+1}-1} := \frac{m}{2}-\frac{p_{n+1}}{2(p_{n+1}+1)}$. Now, put $$a_{k_{n+1}-1} := q_{k_{n+1}-1} r_{k_{n+1}-1} = q_{k_{n+1}-1} (q_{k_{n+1}}+1) r_{k_{n+1}}=q_{k_{n+1}-1} (p_{n+1}+1) S_1.$$
%Define also $\ve_{k_{n+1}-2}:= \frac{m}{2(1+q_{k_{n+1}})(1+q_{k_{n+1}-1})}=\frac{mr_{k_{n+1}}}{2r_{k_{n+1}-2}}$.

Suppose that for some $i \geq 0$ we have defined $q_{k_{n+1}-i} > 1$ and $a_{k_{n+1}-i} := q_{k_{n+1}-i} r_{k_{n+1}-i}$.
Put $$q_{k_{n+1}-(i+1)}:=\frac{(\lfloor \frac{m}{2}\rfloor-1 )q_{k_{n+1}-i}}{(1+q_{k_{n+1}-i})}+\frac{mr_{k_{n+1}}}{2r_{k_{n+1}-(i+1)}}.$$
We have $q_{k_1-(i+1)} >1.$ Now, put
$a_{k_{n+1}-(i+1)} := q_{k_{n+1}-(i+1)} r_{k_{n+1}-(i+1)}$.

We continue this procedure until we define $q_{k_{n+1}-j_{n+1}}$ and $a_{k_{n+1}-j_{n+1}}$, where $j_{n+1}=\lceil \frac{2p_{n+1}}{m} \rceil-1.$

Now, put $q_{k_{n+1}-j_{n+1}-1}:=m-3$ and $a_{k_{n+1}-j_{n+1}-1}:= q_{k_{n+1}-j_{n+1}-1}r_{k_{n+1}-j_{n+1}-1}.$
For $i> 1$ define
$q_{k_{n+1}-j_{n+1}-i}:= m-\frac{1}{2}$ and $a_{k_{n+1}-j_{n+1}-i}:= q_{k_{n+1}-j_{n+1}-i}r_{k_{n+1}-j_{n+1}-i}$ until $t_{n+1}> 1$ such that

\begin{equation}\label{miara}
\frac{m^{j_{n+1}+t_{n+1}+1}}{(1+q_{k_{n+1}})\cdot(1+q_{k_{n+1}-1})\cdot \ldots \cdot (1+q_{k_{n+1}-j_{n+1}-t_{n+1}})}<\frac{1}{2}.
\end{equation}
%and
%\begin{equation}\label{zmieszczenie sie}
%\frac{m}{8} > \frac{1}{(1+q_{k_{n+1}-j})\cdot(1+q_{k_{n+1}-t-1})\cdot \ldots \cdot (1+q_{k_{n+1}-j})}.
%\end{equation}
We will find such $t_{n+1}$, because for $i > 1$ we have
$$\frac{m^{j_{n+1}+i+1}}{(1+q_{k_{n+1}})\cdot(1+q_{k_{n+1}-1})\cdot \ldots \cdot (1+q_{k_{n+1}-j_{n+1}-i})} $$$$= \frac{m^{j_{n+1}+2}}{(1+q_{k_{n+1}})\cdot(1+q_{k_{n+1}-1})\cdot \ldots \cdot (1+q_{k_{n+1}-j_{n+1}-1})}\cdot\frac{m^{i-1}}{(1+q_{k_{n+1}-j_{n+1}-2})\cdot \ldots\cdot(1+q_{k_{n+1}-j_{n+1}-i})}$$$$= \frac{m^{j_{n+1}+2}}{(1+q_{k_{n+1}})\cdot(1+q_{k_{n+1}-1})\cdot \ldots \cdot (1+q_{k_{n+1}-j_{n+1}-1})}\cdot\frac{m^{i-1}}{(m+\frac{1}{2})^{i-1}}\xrightarrow{i\to\infty} 0. $$

We put $k_{n+1}:=k_n+j_{n+1}+t_{n+1}+1$, and $S_{n+1} : =\frac{S_n}{(1+q_{k_{n}+1})\cdot \ldots\cdot(1+q_{k_{n+1}})}.$ Since
$$r_{k_{n}} = r_{k_n+1}+a_{k_n+1}=(1+q_{k_n+1})r_{k_{n}+1}=(1+q_{k_n+1})\cdot(r_{k_n+2}+a_{k_{n+2}})=(1+q_{k_n+1})(1+q_{k_n+2})r_{k_n+2}$$$$= \ldots=(1+q_{k_n+1})\cdot \ldots\cdot(1+q_{k_{n+1}})r_{k_{n+1}},$$
we have
$$r_{k_{n+1}}=\frac{r_{k_n}}{(1+q_{k_n+1})\cdot \ldots\cdot(1+q_{k_{n+1}})} = \frac{S_n}{(1+q_{k_{n}+1})\cdot \ldots\cdot(1+q_{k_{n+1}})} =S_{n+1}.$$

Thus, we have inductively defined sequences $(q_i)$, $(k_n)$, $(S_n)$ and $(a_i)$ in such a way that $a_{i}=q_ir_i$ for all $i \in \N$ and for all $n \in \N$ $r_{k_n}=S_n$.

Consider the set $C:=A(a_n)$. It follows from the construction that for all $i\in \N$, $p\geq q_i >1$, thus $a_{i} = q_i r_i > r_i$ for all $i \in \N$, so the sequence $(a_i)$ is fast convergent, which implies that $C$ is a Cantor set. We will now examine sets $C_k $ for $k > 1$. First, observe that $C_k = A(x_n)$, where $(x^{(k)}_n) = (a_i;k)$ that is the sequence in which every term $a_i$ is repeated $k$ times. Denote by $R_n^{(k)}$ a $n$-th remainder of the sequence $(x^{(k)}_n)$, that is,
$R_n^{(k)} = \sum _{i=n+1}^\infty x_i^{(k)}$.
In particular, for $n \in \N$,
$R_{k\cdot n}^{(k)} = \sum _{i=n+1}^\infty ka_i =kr_n.$ Also, let $F^{(k)}_n := \{\sum_{i=1}^n \ve_ix_i^{(k)}\colon \ve_i \in \{0,1\}\}.$

By Kakeya's Theorem we know that $C_k$ is an interval if and only if $R_n^{(k)} \geq x^{(k)}_n$ for all $n \in \N$. For $n$ which are not divisible by $k$ we have $R_n^{(k)} \geq x^{(k)}_{n+1} =x^{(k)}_n$. For indices of the form $n \cdot k$ for some $n \in \N$ we have
$R_{k\cdot n}^{(k)} =kr_n$ and $x_{k\cdot n} = a_n$. So $C_k$ is an interval if and only if
$a_n \leq kr_n$ for all $n \in \N$. From the construction we have $a_{k_n} = p_nr_{k_n}$ for any $n \in \N$. If $p \in \N$, then $p_n = p$ for all $n$, and hence there is infinitely many $n$ such that $a_n = pr_n$ and for all $n \in \N$, $a_n \leq pr_n$. Hence $C_k$ is an interval if and only if $k \geq p$. Moreover, $C_k$ is not a finite union of intervals for $k<p$.
If $p = \infty$, then $p_n = m+n$ for all $n\in \N$, and thus for any $k \in \N$ there is infinitely many $n$ such that $a_n > kr_n$, so $C_k$ is not a finite union of intervals for any $k \in \N$.

We will now show that $C_k$ is a Cantor set for $k < m$. Since $0 \in C$, we have $C_k \subset C_{k+1}$ for all $k\in \N$, so it suffices to show that $C_{m-1}$ is a Cantor set.
First, observe that $C_{m-1} \subset F^{(m-1)}_{n} + [0,R^{(m-1)}_n]$, for all $n \in \N$. In particular,
$C_{m-1} \subset F^{(m-1)}_{(m-1)\cdot k_n} + [0, (m-1)r_{k_n}]$ for $n \in \N$. Therefore, by (\ref{miara}), we have
$$\lambda(C_{m-1}) \leq \lambda(F^{(m-1)}_{(m-1)\cdot k_n} + [0, (m-1)r_{k_n}]) \leq |F^{(m-1)}_{(m-1)\cdot k_n}|\cdot (m-1)r_{k_n} \le \frac{m^{k_n}\cdot(m-1) r_{k_1}}{(1+q_{k_1+1})\cdot (1+q_{k_1+2})\cdot  \ldots \cdot (1+q_{k_n})} $$$$= m^{k_1}(m-1)r_{k_1}\cdot \frac{m^{k_2-k_1}}{(1+q_{k_1+1})\cdot  \ldots \cdot (1+q_{k_2})} \cdot  \ldots \cdot \frac{m^{k_n-k_{n-1}}}{(1+q_{k_{n-1}+1})\cdot  \ldots \cdot (1+q_{k_n})} \leq m^{k_1}(m-1)r_{k_1}\cdot \left( \frac{1}{2} \right) ^ {n-1} \xrightarrow{n \to \infty} 0$$
where $\lambda$ is the Lebesgue measure. Since $C_{m-1}$ has measure zero, it has to be a Cantor set.

Finally, we will show that $C_m$ is a Cantorval, and thus also $C_k$ is a Cantorval for $m \leq k < p$.

We will use Proposition \ref{charint}. We are going to define inductively a sequence of sets $(D_n)_{n\in\mathbb N}$ such that for any $n \in \N$
\begin{itemize}
\item[($\alpha_n$)] \ $D_n\,\subset \, F^{(m)}_{mk_n}$;
\item[($\beta_n$)] \ the distance between any two consecutive points of $D_n$ does not exceed $\frac{mr_{k_n}}{2}$;
\item[($\gamma_n$)] \ $\max D_n - \min D_n \geq (m-2) \sum_{i=1}^{k_n} a_i$.
\end{itemize}
Define
$$L:= \{1,2, \ldots, \lfloor\frac{m}{2} \rfloor \},$$
%$$L_0:= L\cup\{0\},$$
%$$L_0^-:= L_0\setminus\{\lceil\frac{m}{2} \rceil\},$$
$$R:=\{\lfloor\frac{m}{2} \rfloor,\lfloor\frac{m}{2} \rfloor +1, \ldots, m-1 \},$$
%$$R_m : =R \cup \{m\},$$
$$H^1_1:=\{\sum_{i=1}^{k_1} h_ia_i\colon \forall_{i\leq k_1}\,\, h_i \in
L\},$$
$$H^2_1:=\{\sum_{i=1}^{k_1} h_ia_i\colon \forall_{i\leq k_1}\,\,h_i \in
R\},$$
$$H^3_1:=\{\sum_{i=1}^{k_1} h_ia_i\colon \exists_{1<j\leq k_1}\,\, \left( h_j \in
L+\lfloor \frac{m}{2} \rfloor -1\wedge h_1 \in L-1 \wedge \forall_{1<i <j} \,\, h_i \in L+\lfloor \frac{m}{2} \rfloor-2 \wedge \forall_{i > j} \,\, h_i \in L \right) \},$$
$$H^4_1:=\{\sum_{i=1}^{k_1} h_ia_i\colon \exists_{1\leq j< k_1}\,\, \left( h_j \in
R+1 \wedge h_{k_1} \in R -\lfloor \frac{m}{2} \rfloor +1 \wedge \forall_{i <j} \,\, h_i \in R \wedge \forall_{k_1 > i > j} \,\, h_i \in R -\lfloor \frac{m}{2} \rfloor +2 \right) \},$$
$$D_1:=H^1_1\cup H^2_1\cup H^3_1 \cup H^4_1.$$

First, observe that $D_1 \subset F^{(m)}_{mk_1}$, because $F^{(m)}_{mk_1} = \{\sum_{i=1}^{k_1} h_i a_i\colon h_i \in \{0,1, \ldots,m\}\}.$

Now, we will prove $(\beta_1)$. We will do it, proving that for every but one $h \in D_1$, there is $g \in D_1$ such that $g>h$ and $g-h \leq \frac{mr_{k_1}}{2}.$

First, we will prove the following fact. \\
{\bf Claim 1} For any $h=\sum_{i=1}^{k_1} h_ia_i \in F^{(m)}_{mk_n}$ if $g=\sum_{i=1}^{k_1} g_ia_i \in F^{(m)}_{mk_n}$ is such that for some $j \in \{2,3,  \ldots, k_1\}$,
%$$g_i := \left\{ \begin{array}{ccc}
%h_j - \lfloor \frac{m}{2} \rfloor +1 \;\text{ if }\; i = j \\
%h_{j-1}+1 \;\text{ if }\; i=j-1\\
%h_i \;\text{ for the remaining }\; i,%
%\end{array}%
%\right. $$
$$g_i := \begin{cases}
h_j - \lfloor \frac{m}{2} \rfloor +1 \;&\text{ if }\; i = j \\
h_{j-1}+1 \;&\text{ if }\; i=j-1\\
h_i \;&\text{ for the remaining }\; i,%
\end{cases} $$
% $g_j = h_j - \lfloor \frac{m}{2} \rfloor +1$, $g_{j-1} = h_{j-1}+1$ and $g_i = h_i$ for the remaining $i$,
then $g-h = \frac{mr_{k_1}}{2}.$

Using (\ref{r}) and (\ref{q}), we obtain
$$g-h = \sum_{i=1}^{k_1} (g_i-h_i)a_i = (-\lfloor \frac{m}{2} \rfloor +1)a_j + a_{j-1} = (-\lfloor \frac{m}{2} \rfloor +1)q_jr_j + q_{j-1}r_{j-1} $$$$\stackrel{(\ref{r}), (\ref{q})}{=} \frac{(-\lfloor \frac{m}{2} \rfloor +1)q_jr_{j-1}}{1+q_j} + \frac{(\lfloor \frac{m}{2}\rfloor-1 )q_{j}r_{j-1}}{1+q_j}+\frac{mr_{k_1}}{2} =\frac{mr_{k_1}}{2},$$
which proves Claim 1.

Now, we will prove \\
{\bf Claim 2} For any $h=\sum_{i=1}^{k_1} h_ia_i \in F^{(m)}_{mk_n}$ if $g=\sum_{i=1}^{k_1} g_ia_i$ is such that
%$$g_i := \left\{ \begin{array}{ccc}
%h_1 - 1 \;&\text{ if }&\; i = 1 \\
%h_{k_1} + \lfloor \frac{m}{2}\rfloor \;&\text{ if }&\; i=k_1\\
%h_i+ \lfloor \frac{m}{2}\rfloor-2 \;&\text{ for the remaining }&\; i,%
%\end{array}%
%\right. $$

$$g_i := \begin{cases}
h_1 - 1 \;&\text{ if }\; i = 1 \\
h_{k_1} + \lfloor \frac{m}{2}\rfloor \;&\text{ if }\; i=k_1\\
h_i+ \lfloor \frac{m}{2}\rfloor-2 \;&\text{ for the remaining }\; i,%
\end{cases}%
$$
%
%$g_1 = h_1-1$, $g_{k_1} = h_{k_1} + \lfloor \frac{m}{2}\rfloor$ and $g_i = h_i + \lfloor \frac{m}{2}\rfloor-2$ for the remaining $i$,
then $g-h \in (0, \frac{mr_{k_1}}{2}].$

%Let $g^1=\sum_{i=1}^{k_1} g^1_ia_i$ be such that $g^1_{k_1} = g_{k_1} - \lfloor \frac{m}{2} \rfloor +1 = h_{k_1} +1 $, $g^1_{k_1-1} = g_{k_1-1}+1 = h_{k_1-1} + \lfloor \frac{m}{2}\rfloor -1$ and $g^1_i = g_i$ for the remaining $i$. By Claim 1, $g^1-g = \frac{mr_{k_1}}{2}$. Similarly, for $g^2=\sum_{i=1}^{k_1} g^2_ia_i$ such that $g^2_{k_1-1} = g_{k_1-1} - \lfloor \frac{m}{2} \rfloor +1 = h_{k_1}$, $g^2_{k_1-2} = g^1_{k_1-2}+1 = h_{k_1-1} + \lfloor \frac{m}{2}\rfloor -1$ and $g^2_i = g^1_i$ for the remaining $i$ we have $g^2-g^1 = \frac{mr_{k_1}}{2}$.
For $j \in \{1,2,  \ldots, k_1-2\}$ define $g^j=\sum_{i=1}^{k_1} g^j_ia_i$, where
$$g^j_{i} := \begin{cases}
h_{k_1} +1 \;&\text{ if }\; i=k_1\\
h_i\;&\text{ if }\; k_1>i >k_1-j \\
h_{k_1-j} + \lfloor \frac{m}{2}\rfloor -1\;&\text{ if }\; i=k_1-j \\
g_i\;&\text{ if }\; i<k_1-j.%
\end{cases}%
$$
%
%$g^j_{k_1} = h_{k_1} +1 $, $g^j_{i} = h_i$ for $k_1> i> k_1-j$, $g^j_{k_1-j} = h_{k_1-1} + \lfloor \frac{m}{2}\rfloor -1$ and $g^j_i = g_i$ for the remaining $i$.
We also define $g^{k_1-1}$ such that
$$g^{k_1-1}_{i} := \begin{cases}
h_{k_1} +1 \;&\text{ if }\; i=k_1\\
h_i\;&\text{ if }\; i<k_1.\\
\end{cases}%
$$
%$g^{k_1-1}_{k_1} = h_{k_1}+1$ and $g^{k_1-1}_i = h_i$ for $i < k_1$.
Observe that
$g^1_{k_1} = h_{k_1} +1 = g_{k_1} - \lfloor \frac{m}{2} \rfloor +1$, $g^1_{k_1-1} = h_{k_1-1} + \lfloor \frac{m}{2}\rfloor -1= g_{k_1-1}+1$ and $g^1_i = g_i$ for the remaining $i$. Similarly, for $j \in \{2, 3,  \ldots, k_1-2\}$ we have $g^j_{k_1-j+1} = h_{k_1-j+1} = g^{j-1}_{k_1-j+1} - \lfloor \frac{m}{2} \rfloor+1$, $g^j_{k_1-j} = h_{k_1-j} + \lfloor \frac{m}{2}\rfloor -1 = g^{j-1}_{k_1-j} + 1$ and $g^{j}_i = g^{j-1}_i$ for the remaining $i$. Also, $g^{k_1-1}_{2} = h_{2} = g^{k_1-2}_2 - \lfloor \frac{m}{2} \rfloor+1$, $g^{k_1-1}_1 = h_{1} = g^{k_1-2}_{1} + 1$ and $g^{k_1-1}_i = g^{k_1-2}_i$ for the remaining $i$.
Hence, by Claim 1, for $j \in \{1,2,  \ldots, k_1-1\}$ we have $g^j - g^{j-1} = \frac{mr_{k_1}}{2}$ (where $g^0:=g$).
Therefore,
$$g^{k_1-1} - g = (k_1-1) \cdot \frac{mr_{k_1}}{2}.$$ On the other hand, $g^{k_1-1}$ is such that $g^{k_1-1}_{k_1} = h_{k_1}+1$ and $g^{k_1-1}_i = h_i$ for $i < k_1$. So,
$$g^{k_1-1} - h = a_{k_1} = p_1r_{k_1}.$$
Since $k_1 = \lceil \frac{2p_1}{m} \rceil,$ we have
$$g - h = g^{k_1-1} -h - (g^{k_1-1}-g)= p_1r_{k_1} - (k_1-1) \cdot \frac{mr_{k_1}}{2} =
p_1r_{k_1} - (\lceil \frac{2p_1}{m}-1 \rceil)\cdot\frac{mr_{k_1}}{2} $$$$\leq r_{k_1}\left(p_1- (\frac{2p_1}{m}-1) \cdot\frac{m}{2} \right) = \frac{m{r_{k_1}}}{2}.$$
We also have
$$g - h > r_{k_1}\left(p_1- (\frac{2p_1}{m}) \cdot\frac{m}{2} \right) =0.$$
Thus, $g - h \in (0, \frac{mr_{k_1}}{2}],$ which finishes the proof of Claim 2.

For $j \in \{2,  \ldots, k_1\}$ denote by $b^j$ the sequence of $k_1$ terms such that
$$b^j_{i} := \begin{cases}
(-\lfloor \frac{m}{2} \rfloor +1) \;&\text{ if }\; i=j\\
1\;&\text{ if }\; i=j-1 \\
0\;&\text{ for the remaining }\; i.%
\end{cases}%
$$
In Claim 1. we proved that
for any $h = \sum_{i=1}^{k_1}h_ia_i \in F^{(m)}_{mk_n}$ if $g_i = h_i +b^j_i$ for some $j\in \{2,  \ldots, k_1\}$ and $g = \sum_{i=1}^{k_1} g_ia_i$, then $g-h = \frac{mr_{k_1}}{2}.$

By $B$ denote the sequence of $k_1$ terms such that
$$B_{i} := \begin{cases}
-1 \;&\text{ if }\; i=1\\
\lfloor \frac{m}{2} \rfloor\;&\text{ if }\; i=k_1 \\
(\lfloor \frac{m}{2} \rfloor -2) \;&\text{ for the remaining }\; i.%
\end{cases}%
$$
% $b_1 = -a_1$, $b_{k_1} = \lfloor \frac{m}{2} \rfloor \cdot a_{k_1}$ and $b_i = (\lfloor \frac{m}{2} \rfloor -2) a_i$ for the remaining $i$.
From Claim 2. we know that
for any $h = \sum_{i=1}^{k_1}h_ia_i \in F^{(m)}_{mk_n}$ if $g_i = h_i +B_i$ and $g = \sum_{i=1}^{k_1} g_ia_i$, then $g-h \in (0, \frac{mr_{k_1}}{2}].$

Now, we will show that for every (but one) $h \in D_1$ there is $g =\sum_{i=1}^{k_1}g_ia_i \in D_1$ such that $g-h\in (0, \frac{mr_{k_1}}{2}]$. Let $h \in D_1$. Consider the cases.

1. $h=\sum_{i=1}^{k_1} h_ia_i \in H^1_1$, that is, $h_i \in L$ for all $i\leq k_1$.
Consider the subcases.

1.1. $h_{k_1} \neq \lfloor \frac{m}{2} \rfloor$. Let $g_i= h_i + B_i$ for all $i$. Then $g_{k_1} \in L - 1 + \lfloor \frac{m}{2} \rfloor$, $g_1 \in L-1$ and $g_i \in L +\lfloor \frac{m}{2} \rfloor -2$ for the remaining $i$, so $g \in H^1_3$ (with $j = k_1$) and, by Claim 2, $g-h \in (0, \frac{mr_{k_1}}{2}]$.

1.2. $h_{k_1} = \lfloor \frac{m}{2} \rfloor$ and there is $i < k_1$ such that $h_i < \lfloor \frac{m}{2} \rfloor$. Let $1<j\leq k_1$ be such that $h_j =\lfloor \frac{m}{2} \rfloor$ and $h_{j-1} < \lfloor \frac{m}{2} \rfloor$. Let $g_i = h_i + b^j_i$ for all $i \leq k_1$. Then $g_j = 1 \in L$, $g_{j-1} = h_{j-1} + 1 \in L$ and $g_i = h_i \in L$ for the remaining $i$, so $g \in H^1_1$. Moreover, by Claim 1, $g-h = \frac{mr_{k_1}}{2}.$

1.3. $h_i = \lfloor \frac{m}{2} \rfloor$ for all $i$.
Let $g_i = h_i + b^{k_1}_i$ for all $i \leq k_1$. Then $g_{k_1} = 1 \in R-\lfloor\frac{m}{2} \rfloor +1 $, $g_{k_1-1} = \lfloor\frac{m}{2} \rfloor +1 \in R+1$ and $g_i = \lfloor\frac{m}{2} \rfloor \in R$ for the remaining $i$, so $g \in H^4_1$ (with $j= k_1-1$). Moreover, by Claim 1, $g-h = \frac{mr_{k_1}}{2}.$

2. $h=\sum_{i=1}^{k_1} h_ia_i \in H^2_1$.
Let $g_i = h_i + b^{k_1}_i$ for all $i \leq k_1$. Then $g_{k_1} \in R-\lfloor\frac{m}{2} \rfloor +1 $, $g_{k_1-1} \in R+1$ and $g_i\in R$ for the remaining $i$, so $g \in H^4_1$ (with $j= k_1-1$). Moreover, by Claim 1, $g-h = \frac{mr_{k_1}}{2}.$

3. $h=\sum_{i=1}^{k_1} h_ia_i \in H^3_1$. So, there is $1<j \leq k_1$ such that $h_j \in L + \lfloor\frac{m}{2} \rfloor-1$, $h_1 \in L-1$, $h_i \in L + \lfloor\frac{m}{2} \rfloor -2$ for all $1<i<j$ and $h_i \in L$ for $i > j$. Let $g_i = h_i + b^{j}_i$ for all $i \leq k_1$. If $j > 2$, then $g_j \in L$ and $g_{j-1} \in L + \lfloor\frac{m}{2} \rfloor-1$, so still $g \in H^3_1.$ If $j = 2$, then for all $i\leq k_1$ we have $g_i \in L$, so $g \in H^1_1$. In both cases $g \in D_1$ and, by Claim 1, $g-h = \frac{mr_{k_1}}{2}.$

4. $h=\sum_{i=1}^{k_1} h_ia_i \in H^4_1$. So, there is $1\leq j < k_1$ such that $h_j \in R +1$, $h_{k_1} \in R-\lfloor\frac{m}{2} \rfloor+1$, $h_i \in R - \lfloor\frac{m}{2} \rfloor +2$ for all $k_1>i>j$ and $h_i \in R$ for $i < j$. Consider the subcases.

4.1. $j > 1$. Let $g_i = h_i + b^{j}_i$ for all $i \leq k_1$. Then $g_j \in R-\lfloor\frac{m}{2} \rfloor+2 $ and $g_{j-1} \in R+1$, so still $g \in H^4_1.$ Moreover, by Claim 1, $g-h = \frac{mr_{k_1}}{2}.$

4.2. $j=1$, $h_{k_1} \neq m-\lfloor\frac{m}{2} \rfloor.$ Let $g_i = h_i +B_i$ for all $i \leq k_1$. Then since $h_{k_1} < m-\lfloor\frac{m}{2} \rfloor$, we have $g_{k_1} \in R$. We also have $g_i \in R$ for $i < k_1$, so $g \in H^2_1$. Moreover, by Claim 2, $g-h \in (0, \frac{mr_{k_1}}{2}].$

4.3. $j=1$, $h_{k_1} = m-\lfloor\frac{m}{2} \rfloor$ and either there is $1< l <k_1$ such that $h_l \neq m - \lfloor \frac{m}{2}\rfloor +1$ or $h_1 \neq m$ (then $l=1$). If $l = k_1-1$, then let $g_i = h_i +b^{k_1}_i$ for all $i \leq k_1$. We have
$g_{k_1} = m - 2\cdot \lfloor \frac{m}{2}\rfloor +1 \in \{1,2\} \subset R-\lfloor \frac{m}{2}\rfloor+1$ and since $h_{k_1-1} < m - \lfloor \frac{m}{2}\rfloor +1$, we get
$g_{k_1-1} \in R- \lfloor \frac{m}{2}\rfloor +2$. Therefore, $g \in H^4_1$.
Now, without loss of generality suppose that $1< l <k_1-1$ is such that $h_{l+1} = m - \lfloor \frac{m}{2}\rfloor +1$. Then let $g_i = h_i +b^{l+1}_i$ for all $i \leq k_1$. We have $g_{l+1} = m - 2\cdot \lfloor \frac{m}{2}\rfloor +2 \in \{2,3\} \subset R-\lfloor \frac{m}{2}\rfloor+2$ and since $h_{j} < m - \lfloor \frac{m}{2}\rfloor +1$, we get
$g_{j} \in R- \lfloor \frac{m}{2}\rfloor +2$. Therefore, $g \in H^4_1$.
If $l=1$, then we can suppose that $h_{2} = m - \lfloor \frac{m}{2}\rfloor +1$ (in the other case we will have the situation as above with $l > 1$). Let $g_i = h_i +b^{2}_i$ for all $i \leq k_1$. We have $g_{2} = m - 2\cdot \lfloor \frac{m}{2}\rfloor +2 \in \{2,3\} \subset R-\lfloor \frac{m}{2}\rfloor+2$ and since $h_{1} < m $, we get
$g_{j} \in R+1$. Therefore, $g \in H^4_1$.
In all of above cases we also have, by Claim 1, $g-h = \frac{mr_{k_1}}{2}$.

This finishes the proof of $(\beta_1)$ as the point $h=\sum_{i=1}^{k_1} h_ia_i \in H^4_1$, where $h_{k_1} = m - \lfloor \frac{m}{2}\rfloor$, $h_1 = m$ and $h_ i = m-\lfloor \frac{m}{2}\rfloor+1$ for the remaining $i$ is the largest number in $D_1$.

As $\sum_{i=1}^{k_1} a_i \in H^1_1 \subset D_1$ and $(m-1)\sum_{i=1}^{k_1} a_i \in H^2_1 \subset D_1$,
we have $\max D_1 \geq (m-1) \sum_{i=1}^{k_1} a_i$ and $\min D_1 \leq \sum_{i=1}^{k_1} a_i \in H^1_1 \subset D_1$. Therefore, $\max D_1 - \min D_1 \geq (m-2) \sum_{i=1}^{k_1},$ which proves $(\gamma_1)$.

Suppose that for some $n \in \N$ we have defined the set $D_n$ satisfying conditions $(\alpha_n), (\beta_n)$ and $(\gamma_n)$.

Define
$$ v_{n+1}: =k_{n+1}-j_{n+1},$$
$$H^1_{n+1}:=\{\sum_{i=v_{n+1}}^{k_{n+1}} h_ia_i\colon \forall_{i\leq k_{n+1}}\,\, h_i \in
L\},$$
$$H^2_{n+1}:=\{\sum_{i=v_{n+1}}^{k_{n+1}} h_ia_i\colon \forall_{i\leq k_{n+1}}\,\,h_i \in
R\},$$
%\begin{eqnarray*}
$$H^3_{n+1} :=\{\sum_{i=v_{n+1}}^{k_{n+1}} h_ia_i\colon \exists_{v_{n+1}<j\leq k_{n+1}}\,\, \left( h_j \in
L+\lfloor \frac{m}{2} \rfloor -1\wedge h_{v_{n+1}} \in L-1 \right.$$$$ \left.\wedge \forall_{v_{n+1}<i <j} \,\, h_i \in L+\lfloor \frac{m}{2} \rfloor-2 \wedge \forall_{i > j} \,\, h_i \in L \right) \},$$
%\end{eqnarray*}

$$H^4_{n+1}:=\{\sum_{i=v_{n+1}}^{k_{n+1}} h_ia_i\colon \exists_{v_{n+1}\leq j< k_{n+1}}\,\, \left( h_j \in
R+1 \wedge h_{k_{n+1}} \in R -\lfloor \frac{m}{2} \rfloor +1 \wedge \forall_{i <j} \,\, h_i \in R\right.$$$$ \wedge \left. \forall_{k_{n+1} > i > j} \,\, h_i \in R -\lfloor \frac{m}{2} \rfloor +2 \right) \},$$
$$H_{n+1}: =\bigcup_{i=1}^4 H^i_{n+1},$$
$$G^i = \{0,a_i, 2a_i,  \ldots, ma_i\} \qquad \text{for $i\in\mathbb N$},$$
$$G_{n+1} = \{\sum_{i={k_n+1}}^{v_{n+1}-1} h_i a_i \colon \forall_{i} h_i \in \{0,1, \ldots,m\}\},$$
$$D_{n+1} = D_n + G_{n+1} + H_{n+1}.$$

By the definition, $(\alpha_{n+1})$ is satisfied.

Now, we will prove $(\beta_{n+1})$. First, observe that definition of $a_{v_{n+1}},  \ldots, a_{k_{n+1}}$ is analogous to the definition of $a_1,  \ldots, a_{k_1}$, so we can repeat the reasoning from the proof of $(\beta_1)$ to show that for every but one point $h \in H_{n+1}$ there is $g \in H_{n+1}$ such that $g-h \in (0, \frac{mr_{k_{n+1}}}{2}].$ We also know that $$[\min H_{n+1}, \max H_{n+1}] \supset [\sum_{i=v_{n+1}}^{k_{n+1}} a_i, (m-1)\sum_{i=v_{n+1}}^{k_{n+1}} a_i].$$

Consider the set $G^{v_{n+1}-1} + H_{n+1}.$
For $j \in \{1,2,  \ldots, m\}$ we have
$$ (j-1) a_{v_{n+1}-1} + (m-1) \sum_{i=v_{n+1}}^{k_{n+1}} a_i - \left(ja_{v_{n+1}-1}+\sum_{i=v_{n+1}}^{k_{n+1}} a_i \right) = -a_{v_{n+1}-1} + (m-2)\sum_{i=v_{n+1}}^{k_{n+1}} a_i $$$$=-(m-3)r_{v_{n+1}-1}+(m-2)\sum_{i=v_{n+1}}^{k_{n+1}}a_i = -(m-3)\sum_{i=v_{n+1}}^\infty a_i+(m-2)\sum_{i=v_{n+1}}^{k_{n+1}}a_i = \sum_{i=v_{n+1}}^{k_{n+1}}a_i -(m-3) \cdot\sum_{i=k_{n+1}+1}^\infty a_i $$$$=\sum_{i=v_{n+1}}^{k_{n+1}}a_i -(m-3) r_{k_{n+1}}\geq a_{k_{n+1}}-(m-3)r_{k_{n+1}} = p_{n+1}r_{k_{n+1}}- (m-3)r_{k_{n+1}} > 0. $$
Therefore,
$$\left( (j-1) a_{v_{n+1}-1}+[\sum_{i=v_{n+1}}^{k_{n+1}} a_i, (m-1)\sum_{i=v_{n+1}}^{k_{n+1}} a_i] \right) \cap \left( j a_{v_{n+1}-1}+[\sum_{i=v_{n+1}}^{k_{n+1}} a_i, (m-1)\sum_{i=v_{n+1}}^{k_{n+1}} a_i] \right)\neq \emptyset,$$
so, for any $h \in G^{v_{n+1}-1}+ H_{n+1}$, there is $g \in G^{v_{n+1}-1}+H_{n+1}$ such that $g-h \in (0, \frac{mr_{k_{n+1}}}{2}].$

Suppose that for some $l\in \{ k_{n} + 2,  \ldots, v_{n+1}-1\}$ we have proved that for any $h \in G^l+G^{l+1}+ \ldots+G^{v_{n+1}-1}+ H_{n+1}$ there is $g \in G^l+G^{l+1}+ \ldots+G^{v_{n+1}-1}+H_{n+1}$ such that $g-h \in (0, \frac{mr_{k_{n+1}}}{2}].$
We will prove that for any $h \in G^{l-1}+G^{l}+ \ldots+G^{v_{n+1}-1}+ H_{n+1}$ there is $g \in G^{l-1}+G^{l}+ \ldots+G^{v_{n+1}-1}+H_{n+1}$ such that $g-h \in (0, \frac{mr_{k_{n+1}}}{2}].$

For $j \in \{1,2,  \ldots, m\}$, since $m > 6$ and $j_{n+1}=\lceil 2p_{n+1}{m} \rceil -1 >2,$  using (\ref{r}), we obtain
$$ (j-1) a_{l-1} +ma_l+ \ldots+ma_{v_{n+1}-1} +(m-1) \sum_{i=v_{n+1}}^{k_{n+1}} a_i - \left(ja_{l-1}+\sum_{i=v_{n+1}}^{k_{n+1}} a_i \right) $$$$= -a_{l-1} + (m-2)\sum_{i=l}^{k_{n+1}} a_i +2a_l+2a_{l+1}+ \ldots+2a_{v_{n+1}-1} \geq -(m-\frac{1}{2})r_{l-1}+(m-2)\sum_{i=l}^{k_{n+1}}a_i + 2a_l $$$$
=-(m-\frac{1}{2})\sum_{i=l}^\infty a_i+(m-2)\sum_{i=l}^{k_{n+1}}a_i + 2a_l =-\frac{3}{2}\sum_{i=l}^\infty a_i-(m-2)\sum_{i=k_{n+1}+1}^{\infty}a_i + 2a_l$$$$
= -\frac{3}{2} r_{l-1} -(m-2) r_{k_{n+1}} + 2a_l = -\frac{3}{2}(1+q_l)r_l+2q_lr_l -(m-2) r_{k_{n+1}} $$$$= \frac{1}{2}q_lr_l-\frac{3}{2}r_l - (m-2) r_{k_{n+1}} \geq \frac{1}{2}(m-3)r_l- \frac{3}{2}r_l -(m-2) r_{k_{n+1}} = (\frac{m}{2}-3)r_l- (m-2) r_{k_{n+1}} $$$$=(\frac{m}{2}-3)(1+q_{l+1})(1+q_{l+2})r_{l+2}- (m-2) r_{k_{n+1}} \geq(\frac{m}{2}-3)\cdot4r_{l+2}- (m-2) r_{k_{n+1}} $$$$\geq (2m-12)(p_{n+1}+1)r_{k_{n+1}} - (m-2) r_{k_{n+1}}  \stackrel{m>6}{\geq} 2(p_{n+1}+1)r_{k_{n+1}}- (m-2) r_{k_{n+1}} > 0. $$
Therefore,
$$\left( (j-1) a_{l-1}+[\sum_{i=v_{n+1}}^{k_{n+1}} a_i, m\sum_{i=j}^{v_{n+1}-1}a_i + (m-1)\sum_{i=v_{n+1}}^{k_{n+1}} a_i] \right) \cap \left( j a_{l-1}+[\sum_{i=v_{n+1}}^{k_{n+1}} a_i, m\sum_{i=j}^{v_{n+1}-1}a_i + (m-1)\sum_{i=v_{n+1}}^{k_{n+1}} a_i] \right)\neq \emptyset,$$
so for any
$h \in G^{l-1}+G^{l}+ \ldots+G^{v_{n+1}-1}+ H_{n+1}$ there is $g \in G^{l-1}+G^{l}+ \ldots+G^{v_{n+1}-1}+H_{n+1}$ such that $g-h \in (0, \frac{mr_{k_{n+1}}}{2}].$

By induction we obtain that for any
$h \in G_{n+1}+ H_{n+1}$ there is $g \in G_{n+1}+H_{n+1}$ such that $g-h \in (0, \frac{mr_{k_{n+1}}}{2}].$

Now, let $d_j, d_{j+1} \in D_{n}$ be such that $d_{j+1}>d_j$, and there is no point $f \in D_n$ such that $d_{j+1} > f > d_j$. By $(\beta_n)$, we know that $d_{j+1} - d_j \leq \frac{mr_{k_n}}{2}.$
We have
$$d_j + m\sum_{i=k_n+1}^{v_{n+1}-1}a_i+(m-1) \sum_{i=v_{n+1}}^{k_{n+1}}a_i - \left(d_{j+1} + \sum_{i=v_{n+1}}^{k_{n+1}}a_i\right) $$$$\geq -\frac{mr_{k_n}}{2} + \left( m\sum_{i=k_n+1}^{\infty}a_i -m\sum_{i=v_{n+1}}^{k_{n+1}}a_i - m \sum_{i=k_{n+1}+1}^\infty a_i \right) + (m-2)\sum_{i=v_{n+1}}^{k_{n+1}}a_i = -\frac{mr_{k_n}}{2} + mr_{k_n} -2\sum_{i=v_{n+1}}^{k_{n+1}}a_i - m r_{k_{n+1}}$$$$=
 \frac{mr_{k_n}}{2} - 2\sum_{i=v_{n+1}}^{k_{n+1}}a_i - mr_{k_{n+1}} > \frac{mr_{k_n}}{2} - 2r_{k_n} - \frac{m}{1+p_{n+1}} r_{k_{n+1}-1} >\frac{mr_{k_n}}{2}-3r_{k_n} > 0.$$
Therefore,
$$\left( d_j+\left[\sum_{i=v_{n+1}}^{k_{n+1}} a_i,m\sum_{i=k_n+1}^{v_{n+1}-1}a_i+(m-1) \sum_{i=v_{n+1}}^{k_{n+1}}a_i\right]\right) \cap \left( d_{j+1}+\left[\sum_{i=v_{n+1}}^{k_{n+1}} a_i,m\sum_{i=k_n+1}^{v_{n+1}-1}a_i+(m-1) \sum_{i=v_{n+1}}^{k_{n+1}}a_i\right]\right) \neq \emptyset.$$
Since
$$D_{n+1}\ \cap\ \left[\sum_{i=v_{n+1}}^{k_{n+1}} a_i,m\sum_{i=k_n+1}^{v_{n+1}-1}a_i+(m-1) \sum_{i=v_{n+1}}^{k_{n+1}}a_i\right]\ \subset\ G_{n+1}+H_{n+1}, $$
by arbitrariness of $d_j$, we infer $(\beta_{n+1})$.

To prove $\gamma_{n+1}$ observe that
$$\max D_{n+1} \geq \max D_n +m\sum_{i=k_n+1}^{v_{n+1}-1}a_i+(m-1) \sum_{i=v_{n+1}}^{k_{n+1}}a_i \geq (m-1)\sum_{i=1}^{k_n}a_i + (m-1) \sum_{i=v_{n+1}}^{k_{n+1}}a_i = (m-1)\sum_{i=1}^{k_{n+1}}a_i.$$
Similarly,
$\min D_{n+1} \leq \sum_{i=1}^{k_{n+1}}a_i$. Therefore, we have
$\max D_n -\min D_n \geq (m-2)\sum_{i=1}^{k_{n+1}}a_i.$

Since $\lim\limits_{n\to \infty} (m-2)\sum_{i=1}^{k_{n}}a_i > 0,$ we get that $C_m$ is a Cantorval, by Proposition \ref{charint}.

Now, suppose that $2 \leq m \leq 6$ and $m < p$.
Since $4m > 6$ we know that there is a Cantor set $C$ such that $C_k$ is a Cantor set for $k < 4m$, a Cantorval for $4m\leq k < 4p$ and an interval for $k \geq 4p.$
In particular, $K:=C_4$ is a Cantor set. Denote by $K_k$ an algebraic sum of $k$ copies of $K$. Observe that $K_k = C_{4k}$. So, if $k < m$, then $K_{k}$ is a Cantor set, if $m \leq k < p$, then $K_k$ is a Cantorval and if $k \geq p$, then $K_k$ is an interval, so $K$ is the required Cantor set.
\end{proof}

\section{Decomposition of interval-filling sequence}
In the whole section we will consider an interval-filling sequence $(a_n)$. We divide its terms into two infinite subsequences $(y_n)$ and $(z_n)$, that is, $(y_n)\cup (z_n)=(a_n)$. By $r_n,r_n^{(y)},r_n^{(z)}$ we denote the tails of the sequences $(a_n),(y_n)$ and $(z_n)$ respectively. We assume that all three considered sequences are nonincreasing.

\begin{theorem}\label{nadwaprzedzialy} Let $(a_n)$ be an interval-filling sequence.
If there exists a decomposition $(a_n) = (y_n) \cup (z_n)$ such that both $(y_n)$ and $(z_n)$ are interval-filling, then there exists $k$ such that for each $n\geq k$ the inequality $a_{n-1}+a_n\leq r_{n}$ holds.
\begin{proof}
Without loss of generality we may assume that $y_1=a_1$. Then there exists $k$ such that $y_l=a_l$ for all $l<k$ and $z_1=a_k$. Fix $v\geq k$. We may assume that $a_v=y_j$ for some $j$ (the proof when $a_v\in (z_n)$ is similar). Let $w<v$ be such that $a_w=z_i$ and for every $r\in\{w+1,w+2,\ldots,v-1\}$ we have $a_r\in(y_n)$, that is $a_w$ is the smallest element from $(z_n)$ which is not less than $y_j$. 
%{\color{red} te tablice trzeba by wyjasnic}
%\[ \begin{array}{cccccccccccc}
%a_1 & \ldots & a_{k-1} & a_k & a_{k+1} & \ldots & a_w & a_{w+1} & \ldots& a_{v-1}& a_v & \leq \\
% y_1 & \ldots & y_{k-1} & & \ldots &\ldots & & y_{j-v+w+1} & \ldots & y_{j-1} & y_j & \leq \\
% & & & z_1 & \ldots &\ldots & z_i & & & & & \leq \end{array}\]
By the assumption, we have $z_i\leq\sum_{q>i}z_q=r_i^{(z)}$ and $y_j\leq\sum_{p>j}y_p=r_j^{(y)}$. Note that $(a_n)_{n>v}=(y_p)_{p>j}\cup (z_q)_{q>i}$, since $z_{i+1}\leq y_j\leq z_{i}$. Hence
$$a_{v-1}+a_v\leq z_i+y_j\leq r_i^{(z)}+r_j^{(y)}=r_{v}.$$
\end{proof}
\end{theorem}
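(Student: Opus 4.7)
The plan is to exploit both interval-filling hypotheses simultaneously: $y_j \le r_j^{(y)}$ for every $j$, and $z_i \le r_i^{(z)}$ for every $i$. Without loss of generality, assume $y_1 = a_1$, and let $k$ be the smallest index with $a_k \in (z_n)$, so that $a_k = z_1$. The claim is that $a_{n-1} + a_n \le r_n$ holds for every $n \ge k$.

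Fix $v \ge k$ and handle the case $a_v = y_j \in (y_n)$ first; the case $a_v \in (z_n)$ will be entirely symmetric. Since $a_v \in (y_n)$ we must have $v > k$, so there exists a largest index $w < v$ with $a_w \in (z_n)$; say $a_w = z_i$. The key combinatorial observation is that the tail sequence $(a_n)_{n > v}$ is exactly the monotone merge of $(y_p)_{p > j}$ and $(z_q)_{q > i}$: by position $v$ the terms $y_1, \ldots, y_j$ and $z_1, \ldots, z_i$ have all been used (no $z$-term appears between positions $w+1$ and $v$ by maximality of $w$), and the required monotonicity $z_{i+1} \le y_j$ and $y_{j+1} \le z_i$ follows because $(a_n)$ is nonincreasing and both $y_j = a_v$, $z_i = a_w$ appear before $a_{v+1}$. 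Consequently $r_v = r_j^{(y)} + r_i^{(z)}$.

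Since $w \le v-1$ and $(a_n)$ is nonincreasing, one has $a_{v-1} \le a_w = z_i$. Combining this with the two interval-filling bounds yields
\[ a_{v-1} + a_v \;\le\; z_i + y_j \;\le\; r_i^{(z)} + r_j^{(y)} \;=\; r_v, \]
as desired. The symmetric case $a_v \in (z_n)$ swaps the roles of $(y_n)$ and $(z_n)$; the boundary instance $v = k$ falls into this case with $a_v = z_1$, $w = v-1 = k-1$, and $a_w = y_{k-1}$, so the argument goes through uniformly.

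I expect no substantive obstacle beyond the bookkeeping that certifies the existence of the auxiliary index $w$, which is exactly what forces the threshold $k$ to be chosen so that both subsequences have already contributed a term to the prefix $a_1, \ldots, a_{v-1}$; once $w$ is located, the identification $r_v = r_j^{(y)} + r_i^{(z)}$ is the only real content of the proof, and the double interval-filling assumption does the rest.
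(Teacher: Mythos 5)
Your proof is correct and follows essentially the same route as the paper's: after fixing $v \ge k$, you locate the last earlier index $w$ belonging to the other subsequence, identify $r_v = r_j^{(y)} + r_i^{(z)}$, and then apply both interval-filling bounds to $a_{v-1} + a_v \le z_i + y_j$. The only difference is that you spell out the bookkeeping (why $w$ exists, why $a_{v-1} \le a_w$, and the boundary instance $v=k$) which the paper leaves implicit.
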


\begin{theorem}
Let $(a_n)$ be an interval-filling sequence.
If there exists a decomposition $(a_n) = (y_n) \cup (z_n)$ such that both $(y_n)$ and $(z_n)$ are fast convergent, then for infinitely many $n$ the inequality $a_{n-1}+a_n> r_{n}$ holds.
\begin{proof}
Let $v$ be such that $a_{v-1}\in (y_n)$ and $a_{v}\in (z_n)$ (or vice-versa). Then $a_{v-1}=y_j$ and $a_v=z_i$ for some $i,j$. We have
$$a_{v-1}+a_v=y_j+z_i> r_j^{(y)}+r_i^{(z)}=r_v.$$
\end{proof}
\end{theorem}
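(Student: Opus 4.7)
The plan is to mirror the argument from Theorem \ref{nadwaprzedzialy}, exploiting the fact that the inequalities in the fast convergence hypothesis go the opposite way to those in the interval-filling hypothesis, so the conclusion flips from $\leq$ to $>$.

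First I would observe that since both subsequences $(y_n)$ and $(z_n)$ are infinite, the partition $\mathbb N = M \sqcup N$ witnessing the decomposition must have infinitely many ``transition indices'' $v$, i.e.\ indices such that $v-1$ and $v$ belong to different parts of the partition. Indeed, if only finitely many such transitions existed, then from some point on all terms $a_n$ would belong to a single subsequence, contradicting the infinitude of the other.

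Next, fix any such transition index $v$, and suppose for definiteness that $a_{v-1} = y_j \in (y_n)$ and $a_v = z_i \in (z_n)$ (the opposite case is symmetric). Since $j$ of the first $v$ terms came from $(y_n)$ and $i$ came from $(z_n)$ with $i+j=v$, the remaining tail satisfies $(a_n)_{n>v} = (y_p)_{p>j} \cup (z_q)_{q>i}$ as multisets, and hence
\[
r_v \;=\; \sum_{n>v} a_n \;=\; \sum_{p>j} y_p \,+\, \sum_{q>i} z_q \;=\; r_j^{(y)} + r_i^{(z)}.
\]
Now invoking fast convergence of both subsequences, $y_j > r_j^{(y)}$ and $z_i > r_i^{(z)}$, so
\[
a_{v-1} + a_v \;=\; y_j + z_i \;>\; r_j^{(y)} + r_i^{(z)} \;=\; r_v,
\]
which is exactly the desired inequality at the index $v$.

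Since there are infinitely many such transition indices, the inequality $a_{n-1}+a_n > r_n$ holds for infinitely many $n$. There is essentially no obstacle here beyond confirming the bookkeeping that the remaining tail at a transition decomposes cleanly into the tails of $(y_n)$ and $(z_n)$, which follows from nonincreasing monotonicity of the three sequences and the counting identity $i+j=v$ at a transition.
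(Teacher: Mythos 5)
Your proof is correct and follows exactly the same approach as the paper's: pick a transition index $v$ where consecutive terms come from different subsequences, decompose $r_v$ as $r_j^{(y)} + r_i^{(z)}$, and apply fast convergence to each summand. You've simply made explicit what the paper leaves implicit — that infinitely many such transition indices must exist and that the tail decomposition is valid — which is careful bookkeeping but not a different argument.
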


\begin{corollary}
Let $(a_n)$ be an interval-filling sequence. Then at most one of the conditions holds
\begin{itemize}
\item there exists a decomposition of $(a_n)$ into two interval-filling sequences;
\item there exists a decomposition of $(a_n)$ into two fast convergent sequences.
\end{itemize}
\end{corollary}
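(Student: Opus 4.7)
The plan is to observe that the Corollary is an immediate consequence of the two preceding theorems, since their conclusions are mutually incompatible asymptotic statements about the quantity $a_{n-1}+a_n$ versus the tail $r_n$. Concretely, the previous theorem gives that a decomposition into two interval-filling sequences forces $a_{n-1}+a_n \le r_n$ for all sufficiently large $n$, while the theorem right before the Corollary gives that a decomposition into two fast convergent sequences forces $a_{n-1}+a_n > r_n$ for infinitely many $n$. These conditions cannot hold simultaneously, so at most one of the two listed decompositions can exist.

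Therefore, the proof I would write is essentially a one-line contradiction argument. Suppose for contradiction that both decompositions exist. By Theorem \ref{nadwaprzedzialy} applied to the first decomposition, there is an index $k$ such that $a_{n-1}+a_n \le r_n$ for every $n \ge k$. On the other hand, by the preceding theorem applied to the second decomposition, the set $\{n : a_{n-1}+a_n > r_n\}$ is infinite, so in particular it meets $\{n : n \ge k\}$. Picking any such $n$ yields $a_{n-1}+a_n > r_n$ and $a_{n-1}+a_n \le r_n$ simultaneously, a contradiction.

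There is no real obstacle here: the work has already been done in the two preceding theorems, and the Corollary simply records that their hypotheses are incompatible. The only thing to be slightly careful about is that the two theorems are stated about the same kind of decomposition of the same sequence (a union of two infinite subsequences both of which are nonincreasing, and into which $(a_n)$ splits), so that the quantities $a_{n-1}$, $a_n$, and $r_n$ referred to in the two statements genuinely coincide and can be directly compared; this is clear from the setup at the start of the section.
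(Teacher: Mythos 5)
Your proof is correct and is exactly the intended argument: the Corollary is stated without a separate proof in the paper precisely because it follows immediately by combining Theorem \ref{nadwaprzedzialy} (eventual $a_{n-1}+a_n\le r_n$) with the theorem just before the Corollary (infinitely often $a_{n-1}+a_n>r_n$), and these conclusions are mutually exclusive.
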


\begin{corollary}
Let $(a_n)$ be an interval-filling sequence. If there exists a decomposition of $(a_n)$ into two interval-filling sequences then $2a_n\leq r_n$ for all large enough $n$. Generally, for $k\geq 2$ if there exists a decomposition of $(a_n)$ into $k$ interval-filling sequences, then $\sum_{i=n-k+1}^{n}a_i\leq r_n$ for all large enough $n$. In particular, $ka_n\leq r_n$ for all large enough $n$. 
\end{corollary}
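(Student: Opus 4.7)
The first part of the statement is immediate from Theorem \ref{nadwaprzedzialy}: since $(a_n)$ is nonincreasing, one gets $2a_n\le a_{n-1}+a_n\le r_n$ for all sufficiently large $n$. The substantive case is a general $k\ge 2$, and my plan is to upgrade the two-subsequence argument of Theorem \ref{nadwaprzedzialy} so that it handles all $k$ subsequences simultaneously.

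Fix a decomposition $(a_n)=(y^{(1)}_n)\cup\dots\cup(y^{(k)}_n)$ with each $(y^{(l)}_n)$ interval-filling, and let $r^{(l)}_{j}$ denote the $j$-th tail of $\sum y^{(l)}_n$. For $n$ large enough that every subsequence has already contributed at least one term at a position not exceeding $n$, I define, for each $l\in\{1,\dots,k\}$, the largest position $w_l\le n$ such that $a_{w_l}\in (y^{(l)}_n)$, and let $j_l$ be the index with $a_{w_l}=y^{(l)}_{j_l}$.

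The first step is to rewrite the tail of $\sum a_n$ past position $n$ as a sum of tails of the subsequences. By maximality of $w_l$, the positions strictly greater than $n$ that belong to the $l$-th subsequence are exactly those carrying $y^{(l)}_p$ with $p>j_l$, so
\[
r_n\ =\ \sum_{l=1}^k\sum_{p>j_l}y^{(l)}_p\ =\ \sum_{l=1}^k r^{(l)}_{j_l}\ \ge\ \sum_{l=1}^k y^{(l)}_{j_l}\ =\ \sum_{l=1}^k a_{w_l},
\]
where the inequality uses the interval-filling property of each $(y^{(l)}_n)$. The second step is a short pigeonhole bound on the $w_l$. Since the subsequences occupy disjoint positions in $(a_n)$, the numbers $w_1,\dots,w_k$ are $k$ distinct integers in $\{1,\dots,n\}$; reordering them in decreasing order as $w_{\sigma(1)}>\dots>w_{\sigma(k)}$ forces $w_{\sigma(l)}\le n-l+1$, and monotonicity of $(a_n)$ then gives
\[
\sum_{l=1}^k a_{w_l}\ =\ \sum_{l=1}^k a_{w_{\sigma(l)}}\ \ge\ \sum_{l=1}^k a_{n-l+1}\ =\ \sum_{i=n-k+1}^n a_i.
\]

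Concatenating the two chains yields the main inequality $\sum_{i=n-k+1}^n a_i\le r_n$, and one more appeal to monotonicity gives $ka_n\le r_n$. I do not anticipate any serious obstacle: the only step that needs a little care is the pigeonhole argument (which is short and works precisely because the subsequences occupy disjoint positions in $(a_n)$); the rest is routine bookkeeping extending the proof of Theorem \ref{nadwaprzedzialy}.
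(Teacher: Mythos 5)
Your proof is correct and takes essentially the same approach the paper intends: the corollary is left without an explicit proof, as a routine extension of Theorem \ref{nadwaprzedzialy}, and your argument is precisely that extension — the identification $r_n=\sum_{l=1}^k r^{(l)}_{j_l}$ is the $k$-subsequence analogue of the paper's $r_v=r_j^{(y)}+r_i^{(z)}$, the interval-filling inequality plays the same role, and your short pigeonhole step cleanly replaces the paper's observation that $w\le v-1$.
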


\begin{corollary}
Let $(a_n)$ be an interval-filling sequence. If there exists a decomposition such that both $(y_n)$ and $(z_n)$ are fast convergent then for infinitely many $n$ the inequality $2a_{n-1}> r_{n}$ holds.
\end{corollary}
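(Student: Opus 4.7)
The plan is to derive this corollary as an immediate consequence of the theorem just above it, combined with the standing assumption of the section that all sequences in consideration are nonincreasing. Recall that the preceding theorem gives: whenever $(a_n)=(y_n)\cup(z_n)$ with both $(y_n)$ and $(z_n)$ fast convergent, there are infinitely many indices $n$ for which $a_{n-1}+a_n>r_n$.

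Thus, given the hypothesis of the present corollary, I apply that theorem to extract an infinite set $N\subset\mathbb N$ of indices with $a_{n-1}+a_n>r_n$ for each $n\in N$. Because $(a_n)$ is nonincreasing, one has $a_{n-1}\ge a_n$ for every $n\ge 2$, whence
\[
2a_{n-1}\ \ge\ a_{n-1}+a_n\ >\ r_n
\]
for every $n\in N$. This is exactly the required inequality, holding for the same infinite set of indices.

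There is essentially no obstacle: the corollary is a one-line deduction from the previous theorem and the monotonicity convention declared at the beginning of the section. The only point that needs to be named explicitly in the write-up is the use of $a_{n-1}\ge a_n$, since the conclusion $2a_{n-1}>r_n$ is strictly weaker than, but structurally parallel to, the conclusion $a_{n-1}+a_n>r_n$ of the theorem it rests on.
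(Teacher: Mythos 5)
Your proof is correct and is exactly the intended one-line deduction: apply the preceding theorem to get $a_{n-1}+a_n>r_n$ for infinitely many $n$, then use the section's standing monotonicity assumption $a_{n-1}\ge a_n$ to conclude $2a_{n-1}\ge a_{n-1}+a_n>r_n$. The paper states this as an unproved corollary, and your argument matches the evident intent.
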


So far we considered the necessary conditions for decompositions into interval-filling or fast convergent sequences. Now, we give some sufficient condition to obtain the particular alternating decompositions. We will use the notation $r_n^{(2)}$ for the subtail $a_{n+1}+a_{n+3}+a_{n+5}+\ldots$.

\begin{theorem}
Let $(a_n)$ be an interval-filling sequence. If $2a_{n-1}\leq r_{n}$ for all $n\geq 2$, then there exists a decomposition of $(a_n)$ into two interval-filling sequences.
\begin{proof}
We will show that $(y_n)=(a_{2n-1})$ and $(z_n)=(a_{2n})$ is the proper decomposition. Indeed,
$$2a_{n-1}\leq r_n = r_{n+1}^{(2)}+r_{n}^{(2)}\leq 2r_{n}^{(2)},$$
which means that $a_{n-1}\leq r_{n}^{(2)}$ for all $n$. The inequality for even $n$'s is equivalent to $y_k\leq r_{k}^{y}$ for all $k$, while the case of odd $n$'s gives the inequality $z_k\leq r_{k}^{z}$. Hence both sequences $(y_n)$ and $(z_n)$ are interval-filling.
\end{proof}
\end{theorem}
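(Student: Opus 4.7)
The plan is to show that the most natural splitting works, namely the interleaved decomposition $(y_n) := (a_{2n-1})$ and $(z_n) := (a_{2n})$. To verify that each of these is interval-filling, I need to show $y_k \leq r_k^{(y)}$ and $z_k \leq r_k^{(z)}$ for every $k$, which in the original indexing translate to
\[
a_{2k-1}\ \leq\ \sum_{j\geq k+1} a_{2j-1} \qquad\text{and}\qquad a_{2k}\ \leq\ \sum_{j\geq k+1}a_{2j}.
\]
Both can be rewritten uniformly as the single inequality $a_{n-1}\leq r_n^{(2)}$, to be verified for all $n\geq 2$ (the case $n=2k$ corresponds to $(y_n)$, and the case $n=2k+1$ to $(z_n)$, after identifying the appropriate subtails).

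The main bridge between the hypothesis $2a_{n-1}\leq r_n$ and the inequality $a_{n-1}\leq r_n^{(2)}$ is the elementary identity
\[
r_n\ =\ r_n^{(2)}\,+\,r_{n+1}^{(2)},
\]
which simply splits the tail of $(a_n)$ into its odd-indexed and even-indexed parts (relative to $n$). Because $(a_n)$ is nonincreasing, the term-by-term comparison $a_{n+2j}\leq a_{n+2j-1}$ gives $r_{n+1}^{(2)}\leq r_n^{(2)}$, and hence $r_n\leq 2r_n^{(2)}$. Combining with the hypothesis,
\[
2a_{n-1}\ \leq\ r_n\ \leq\ 2r_n^{(2)},
\]
which yields $a_{n-1}\leq r_n^{(2)}$ for every $n\geq 2$, as desired.

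Specialising this inequality to even $n=2k$ gives $a_{2k-1}\leq r_{2k}^{(2)} = a_{2k+1}+a_{2k+3}+\ldots = r_k^{(y)}$, so $(y_n)$ is interval-filling; specialising to odd $n=2k+1$ gives $a_{2k}\leq r_{2k+1}^{(2)} = a_{2k+2}+a_{2k+4}+\ldots = r_k^{(z)}$, so $(z_n)$ is interval-filling. There is no real obstacle here beyond bookkeeping; the only point requiring care is to not confuse the subtail $r_n^{(2)}$ (which starts at index $n+1$ and steps by two) with the tail $r_n$, and to correctly match parities when passing between the indexing of $(a_n)$ and that of $(y_n)$, $(z_n)$.
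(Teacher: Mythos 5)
Your proposal is correct and follows essentially the same route as the paper: same interleaved decomposition $(y_n)=(a_{2n-1})$, $(z_n)=(a_{2n})$, the same identity $r_n=r_n^{(2)}+r_{n+1}^{(2)}\leq 2r_n^{(2)}$ via monotonicity, and the same parity specialization to conclude. Your version simply spells out the index bookkeeping that the paper leaves implicit.
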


In a similar way we obtain $k$ alternating interval-filling subsequences $(a_{kn+i})$.

\begin{corollary}
Let $(a_n)$ be an interval-filling sequence and $k\in\mathbb{N}$. If $ka_{n-k+1}\leq r_{n}$ for all $n\geq k$, then there exists a decomposition of $(a_n)$ into $k$ interval-filling sequences.
\end{corollary}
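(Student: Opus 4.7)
The plan is to generalize the preceding theorem (the $k=2$ case) by using a $k$-fold interleaving decomposition. For each $i\in\{1,2,\ldots,k\}$ I would set
$$
(y^{(i)}_j)_{j\in\N}\ :=\ (a_{(j-1)k+i})_{j\in\N},
$$
so that $(a_n)$ is the union of the $k$ sequences $(y^{(1)}_n),\ldots,(y^{(k)}_n)$. The task then reduces to proving, for each $i$ and each $j\ge 1$, the interval-filling inequality $y^{(i)}_j\le r_j^{(y^{(i)})}$.

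The key move would be to pass to the index $n:=jk+i-1$, which lies in $\{k,k+1,\ldots\}$ for every admissible pair $(i,j)$ and satisfies $y^{(i)}_j=a_{n-k+1}$ and
$$
r_j^{(y^{(i)})}\ =\ \sum_{l>j} a_{(l-1)k+i}\ =\ \sum_{l=0}^\infty a_{n+1+lk}.
$$
Now I would partition the full tail by residue class modulo $k$:
$$
r_n\ =\ \sum_{s=0}^{k-1}\ \sum_{l=0}^\infty a_{n+1+s+lk}.
$$
Monotonicity of $(a_n)$ gives $a_{n+1+lk}\ge a_{n+1+s+lk}$ for every $s,l\ge 0$, so the $s=0$ inner sum is the largest among the $k$, whence $r_n\le k\sum_{l=0}^\infty a_{n+1+lk}$. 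Combining with the hypothesis $ka_{n-k+1}\le r_n$ yields
$$
y^{(i)}_j\ =\ a_{n-k+1}\ \le\ \frac{r_n}{k}\ \le\ \sum_{l=0}^\infty a_{n+1+lk}\ =\ r_j^{(y^{(i)})},
$$
as desired.

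There is no serious obstacle here: the argument is structurally identical to the $k=2$ case proved just above, with the subtail $r_n^{(2)}$ replaced by its $k$-fold analogue (every $k$-th term) and the estimate $r_n\le 2r_n^{(2)}$ replaced by $r_n\le k\sum_{l=0}^\infty a_{n+1+lk}$. The only care needed is bookkeeping, namely checking that the bijection $(i,j)\leftrightarrow n=jk+i-1$ indeed hits exactly the range $n\ge k$ on which the hypothesis is assumed, and that each $(y^{(i)})$ is nonincreasing (which is immediate from the monotonicity of $(a_n)$).
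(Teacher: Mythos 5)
Your proof is correct and is exactly the ``similar way'' the paper alludes to after the $k=2$ theorem: interleave $(a_n)$ by residue class mod $k$, split $r_n$ into the $k$ subtails $\sum_{l\ge0}a_{n+1+s+lk}$ ($s=0,\ldots,k-1$), use monotonicity to bound $r_n\le k\sum_{l\ge0}a_{n+1+lk}$, and feed in the hypothesis via the change of index $n=jk+i-1$. The bookkeeping (the bijection hitting exactly $n\ge k$, and monotonicity of each subsequence) checks out, so there is nothing to add.
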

We also have the following theorem.
\begin{theorem}\label{2k-1xn}
Let $(a_n)$ be an interval-filling sequence.
%If $3a_{n}\leq r_{n}$ for all $n$ then there exists a decomposition of $(a_n)$ into two interval-filling sequences. In general
Assume that for $k \in \N$, $(2k-1)a_{n}\leq r_{n}$ for all $n$. Then, there exists a decomposition of $(a_n)$ into $k$ interval-filling sequences.
\end{theorem}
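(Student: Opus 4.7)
The plan is to reduce this theorem to the Corollary immediately preceding it, which already yields a decomposition into $k$ interval-filling sequences as soon as one knows that $ka_{n-k+1}\leq r_n$ holds for all $n\geq k$. So the only thing I would need to verify is that the hypothesis $(2k-1)a_n\leq r_n$ (for all $n$) forces this apparently weaker inequality.

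To execute the reduction, I would fix $n\geq k$ and apply the hypothesis at the index $n-k+1$, obtaining $(2k-1)a_{n-k+1}\leq r_{n-k+1}$. Then I would split the tail as
$$
r_{n-k+1}\ =\ a_{n-k+2}+a_{n-k+3}+\ldots+a_n+r_n,
$$
and bound each of the $k-1$ initial summands by $a_{n-k+1}$, using that the sequence $(a_n)$ is nonincreasing. This produces $r_{n-k+1}\leq (k-1)a_{n-k+1}+r_n$. Substituting into the previous inequality and cancelling $(k-1)a_{n-k+1}$ on both sides gives exactly $ka_{n-k+1}\leq r_n$.

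Once this inequality is established for every $n\geq k$, the preceding Corollary supplies the desired decomposition of $(a_n)$ into $k$ interval-filling subsequences (concretely, the alternating decomposition $(a_{kn+i})_n$ for $i\in\{1,2,\ldots,k\}$ already does the job). There is essentially no obstacle here: the whole argument is a single monotonicity estimate followed by an appeal to a result already available earlier in the section; I need only take a little care that $n-k+1\geq 1$, which is automatic from $n\geq k$.
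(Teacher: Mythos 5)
Your proof is correct and rests on essentially the same key estimate as the paper's, namely the monotonicity bound $a_{n-k+2}+\cdots+a_n\leq(k-1)a_{n-k+1}$. The only difference is organizational: you cleanly reduce to the preceding Corollary by deriving $ka_{n-k+1}\leq r_n$, whereas the paper's proof re-derives the Corollary's content in-line by directly establishing $a_n\leq r_{n+k-1}^{(k)}$ (after the same cancellation of $(k-1)a_n$ and a subtail splitting) before exhibiting the alternating subsequences.
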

\begin{proof}
Since for all $n \in \N$
$$(k-1)a_n \geq a_{n+1} +  \ldots + a_{n+k-1}$$
and for all $k \leq i \leq 2k-2$
$$r_{n+i}^{(k)} \leq r_{n+k-1}^{(k)},$$
we get 
$$ka_n = (2k-1)a_n - (k-1)a_n \leq r_n-(k-1)a_n =(a_{n+1}+a_{n+2} +  \ldots + a_{n+k-1}) + (r_{n+k-1}^{(k)} + r_{n+k}^{(k)} +  \ldots r_{n+2k-2}^{(k)}) - (k-1)a_n $$$$\leq (k-1)a_n + kr_{n+k-1}^{(k)} - (k-1)a_n = kr_{n+k-1}^{(k)}.$$
So, we decompose $(a_n)$ into subsequences of the form $(a_{kn-j})_n$, where $j \in \{0,1,  \ldots, k-1\}$, and then
$$a_{kn-j} \leq r_{kn-j+k-1}^{(k)} = a_{kn-j+k} + a_{kn-j+2k}+ \ldots.$$
Hence subsequences $(a_{kn-j})_n$ are slowly convergent for all $j \in \{0,1,  \ldots, k-1\}$, and thus they are interval-filling.

\end{proof}
From the above theorem we get that if $3a_{n}\leq r_{n}$ for all $n$ then there exists a decomposition of $(a_n)$ into two interval-filling sequences. However, we can improve this result.
\begin{theorem}
If $(1+\sqrt{3})a_{n}\leq r_{n}$ for all $n$ then there exists a decomposition of $(a_n)$ into two interval-filling sequences.
\end{theorem}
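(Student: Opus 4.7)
The plan is to prove that the same alternating decomposition used in the previous theorem, $(y_n) := (a_{2n-1})$ and $(z_n) := (a_{2n})$, already works under the weaker hypothesis $(1+\sqrt{3})a_n \leq r_n$. Unpacking the interval-filling conditions for $(y_n)$ and $(z_n)$ reduces, as in the paper's proof of the preceding theorem, to the single inequality $a_{n-1} \leq r_n^{(2)}$ for every $n \geq 2$; my entire task is to verify this.

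I would argue by contradiction: suppose $a_m > r_{m+1}^{(2)}$ for some index $m$. Splitting the tail $r_m$ according to the parity of the offset gives the identity $r_m = a_{m+1} + r_{m+1}^{(2)} + r_{m+2}^{(2)}$, and termwise monotonicity immediately yields $r_{m+2}^{(2)} \leq r_{m+1}^{(2)}$, so that $r_m \leq a_{m+1} + 2\,r_{m+1}^{(2)}$. Inserting this estimate into the hypothesis at index $m$ together with the contrary assumption $r_{m+1}^{(2)} < a_m$ gives
$$
(1+\sqrt{3})\,a_m \;\leq\; r_m \;<\; a_{m+1} + 2\,a_m,
$$
and hence $a_{m+1} > (\sqrt{3}-1)\,a_m$.

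The contradiction will then emerge from the hypothesis applied at index $m+1$. Since $r_{m+1} = r_m - a_{m+1} \leq 2\,r_{m+1}^{(2)}$, one obtains $(1+\sqrt{3})\,a_{m+1} \leq 2\,r_{m+1}^{(2)}$, and using the identity $2/(1+\sqrt{3}) = \sqrt{3}-1$ this rewrites as $a_{m+1} \leq (\sqrt{3}-1)\,r_{m+1}^{(2)} < (\sqrt{3}-1)\,a_m$, contradicting the bound just obtained.

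The step I expect to require the most care is realizing that the hypothesis must be applied at two consecutive indices $m$ and $m+1$ so that the ratio $a_{m+1}/a_m$ gets trapped between $\sqrt{3}-1$ and itself; this pairing is precisely what pins down $1+\sqrt{3}$ as the threshold for the plain alternating scheme. Beyond it no adaptive or non-periodic partition appears necessary, so the proof should be appreciably shorter than those of the preceding results in this section.
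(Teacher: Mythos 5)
Your argument is correct and uses essentially the same approach as the paper: the alternating decomposition $(a_{2n-1})$, $(a_{2n})$, the reduction to $a_n \leq r_{n+1}^{(2)}$, the splitting $r_m = a_{m+1} + r_{m+1}^{(2)} + r_{m+2}^{(2)}$, and applying the hypothesis at two consecutive indices. The only difference is cosmetic: the paper strings the same inequalities into a single direct chain ending with $(1+\sqrt3)a_n \le (1+\sqrt3)r_{n+1}^{(2)}$, whereas you repackage them as a contradiction; the underlying algebra is identical.
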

\begin{proof}
For all $n \in \N$ we have
$$(1+\sqrt{3})a_{n}\leq r_{n} = r_n^{(2)} + r_{n+1}^{(2)} =a_{n+1} + r_{n+2}^{(2)} + r_{n+1}^{(2)} \leq 2r_{n+1}^{(2)} + a_{n+1} \leq 2r_{n+1}^{(2)}+\frac{1}{1+\sqrt{3}} r_{n+1} $$$$= 2r_{n+1}^{(2)}+\frac{1}{1+\sqrt{3}}r_{n+1}^{(2)} + \frac{1}{1+\sqrt{3}}r_{n+2}^{(2)} \leq 2r_{n+1}^{(2)}+\frac{2}{1+\sqrt{3}}r_{n+1}^{(2)}.$$
Dividing both sides by $1+\sqrt{3}$, we get
$$a_{n} \leq r_{n+1}^{(2)}.$$
Similarly, as in the proof of Theorem \ref{2k-1xn} we get that subsequences $(a_{2n})$ and $(a_{2n-1})$ are interval-filling.
\end{proof}

Note that there is no need to consider the case of decompositions of sequences into more than two fast-convergent sequences since any subsequence of such a sequence is also fast convergent. Thus, if there exists a decomposition of a sequence into two fast-convergent subsequences, then there exists a decomposition into any number of such sequences.

To sum up and simplify the problem note that the crucial in our consideration is the ratio $q_n:=\frac{a_n}{r_n}$. The class of interval-filling sequences are defined by the condition $q_n\leq 1$ for all $n$. The sufficient condition of having its decomposition into $k$ interval-filling subsequences is that the inequality $q_n\leq\frac{1}{2k-1}$ holds for all $n$. By the necessary condition we know that if the inequality $q_n>\frac{1}{k}$ holds for infinitely many $n$ then such a decomposition does not exist. To illustrate the problem we give some examples.

\begin{example}
Let $a_{2n-1}=a_{2n}=\frac{1}{2^n}$ for all $n$. Then the sequence has a decomposition into two interval-filling subsequences containing the odd and the even terms respectively. We have $q_{2n-1}=\frac{1}{3}$ and $q_{2n}=\frac{1}{2}$ for each $n$.
\end{example}

\begin{example}
Let $a_{n}=(\frac{\sqrt{2}}{2})^n$ for all $n$. Then the sequence has a decomposition into two interval-filling subsequences containing the odd and the even terms respectively (both are geometric with the ratio $\frac{1}{2}$). We have $q_{n}=\frac{1}{\sqrt{2}+1}$ for every $n$.
\end{example}

Now we consider the sufficient condition for having the alternating decomposition into two fast convergent subsequences.
\begin{theorem}
Let $(a_n)$ be an interval-filling sequence. If $a_n>r_{n+1}$ for all $n$ then there exists a decomposition of $(a_n)$ into two fast convergent sequences.
\begin{proof}
The following inequalities hold
$$a_n>r_{n+1}>r_{n+1}^{(2)}$$
Hence the sequences $(a_{2n-1})$ and $(a_{2n})$ are fast convergent.
\end{proof}
\end{theorem}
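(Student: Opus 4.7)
The plan is to try the simplest possible decomposition: split $(a_n)$ into its odd-indexed and even-indexed subsequences $(y_k) := (a_{2k-1})$ and $(z_k) := (a_{2k})$, and then verify that the hypothesis $a_n > r_{n+1}$ is strong enough to force each of these subsequences to be fast convergent.

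First I would unwind the definitions. Since $(y_k) \cup (z_k) = (a_n)$ and both subsequences are infinite, they clearly form a decomposition of $(a_n)$ in the sense defined in the introduction. Writing $r_j^{(y)}$ and $r_j^{(z)}$ for the tails of these subsequences, the fast convergence condition to check is $y_j > r_j^{(y)}$ and $z_j > r_j^{(z)}$ for every $j$, i.e.\ $a_{2j-1} > \sum_{k>j} a_{2k-1}$ and $a_{2j} > \sum_{k>j} a_{2k}$.

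Next, I would observe that in the notation introduced just before the theorem, the odd-subscript subtail $a_{n+1} + a_{n+3} + a_{n+5} + \cdots$ is exactly $r_n^{(2)}$, and it is dominated by the full tail: $r_n^{(2)} \le r_n$. The hypothesis $a_n > r_{n+1}$ for all $n$ therefore yields
\[
a_n \ > \ r_{n+1}\ \ge\ r_{n+1}^{(2)}.
\]
Specializing to $n = 2j-1$ gives $a_{2j-1} > r_{2j}^{(2)} = a_{2j+1} + a_{2j+3} + \cdots = r_j^{(y)}$, so $(y_k)$ is fast convergent; specializing to $n = 2j$ gives $a_{2j} > r_{2j+1}^{(2)} = a_{2j+2} + a_{2j+4} + \cdots = r_j^{(z)}$, so $(z_k)$ is fast convergent as well.

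There is essentially no obstacle here: the proof is a one-line bookkeeping check once the right decomposition is chosen. The only thing to be careful about is reconciling the two slightly different indexings (the tail $r_n^{(2)}$ as defined for $(a_n)$ versus the tails $r_j^{(y)}$ and $r_j^{(z)}$ of the subsequences), which amounts to noticing that the odd (resp.\ even) terms $a_{n+3}, a_{n+5}, \ldots$ starting past index $2j-1$ (resp.\ $2j$) are precisely the terms that make up $r_j^{(y)}$ (resp.\ $r_j^{(z)}$).
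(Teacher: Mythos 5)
Your proposal is correct and is essentially the same argument as the paper's: both split $(a_n)$ into its odd- and even-indexed subsequences and use the chain $a_n > r_{n+1} \ge r_{n+1}^{(2)}$ to conclude fast convergence of each half. You have merely spelled out the index bookkeeping that the paper leaves implicit.
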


In the studies of decompositions into two fast convergent sequences the major characteristic is the value of the ratio $p_n:=\frac{a_n}{r_{n+1}}$. Clearly, $p_n>q_n$. Note that if $p_n\leq 1$ holds for all $n$, then we get the definition of a locker, which is a stronger notion to the interval-filling sequence and was described in \cite{DJK} and \cite{DK}. If $p_n>1$ for all $n$, then we know that $(a_n)$ can be decomposed into two fast-convergent sequences. On the other hand, if $p_n\leq\frac{1}{2}$ for all large enough $n$ then we get that such decomposition does not appear.

We finish the consideration with the example which does not satisfy the sufficient condition, but for which the decomposition exists.

\begin{example}
Let $(c_n)$ be any decreasing sequence with elements from the interval $(\frac{1}{2},1)$. Let $a_{2n-1}=a_{2n}=\frac{c_n}{2^n}$.
Note that $(a_n)$ is interval-filling. Indeed, for $n \in \N$ we have $a_{2n-1}=a_{2n} < r_{2n-1}$ and 
$$a_{2n}=\frac{c_n}{2^n}<\frac{1}{2^n}=\sum_{k=1}^{\infty}\frac{1}{2^{n+k}}=\frac{1}{2}\sum_{k=1}^{\infty}\frac{1}{2^{n+k}} +\frac{1}{2}\sum_{k=1}^{\infty}\frac{1}{2^{n+k}}<\sum_{k=1}^{\infty}\frac{c_{n+k}}{2^{n+k}} +\sum_{k=1}^{\infty}\frac{c_{n+k}}{2^{n+k}}=r_{2n}.$$
Moreover, it can be decomposed into two fast convergent sequences $(a_{2n-1})$ and $(a_{2n})$ of equal terms, since
$$a_{2n}=\frac{c_n}{2^n}=\sum_{k=1}^{\infty}\frac{c_n}{2^{n+k}}>\sum_{k=1}^{\infty}\frac{c_{n+k}}{2^{n+k}}=r_{2n}^{(2)}$$
On the other hand, for the value of the ratios we get
$$p_{2n-1}=\frac{a_{2n-1}}{r_{2n}}=\frac{c_{n}}{2^{n}}\cdot\frac{1}{2\sum_{k=1}^{\infty}\frac{c_{n+k}}{2^{n+k}}}<\frac{c_n}{2^n}\cdot\frac{1}{2\sum_{k=1}^{\infty}\frac{c_{n}}{2^{n+k}}}=\frac{1}{2}<1.$$
\end{example}

Now, we give the example of an interval-filling sequence such that none of its decompositions contains two interval-filling or two fast convergent sequences.

\begin{example}
We will find a sequence which cannot be decomposed neither into two interval filling subsequences, nor into two fast convergent subsequences. Let $(a_n)$ be defined as follows: $a_{3n-2}=a_{3n-1}=q^{2n-1}$, $a_{3n}=q^{2n}$ for each $n$, that is $(a_n)$ is the multigeometric sequence $(a_n)=\frac{1}{q}(1,1,q;q^2)$. We will now find a proper $q \in (0,1)$. First, we need to break the necessary condition for decomposition into two interval-filling sequences given in Theorem \ref{nadwaprzedzialy}. Several inequalities should be satisfied:
\begin{itemize}
\item $a_{3n-2}+a_{3n-1}> r_{3n-1}\Leftrightarrow 2q^{2n-1}> \frac{q^{2n}+2q^{2n+1}}{1-q^2}\Leftrightarrow 4q^2+q-2< 0\Leftrightarrow q< \frac{1}{8}(\sqrt{33}-1)\approx 0,593$
\item $a_{3n-1}+a_{3n}> r_{3n}\Leftrightarrow q^{2n-1}+q^{2n}> \frac{2q^{2n+1}+q^{2n+2}}{1-q^2}\Leftrightarrow 2q^3+3q^2-q-1< 0\Leftrightarrow q< \frac{1}{2}(\sqrt{5}-1)\approx 0,618$
\item $a_{3n}+a_{3n+1}> r_{3n+1}\Leftrightarrow q^{2n}+q^{2n+1}>q^{2n+1}+ \frac{q^{2n+2}+2q^{2n+3}}{1-q^2}\Leftrightarrow 2q^3+2q^2-1< 0\Leftrightarrow \\q< \frac{1}{6}(\sqrt[3]{46-6\sqrt{57}}+\sqrt[3]{46+6\sqrt{57}}-2)\approx 0,565$
\end{itemize}
Hence for $q$ satisfying the above three inequalities (the third of them determines the upper boundary) we know that $(a_n)$ has no decomposition into two interval-filling sequences.
\\Suppose that $(a_n)$ has a decomposition into $(y_n)$ and $(z_n)$, where both of them are fast convergent. Since some of the terms in $(a_n)$ repeats two times, we need to divide it between our two sequences. Hence for every $n$ exactly one of the terms $a_{3n-2}$ and $a_{3n-1}$ belongs to $(y_n)$ and the other one to $(z_n)$. Thus, $(y_n)\supset (q^{2n-1})$ and $(z_n)\supset (q^{2n-1})$. Note that $q^2$ need to belong to one of them, let us assume that $q^2\in(y_n)$.
 \\Now, let us consider for which $q$ the sequence $(y_n)$ is not fast convergent.
$$y_1=q\leq q^2+\sum_{k=1}^{\infty}q^{2k+1}\leq r_1^y\Leftrightarrow q\leq q^2+\frac{q^3}{1-q^2}\Leftrightarrow q\geq q_0\approx 0.555.$$
Hence we have obtained the interval to choose the proper $q$, in particular $q=0,56$. Then, the sequence $(a_n)$ has no decomposition into two fast convergent sequences.

Note that the example shows even more. Because of its self similar multigeometric structure, we can not decompose $(a_n)$ into two sequences both of which satisfy either slow or fast convergence condition for large enough indexes.
\end{example}

\begin{problem}
Characterize the interval-filling sequences $(a_n)$ for which the decomposition into two or more interval-filling sequences or into two fast convergent sequences is possible in terms of sequences $(p_n)$ and $(q_n)$ respectively.
\end{problem}

%\begin{problem}
%We know that the interval-filling sequence can not have two considered kinds of decomposition in the same time. However we may ask if there exists the sequence which has no decomposition into interval-filling sequences neither into fast convergent sequences.
%\end{problem}


\begin{thebibliography}{a,b,c,d}
\bibitem{AI} R. Anisca, M. Ilie, \emph{A technique of studying sums of
central Cantor sets}, Canad. Math. Bull. \textbf{44} (2001), 12--18.

\bibitem{BBFS} T. Banakh, A. Bartoszewicz, M. Filipczak, E. Szymonik, \textit{Topological and measure properties of some self-similar sets}, Topol. Methods Nonlinear Anal. \textbf{46}(2) (2015), 1013--1028.

\bibitem{B} M. Banakiewicz, \textit{The Lebesgue measure of some M-cantorval}, J. Math. Anal. Appl. \textbf{471} (2019), 170--179.

\bibitem{BP} M. Banakiewicz, F. Prus-Wi\'sniowski, \textit{M-Cantorvals of Ferens type}, Math. Slovaca \textbf{67}(4) (2017), 1--12.

\bibitem{recover} A. Bartoszewicz, S. G\l \c{a}b, J. Marchwicki, \textit{Recovering a purely atomic finite measure from its range}, J. Math. Anal. Appl. \textbf{467} (2018), 825-841.

%\bibitem{BBGS} T. Banakh, A. Bartoszewicz, S. G\l \c{a}b, E. Szymonik, \textit{Algebraic and topological properties of some sets in $\ell_1$}, Colloq. Math. 129 (2012), 75--85.
%\bibitem{BFGSW} A. Bartoszewicz, M. Filipczak, S. G\l \c{a}b, J. Swaczyna, F. Prus-Wi\'sniowski, \textit{On generating regular Cantorvals connected with geometric Cantor sets}, arXiv:1706.03523v1.
\bibitem{BFPW1} A. Bartoszewicz, M. Filipczak, F. Prus-Wi\'sniowski, \textit{Topological and algebraic aspects of subsums of series}, Traditional and present-day topics in real analysis, Faculty of Mathematics and Computer Science, University of \L \'od\'z, \L \'od\'z, 2013, 345--366.

\bibitem{BFPW2} A. Bartoszewicz, M. Filipczak, F. Prus-Wi\'sniowski, \textit{Semi-fast convergent sequences and $k$-sums of central Cantor sets}, Eur. J. Math. \textbf{6} (2020), 1523-1536.

%\bibitem{BFS} A. Bartoszewicz, M. Filipczak, E. Szymonik, \textit{Multigeometric sequences and Cantorvals}, Cent. Eur. J. Math. 12(7) (2014), 1000--1007.

\bibitem{BPW} W. Bielas, S. Plewik, M. Walczy\'nska, \textit{On the center of distances}, Eur. J. Math. \textbf{4} (2018), no. 2, 687--698.

\bibitem{DJK} Z. Dar\'oczy, A. J\'arai, I. Kat\'ai, \textit{Intervallfullende Folgen und volladditive Funktionen}, Acta Sci. Math. \textbf{50} (1986), 337-350.

\bibitem{DK} Z. Dar\'oczy, I. Kat\'ai, \textit{Interval filling sequences and additive functions}, Acta Sci. Math. \textbf{52} (1988), 337-347.

%\bibitem{DKS} Z. Dar\'oczy, I. Kat\'ai, T. Szab\'o, \textit{On completely additive functions related to interval-filling sequences}, Arch. Math., \textbf{54} (1990), 173-179.
%\bibitem{Erdos} P. Erd\"os, M. Horv\'ath, I. Jo\'o, \textit{On the uniqueness of the expansions $1=\sum q^{-n_i}$}, Acta Math. Hung. 58 (3-4) (1991), 333--342.
\bibitem{E07}
K.I. Ero\u{g}lu, \textit{On the arithmetic sum of Cantor sets}, Nonlinearity \textbf{20} (2007), 1145--1161.

\bibitem{F} C. Ferens, \textit{On the range of purely atomic probability
measures}, Studia Math. \textbf{77}(3) (1984), 261--263.

\bibitem{FN23}
T. Filipczak, P. Nowakowski, \textit{Conditions for the difference set of a
central cantor set to be a Cantorval}, Results in Math. \textbf{78} (2023), article no. 166. %https://doi.org/10.1007/s00025-023-01940-4

\bibitem{Foran}
J. Foran, \textit{Fundamentals of Real Analysis}, Pure and Applied Mathematics, vol. 144, Marcel Dekker, Inc., New York, Basel, Hongkong 1991.

\bibitem{GN88}
Guthrie, J.A., Nymann, J.E., \textit{The topological structure of the set of subsums of an infinite series}, Colloq. Math. \textbf{55} (1988), 323--327.
 
\bibitem{cardfun} S. G\l\c{a}b, J. Marchwicki, \textit{Cardinal functions of atomic measures}, Results in Math. \textbf{75} (2020), article no. 141. %DOI:10.1007/s00025-020-01260-x 

\bibitem{GM23} S. G\l\c{a}b, J. Marchwicki, \textit{Set of uniqueness for Cantorvals}, Results in Math. \textbf{78} (2023) article no. 9.

%\bibitem{cardfunCantorval} S. G\l\c{a}b, J. Marchwicki, \textit{Set of uniqueness for Cantorvals} (b\c{e}dzie na arxivie nied\l ugo)
%\bibitem{GN} J.A. Guthrie, J.E. Nymann, \textit{The topological structure of the set of subsums of an infinite series}, Colloq. Math. 55:2 (1988), 323--327.
\bibitem{Jones} R. Jones, \textit{Achievement sets of sequences}, Am. Math. Mon. \textbf{118}(6) (2011), 508--521.

\bibitem{Kakeya} S. Kakeya, \textit{On the partial sums of an infinite series}, T\^{o}hoku c. Rep. \textbf{3} (1914), 159--164.

\bibitem{MM}
J. Marchwicki, J. Miska, \textit{On Kakeya conditions for achievement sets}, Result in Math. \textbf{76} (2021) article no. 181.

\bibitem{MO} P. Mendes, F. Oliveira, \textit{On the topological structure of the arithmetic sum of two Cantor sets}, Nonlinearity \textbf{7} (1994), 329--343.

\bibitem{MY01} C. Moreira, J. Yoccoz, \textit{Stable intersections of Cantor sets with large Hausdorff dimension}, Ann. of Math. \textbf{154} (2001), 45--96.

\bibitem{Now2} P. Nowakowski, \emph{Conditions
for the difference set of a central Cantor set to be a Cantorval. Part II.}, arXiv:2307.08102.

\bibitem{Now1} P. Nowakowski, \emph{When the algebraic difference of two central Cantor sets is an interval?}, Ann. Fenn. Math. \textbf{48} (2023), 163--185.

\bibitem{Nymannlin} J. E. Nymann, \textit{Linear combinations of Cantor sets}, Colloq. Math. \textbf{68} (1995), 259--264.

%\bibitem{Nymann} J. E. Nymann, \textit{The sum of the Cantor set with itself}, L'Enseignement Math\'ematique, t. 39 (1993), p. 177-178 
\bibitem{NS0} J.E. Nymann, R.A. S\'{a}enz, \textit{On the paper of Guthrie and Nymann on subsums of infinite series}, Colloq. Math. \textbf{83} (2000), 1--4.

\bibitem{Palis87} J. Palis, \textit{Homoclinic orbits, hyperbolic dynamics and dimensions of Cantor sets}, Contemp. Math. \textbf{53} (1987), 203--216.

\bibitem{PY97} J. Palis, J. C. Yoccoz, \textit{On the arithmetic sum of regular Cantor sets}, Ann. Inst. Henri Poincar\'{e} \textbf{14}(4) (1997), 439--456.

\bibitem{Pourba18}
M. Pourbarat, \textit{On the arithmetic difference of middle Cantor sets}, Discrete Contin. Dyn. Syst. \textbf{38}(9) (2018) 4259--4278.

\bibitem{PWT18} F. Prus-Wi\'{s}niowski, F. Tulone,\textit{The arithmetic decomposition of central Cantor sets}, J. Math. Anal. Appl. \textbf{467} (2018), 26--31.

\bibitem{Sannami92} A. Sannami, \textit{An example of a regular Cantor set whose difference is a Cantor set with positive Lebesgue measure}, Hokkaido Math. J. \textbf{21} (1992), 7--24.

\bibitem{T19}
Y. Takahashi, \textit{Sums of two self-similar Cantor sets}, J. Math. Anal. Appl. \textbf{477} (2019), 613--626.

\bibitem{VMPS19}
Vinishin Y., Markitan V., Pratsiovytyi M., Savchenko I., \textit{Positive series, whose sets of subsums are Cantorvals}, Proc. International Geom. Center. \textbf{12}(2) (2019), 26--42 (in Ukrainian).
 
%https://doi.org/10.15673/tmgc.v12i2.1455
\bibitem{WS} A.D. Weinstein, B.E. Shapiro, \textit{On the structure of a set of $\overline{\alpha}$-representable numbers}, Izv. Vys\v{s}. U\v{c}ebn. Zaved. Matematika \textbf{24} (1980), 8--11.
%\bibitem{PW} F. Prus-Wi\'sniowski, \textit{Beyond the sets of subsums}, preprints of the Faculty of Mathematics and Informatics, Lodz University (2013).
\end{thebibliography}
\end{document}